\documentclass[11pt]{article}
\usepackage{amsmath, amssymb, amsfonts, amsxtra, latexsym, amscd, eucal, amsthm, graphicx, epsfig, color, enumerate}      % use if color is used in text
\usepackage{tikz}
\usepackage{subcaption}

\usepackage{hyperref}   % use for hypertext links, including those to external documents and URLs
\theoremstyle{theorem}
\newtheorem{Def}{Definition}[section]

\newtheorem{Lem}[Def]{Lemma}
\newtheorem{Thm}[Def]{Theorem}
\newtheorem{Cor}[Def]{Corollary}
\theoremstyle{definition}

\newtheorem{Rem}[Def]{Remark}

\newcommand{\bR}{\mathbb{R}}

\newcommand{\mf}{\mathcal{F}}

\newcommand{\bN}{\mathbb{N}}

\newcommand{\pr}{\mathbb{P}}

\newcommand{\Om}{\Omega}

\usepackage[top=2cm, bottom=2cm, left=2cm, right=2cm]{geometry}

\allowdisplaybreaks

\begin{document}

\title{Strong convergence rate of Euler-Maruyama approximations in temporal-spatial H\"older-norms for L\'evy-driven stochastic differential equations}
\author{ Vu Thi Hue\footnote{Faculty of Mathematics and Informatics, Hanoi University of Science and Technology, 1 Dai Co Viet, Bach Mai, Hanoi, Vietnam. Email: hue.vuthi@hust.edu.vn} \quad 
Ngoc Khue Tran\footnote{Corresponding author. Faculty of Mathematics and Informatics, Hanoi University of Science and Technology, 1 Dai Co Viet, Bach Mai, Hanoi, Vietnam. Email: khue.tranngoc@hust.edu.vn} \quad 
Hoang-Long Ngo\footnote{   Department of Mathematics and Informatics, School of Mathematics and Computer Science, Hanoi  National  University  of  Education, 136  Xuan  Thuy, Cau  Giay, Hanoi, Vietnam. Email: ngolong@hnue.edu.vn}}  
\maketitle

\begin{abstract} 
We study the error between the exact solution and its Euler-Maruyama approximation in temporal-spatial H\"older-norms for L\'evy-driven stochastic differential equations. 
\end{abstract} 

\textbf{Keywords} Euler-Maruyama approximation $\cdot$  L\'evy process  $\cdot$ Stochastic differential equation with jumps   $\cdot$ Temporal-spatial H\"older-norms

\textbf{Mathematics Subject Classification:} 60H35,  60H10

\section{Introduction} 

On a complete probability space $(\Om, \mf, \pr)$, we consider the process $X=(X_t)_{t\geq 0}$ solution to the following one-dimensional stochastic differential equation (SDE) with jumps
\begin{equation} \label{eqn1}
dX_t=\mu(X_t)dt+\sigma(X_t)dW_t + \gamma \left(X_{t-}\right) dZ_t, 
\end{equation}
where $W=(W_t)_{t\geq 0}$ is a standard Brownian motion, and $Z=(Z_t)_{t\geq 0}$ is a centered pure jump L\'evy process whose L\'evy measure $\nu$ satisfies $\int_{\mathbb{R}}\min\{1, z^2\} \nu(dz)<+\infty$. Two processes $W$ and $Z$ are assumed to be independent.  The natural filtration $(\mf_t)_{t\geq 0}$ is generated by two processes $W$ and $Z$ and $X_0$ is $\mathcal{F}_0$-measurable. Let $\mathcal{B}(\mathbb{R}_{+}\times \mathbb{R}_0)$ be the Borel $\sigma$-algebra on $\mathbb{R}_{+}\times \mathbb{R}_0$, where $\mathbb{R}_0:=\mathbb{R}\setminus\{0\}$.  The L\'evy-It\^o decomposition of $Z$ is defined by
$$
Z_t=\int_{0}^{t}\int_{\mathbb{R}_0}z(N(ds,dz)-\nu(dz)ds), 
$$
for any $t\geq 0$, where $N(dt,dz)$ is a Poisson random measure on the measurable space $(\mathbb{R}_{+}\times \mathbb{R}_0,\mathcal{B}(\mathbb{R}_{+}\times \mathbb{R}_0))$  associated  with the intensity measure $\nu(dz)dt$. That is, 
$$
N([0,t] \times A):=\sum_{0\leq u\leq t}{\bf 1}_{\{\Delta Z_u \in A \}},
\text{ for any } t > 0, \text{ and  } A \in \mathcal{B}(\mathbb{R}_0).$$
Here, the jump size of $Z$ at instant $u$ is defined as $\Delta Z_u:=Z_u-Z_{u-}:=Z_u-\lim_{r \uparrow u}Z_{r}$ for any $u>0$, $\Delta Z_0:=0$.
The compensated Poisson random measure associated with $N(dt,dz)$ is denoted by $\widetilde{N}(dt,dz):=N(dt,dz)-\nu(dz)dt$.

Since stochastic differential equations of the form \eqref{eqn1} rarely admit explicit or directly simulatable solutions, their practical implementation requires the construction of suitable numerical approximations on discrete time grids (see \cite{KP92}). Among the available methods, the Euler-Maruyama scheme is one of the most widely used approaches. To introduce this scheme, let \(\mathbb{S}\) denote the collection of all discrete partitions of the interval \([0,T]\), namely,
\begin{align*}
 \mathbb{S}=\left \{
\delta: [0, T]\to [0, T]: 
\begin{aligned}
&\exists\, n\in\bN,\ t_0,t_1,\dots,t_n\in [0,T] \text{ such that } 0=t_0<t_1<\cdots<t_n=T, \\
&\delta([t_0,t_1])=\{t_0\},\ \delta((t_1,t_2])=\{t_1\},\ \ldots,\ 
\delta((t_{n-1},t_n])=\{t_{n-1}\}
\end{aligned}
\right\}.
\end{align*}
For every \(\delta\in\mathbb{S}\), \(s\in[0,T]\), and \(x\in\mathbb{R}\), we define the Euler-Maruyama approximation associated with the grid \(\delta\), denoted by
\[
(X^{\delta,x}_{s,t})_{t\in[s,T]}:[s,T]\times\Omega\to\mathbb{R},
\]
as the \((\mathcal{F}_t)_{t\in[s,T]}\)-adapted c\`adl\`ag process satisfying, for all \(t\in[s,T]\), almost surely,
\begin{align}
\label{y01}
 X^{\delta,x}_{s,t}
&=
x+\int_{s}^{t}\mu\bigl(X^{\delta,x}_{s,\max\{s,\delta(r)\}}\bigr)\,dr
+\int_{s}^{t}\sigma\bigl(X^{\delta,x}_{s,\max\{s,\delta(r)\}}\bigr)\,dW_r
\notag\\
&\quad
+\int_{s}^{t}\gamma\bigl(X^{\delta,x}_{s,\max\{s,\delta(r-)\}}\bigr)\,dZ_r
\notag\\
&=
x+\int_{s}^{t}\mu\bigl(X^{\delta,x}_{s,\max\{s,\delta(r)\}}\bigr)\,dr
+\int_{s}^{t}\sigma\bigl(X^{\delta,x}_{s,\max\{s,\delta(r)\}}\bigr)\,dW_r
\notag\\
&\quad
+\int_{s}^{t}\int_{\mathbb{R}_0}
\gamma\bigl(X^{\delta,x}_{s,\max\{s,\delta(r-)\}}\bigr)
\,z\,\widetilde{N}(dr,dz).
\end{align}

Classical error analyses for numerical schemes are typically formulated in terms of \(L^p\)-errors evaluated at fixed time points. More precisely, one studies the convergence of
\[
\mathbb{E}\bigl[|X_t-X^{\delta,x}_{0,t}|^p \,\big|\, X_0=x\bigr]
\]
as the mesh size \(|\delta|\to0\), for some \(t\in[0,T]\) and \(p>0\), where
\[
|\delta|
=
\max\Bigl\{
|s-t|:
s,t\in\delta([0,T]),\ s<t,\ (s,t)\cap\delta([0,T])=\emptyset
\Bigr\}.
\]
Although such estimates provide important information on pointwise convergence, they do not adequately capture the dependence of the solution on the initial condition nor its global regularity properties.

Recently, the study of convergence rates in temporal-spatial H\"older norms has attracted considerable attention in the numerical analysis of stochastic differential equations,
%Let $\widetilde{\mathbb{S}} = \mathbb{S}\cup\{\iota \}$, let $|.|: \widetilde{\mathbb{S}} \to [0,T]$ satisfy that $|\iota| =0$ and 
%The analysis of convergence rates in temporal-spatial H\"older norms has emerged as an interesting topic in the numerical approximation of stochastic differential equations (SDEs). Classical error estimates for numerical schemes typically focus on \(L^p\)-errors evaluated at fixed time points (see, e.g., \cite{KP92}), and thus fail to capture the dependence of the solution on the initial condition as well as its global regularity properties. In contrast, 
since they allow for a uniform quantification of approximation errors over the underlying domain, thereby providing a more comprehensive description of global accuracy. This framework is particularly well suited for the investigation of spatial regularity of approximation processes, which plays a crucial role in the stability and efficiency of subsequent numerical procedures. In \cite{HN22}, the authors  have established strong convergence rates in temporal-spatial H\"older norms under suitable regularity assumptions on the coefficients, typically requiring bounded first- and second-order derivatives. These results have important implications for the development of multigrid and multilevel approximation techniques, which significantly outperform classical interpolation methods on fine grids in high-dimensional settings. In particular, spatial regularity estimates for Euler-Maruyama schemes constitute a key ingredient in the convergence analysis of multilevel Picard methods for nonlinear parabolic partial differential equations \cite{NNW25}. Moreover, such estimates contribute to the refinement of multilevel Monte Carlo methods, leading to near-optimal computational complexity for the approximation of global solutions to associated integral equations \cite{Giles2008}. They are also of fundamental importance in financial engineering, where backward stochastic differential equations are studied via representations in terms of forward processes, and where global regularity is essential for the robustness of numerical implementations \cite{HJKN23}.

The objective of the present paper is to extend the strong convergence analysis of the Euler-Maruyama scheme in temporal-spatial H\"older norms, developed in \cite{HN22}, to stochastic differential equations driven jointly by Brownian motion and pure-jump L\'evy processes. The presence of jumps introduces substantial analytical difficulties due to the discontinuity of sample paths and the distinct scaling behaviour of jump increments. To overcome these difficulties, we establish a new version of the Lyapunov estimate in Lemma~2.2 of \cite{CHJ22}, together with an extension of the Gronwall-Lyapunov inequality from Theorem~2.4 of \cite{HHM21} adapted to the jump setting. In particular, unlike Brownian increments $W_{t+s}-W_{t}$, whose \(p\)-moments are of order \(s^{p/2}\), the increments $Z_{t+s}-Z_{t}$ satisfy moment estimates of order \(s\) for all \(p\geq2\). Consequently, the temporal-spatial H\"older convergence analysis in the present framework differs substantially from that in \cite{HN22}, and requires a refined treatment of the jump contributions in both the temporal and spatial estimates.

The paper is organized as follows. We present some preliminary results in Section \ref{Preli}. The main result on the estimate of error in temporal-spatial H\"older norms is considered in Section \ref{mainresult}.

\section{Some preliminary results} \label{Preli}

The integral equation \eqref{eqn1} now can be written as 
\begin{equation*} 
X_t=X_0+\int_0^t \mu(X_s)ds+\int_0^t \sigma(X_s)dW_s + \int_0^t\int_{\mathbb{R}_0} \gamma \left(X_{s-}\right)z\widetilde{N}(d s, d z), \quad t \geq 0.
\end{equation*}	

% Numerical experiments and simulations are provided in Section \ref{sec:num}. 

For the sake of convenience, we denote $ m_p:= \int_{\bR_0}\vert z\vert^p\nu(d z)$ for $p>0$. We denote by $\mathbb{E}$ the expectation with respect to $\pr$ and by $C^2(\mathbb{R}, \mathbb{R})$ the set of functions with first and second bounded derivatives. Let $C^{1,2}([0, T]\times \mathbb{R}, [0, \infty))$ be the class of functions which are continuously differentiable once with respect to the time variable and twice with respect to the spatial variable, and $L([0, T], [0, \infty))$  denotes the set of measurable functions from $[0, T]$ to $[0, \infty)$. For all $x, y \in \mathbb{R}$, $x \land y: =\min \{x, y\}$ and $x \vee y: =\max \{x, y\}$.

As usual, positive constants will be denoted by $C_p$ whose values may vary from one line to the next.

Next, we introduce some preliminary results on  Gronwall inequalities, Lyapunov estimate, Gronwall-Lyapunov inequality and Burkholder-Davis-Gundy's inequality with jumps, which are needed for the proof of the main results.

%\begin{Lem} (\cite[Lemma 2.1]{HN22})
%  Let $a, c_{\star} \in [0, \infty), T\in \mathbb{R}, t_0\in (-\infty, T)$, let $x: [t_0, T] \to [0, \infty), \delta: [t_0,T] \to [t_0, T]$ be measurable, and assume for all $t\in [t_0, T]$ that $\delta (t)\leq t$ and $x(t) \leq a+ \int_{t_0}^t c_{\star}x(\delta(s)) ds <\infty$. Then for all $t\in [t_0, T]$, it holds that $x(t)\leq a e^{c_{\star} (t-t_0)}$.   
%\end{Lem}	

\begin{Lem}
     (\cite[Corollary 2.2]{HN22})
\label{Cor2.1}
Let $t_0 < T$, and let $a, x : [t_0, T] \to [0,\infty)$ and
$\delta : [t_0, T] \to [t_0, T]$ be measurable functions such that
$\delta(t) \le t$ for all $t \in [t_0, T]$. Assume that, for some
constants $c_\star > 0$ and $p \ge 1$,
\[
x(t) \le a(t) + \left( \int_{t_0}^t |c_\star x(\delta(s))|^p \, ds \right)^{1/p}
< \infty, \quad t \in [t_0, T].
\]
Then, for all $t \in [t_0, T]$, it holds that
\[
x(t) \le 2^{1-\frac{1}{p}}
\left( \sup_{s \in [t_0, t]} a(s) \right)
\exp\left( \frac{2^{p-1} c_\star^p (t - t_0)}{p} \right).
\]
\end{Lem}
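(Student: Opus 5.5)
Fix $t\in[t_0,T]$ and set $A:=\sup_{s\in[t_0,t]}a(s)$; if $A=\infty$ there is nothing to prove, so assume $A<\infty$. For every $u\in[t_0,t]$ the hypothesis gives $x(u)\le A+\bigl(\int_{t_0}^u |c_\star x(\delta(s))|^p ds\bigr)^{1/p}$. The plan is to raise this to the $p$-th power and use the elementary convexity inequality $(\alpha+\beta)^p\le 2^{p-1}(\alpha^p+\beta^p)$ for $\alpha,\beta\ge0$, $p\ge1$. This yields
\[
x(u)^p\le 2^{p-1}A^p+2^{p-1}c_\star^p\int_{t_0}^u x(\delta(s))^p\,ds,\qquad u\in[t_0,t].
\]
Here $y:=x^p$ is measurable, nonnegative, and the integral is finite because of the finiteness built into the hypothesis.

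Next I would turn this into a linear Gronwall inequality with delay. Because $\delta(s)\le s\le u\le t$, every value $x(\delta(s))$ with $s\le u$ is controlled by the same inequality evaluated at the earlier point $\delta(s)$. The cleanest route is to pass to the running supremum $Y(u):=\sup_{r\in[t_0,u]}y(r)$. Since the right-hand side above is nondecreasing in $u$, we get
\[
Y(u)\le 2^{p-1}A^p+2^{p-1}c_\star^p\int_{t_0}^u Y(s)\,ds .
\]
For this step I use $y(\delta(s))\le Y(s)$. I also need $Y(u)<\infty$. This follows from the displayed bound itself, since its right-hand side at $u=t$ is finite and dominates $y(r)$ for all $r\le t$.

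The classical Gronwall lemma then gives $Y(u)\le 2^{p-1}A^p\exp\bigl(2^{p-1}c_\star^p(u-t_0)\bigr)$. Measurability of $Y$ is not an issue, because $Y$ is monotone. Taking $p$-th roots at $u=t$ yields
\[
x(t)\le 2^{(p-1)/p}A\exp\bigl(2^{p-1}c_\star^p(t-t_0)/p\bigr),
\]
which is exactly the claim, since $2^{(p-1)/p}=2^{1-1/p}$.

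The only delicate point is the finiteness and integrability needed to apply Gronwall to $Y$ rather than to $y\circ\delta$. I expect this to be the main obstacle, though a mild one. I would handle it via the observation above: the assumed finiteness of the integral term at $u=t$ bounds $Y$ uniformly on $[t_0,t]$. Alternatively, one could cite the delayed Gronwall lemma of \cite{HN22} (Lemma 2.1 there) applied with constant $a=2^{p-1}A^p$ and $c_\star$ replaced by $2^{p-1}c_\star^p$.
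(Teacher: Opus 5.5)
Your proof is correct and follows the route the stated constants point to: the paper gives no proof of its own but cites \cite[Corollary 2.2]{HN22}. There, $2^{1-1/p}$ and $2^{p-1}c_\star^p/p$ arise from applying $(\alpha+\beta)^p\le 2^{p-1}(\alpha^p+\beta^p)$ and then the delayed linear Gronwall lemma \cite[Lemma 2.1]{HN22} to $x^p$, which is what you do. Your running-supremum argument, with boundedness of $Y$ on $[t_0,t]$ taken from the finiteness in the hypothesis at $u=t$, is a valid self-contained replacement for citing that delayed Gronwall step.
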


Next, we extend the result on the Lyapunov estimate given in Lemma 2.2 of \cite{CHJ22} in the presence of jumps.
\begin{Lem} (A Lyapunov estimate) \label{Lem2.2}
 Let $V\in C^{1,2}([0, T]\times \mathbb{R}, [0, \infty)), \alpha \in L([0, T], [0, \infty))$ with $\int_0^T \alpha (t) dt<\infty,$ let $\tau: \Omega\to [0, T]$ be a stopping time, let the coefficients $\mu, \sigma, \gamma$ be continuous functions and let $X: [0, T] \times \Omega \to \mathbb{R}$ be the c\`adl\`ag and adapted  solution process to the equation \eqref{eqn1} with sample paths satisfying
\begin{align}
    \label{ConLem2.3}
        & \dfrac{\partial V}{\partial t} (t \land \tau, X_{t \land \tau}) + \dfrac{\partial V}{\partial x} (t \land \tau, X_{t \land \tau}) \mu (X_{t \land \tau})+ \dfrac{1}{2} \dfrac{\partial^2 V}{\partial x^2}(t \land \tau, X_{t \land \tau}) \sigma^2 (X_{t \land \tau}) \notag \\
        & \quad + \int_{\bR_0} \left[ V(t \land \tau, X_{t \land \tau} + \gamma (X_{t \land \tau})z ) - V(t \land \tau, X_{t \land \tau}) - \dfrac{\partial V}{\partial x}(t \land \tau, X_{t \land \tau}) \gamma (X_{t\land \tau}) z \right] \nu (dz) \notag \\
        & \leq \alpha (t \land \tau) V(t \land \tau, X_{t \land \tau}), \quad \mathbb{P} \text{-a.s. for all } t \in [0,T].
    \end{align}
    Then,
\begin{align*}
    \mathbb{E}\left[V(\tau, X_{\tau})\right] \leq \exp \left( \int_0^T \alpha(s) ds \right) \mathbb{E} [V(0, X_0)] \in [0,\infty].
\end{align*}
\end{Lem}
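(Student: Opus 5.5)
We will follow the proof of Lemma 2.2 in \cite{CHJ22}, using It\^o's formula for semimartingales with jumps instead of the continuous one. Define the discounting factor $\beta(t):=\exp\bigl(-\int_0^t\alpha(s)\,ds\bigr)$, which is absolutely continuous, and consider $Y_t:=\beta(t\wedge\tau)V(t\wedge\tau,X_{t\wedge\tau})$. Applying It\^o's formula for jump processes to $\beta(t)V(t,X_t)$ on $[0,t\wedge\tau]$, and writing the jump part via $N=\widetilde N+\nu(dz)dt$, gives
\begin{align*}
Y_t &= V(0,X_0)+\int_0^{t\wedge\tau}\beta(s)\Bigl[-\alpha(s)V(s,X_s)+(\mathcal{L}V)(s,X_s)\Bigr]ds
+\int_0^{t\wedge\tau}\beta(s)\frac{\partial V}{\partial x}(s,X_s)\sigma(X_s)\,dW_s\\
&\quad+\int_0^{t\wedge\tau}\int_{\bR_0}\beta(s)\bigl[V(s,X_{s-}+\gamma(X_{s-})z)-V(s,X_{s-})\bigr]\widetilde N(ds,dz),
\end{align*}
where $\mathcal{L}V$ denotes the left-hand side of \eqref{ConLem2.3}. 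Here we use that $X_{s-}=X_s$ for Lebesgue-a.e.\ $s$, so the $ds$-integrands may be evaluated at $X_s$. By \eqref{ConLem2.3} the $ds$-integrand is $\le 0$ for all $s\le\tau$.

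The remaining task is to remove the stochastic integrals, which are only local martingales. We introduce localizing stopping times $\tau_n:=\inf\{t\ge0: |X_t|\ge n\}\wedge\tau$. For $s\le\tau_n$ we have $|X_{s-}|\le n$, and the coefficients are continuous, so $\gamma(X_{s-})$, $\sigma(X_{s-})$, $\partial_xV$ and $V$ on the relevant compact set are bounded. The Brownian integral stopped at $\tau_n$ is then a true martingale. For the compensated Poisson integral we split $\{|z|\le1\}$ and $\{|z|>1\}$. On small jumps, $|V(s,x+\gamma(x)z)-V(s,x)|\le C_n|z|$ by the mean value theorem, and the second-moment condition $\int_{|z|\le1}z^2\nu(dz)<\infty$ gives square integrability. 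On big jumps, where $V$ could be large, we argue as follows: since $V\ge0$, the compensated integral equals $\int\!\!\int V(\cdot+\gamma z)\,N(ds,dz)$ minus its compensator minus $\int\!\!\int V(\cdot)\,N$ plus its compensator. The positive $N$-part and its compensator have equal expectations by monotone convergence, possibly both infinite. To avoid $\infty-\infty$, it is cleaner to keep the positive large-jump part $\int\!\!\int\beta V(s,X_{s-}+\gamma z)\mathbf 1_{|z|>1}N$ on the left side and run a further localization. Alternatively, we can use the standard fact that a nonnegative local supermartingale is a supermartingale (Fatou).

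The cleanest route, which we will adopt, is the second. The identity above shows $Y$ is a nonnegative process equal to $V(0,X_0)$ plus a local martingale plus a nonincreasing process. Hence $Y$ is a nonnegative local supermartingale, and by Fatou's lemma it is a supermartingale once $\mathbb{E}[V(0,X_0)]<\infty$; if this expectation is infinite there is nothing to prove. Therefore $\mathbb{E}[\beta(\tau)V(\tau,X_\tau)]=\mathbb{E}[Y_T]\le\mathbb{E}[V(0,X_0)]$. Since $\beta(\tau)\ge \exp(-\int_0^T\alpha)$, the claim follows.

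We expect the main obstacle to be verifying that the jump term really is a local martingale, i.e.\ that $\int_0^{t}\int|V(s,X_{s-}+\gamma(X_{s-})z)-V(s,X_{s-})|\,\nu(dz)ds$ is locally finite given only $V\ge0$ and $C^{1,2}$. For the large-jump part we will use the finiteness of the integral in \eqref{ConLem2.3} (implicitly assumed, since the inequality is stated), together with $\nu(|z|>1)<\infty$ and $\int|z|\mathbf 1_{|z|>1}\nu<\infty$, the latter coming from $Z$ being centered. These give local integrability after stopping at $\tau_n$.
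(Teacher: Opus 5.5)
Your argument is correct, but it is organized differently from the paper's proof.

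\textbf{How the paper proceeds.} The paper does not discount. It stops at times $\rho_n$ built from $\sup_{s\le t}V(s,X_s)$, $\int_0^t|\frac{\partial V}{\partial x}\sigma|^2ds$ and $\int_0^t\int_{\bR_0}|V(s,X_s+\gamma(X_s)z)-V(s,X_s)|^2\nu(dz)ds$. This makes both stopped stochastic integrals $L^2$-martingales, and taking expectations gives $\mathbb{E}[V(t\wedge\rho_n,X_{t\wedge\rho_n})]\le\mathbb{E}[V(0,X_0)]+\int_0^t\alpha(s)\mathbb{E}[V(s\wedge\rho_n,X_{s\wedge\rho_n})]ds$. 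It closes this with Gronwall's lemma and lets $n\to\infty$ by Fatou.

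\textbf{What your route buys.} You multiply by $e^{-\int_0^t\alpha}$, the device the paper itself uses in Lemma \ref{Lem2.4}. This makes the drift nonpositive, removes the Gronwall step, and yields the slightly stronger bound $\mathbb{E}[e^{-\int_0^\tau\alpha}V(\tau,X_\tau)]\le\mathbb{E}[V(0,X_0)]$. Your route also asks less of the jumps. The paper's $\rho_n$ reaches $\tau$ only if $\int_0^T\int_{\bR_0}|V(s,X_s+\gamma(X_s)z)-V(s,X_s)|^2\nu(dz)ds<\infty$ a.s. That is a second-moment condition on $V(x+\gamma(x)z)$ which \eqref{ConLem2.3} does not provide; for example $V(x)\sim|x|^p$, $\gamma\equiv1$, $\int|z|^{2p}\nu(dz)=\infty$ gives $\rho_n\equiv0$. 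You only need local integrability of the jump integrand, and \eqref{ConLem2.3} does provide that.

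\textbf{The step to make precise.} Pointwise finiteness of the $\nu$-integral for each fixed $s$ would not give integrability in $s$; what matters is the upper bound in \eqref{ConLem2.3}.

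First, finiteness is not an extra assumption. Since $V\ge0$, the integrand in \eqref{ConLem2.3} is $O(z^2)$ on $\{|z|\le1\}$. On $\{|z|>1\}$ it is bounded below by $-V-|\frac{\partial V}{\partial x}\gamma z|$, which is $\nu$-integrable. So the integral exists in $(-\infty,\infty]$, and \eqref{ConLem2.3} forces it to be finite.

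Next, isolate $\int_{|z|>1}V(s,X_s+\gamma(X_s)z)\nu(dz)$ in \eqref{ConLem2.3} and bound every other term on $\{|x|\le n\}$. For a.e.\ $s<\tau_n$ this gives $\int_{|z|>1}V(s,X_s+\gamma(X_s)z)\nu(dz)\le C_n(1+\alpha(s))$. Hence $\int_0^{\tau_n}\int_{|z|>1}|V(s,X_{s-}+\gamma(X_{s-})z)-V(s,X_{s-})|\nu(dz)ds\le C_n\bigl(T+\int_0^T\alpha(s)ds\bigr)$, a deterministic bound. The stopped jump integral is therefore a true martingale, and your detour through the $N$-part and $\infty-\infty$ is unnecessary.
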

\begin{proof}
 First, without loss of generality, we assume that $\mathbb{E}[V(0, X_0)] < \infty.$ Since $X$ is the c\`adl\`ag stochastic process and the coefficients $\mu, \sigma, \gamma$ are continuous functions, we have
  \begin{align*}
    \int_0^T \left(|\mu (X_s)|+ \sigma^2(X_s) + m_2 \gamma ^2(X_s)\right) ds  < \infty  \quad \mathbb{P} \text{-a.s.},
    \end{align*} 
 where recall that $ m_2= \int_{\bR_0}z^2\nu(d z)$.
For all $n\in \mathbb{N}$, we define stopping times $\rho_n: \Omega \to [0, T]$ by
  %\textcolor{red}{con thieu phan jumps nua phai k0 ? + gradient nhieu chieu hay dao ham rieng theo $x$}
\begin{align*}
    \rho_n:&= \inf \Bigg(  \{\tau\}\cup \Bigg\{ t\in [0, T]: \sup_{s\in [0, t]} V(s, X_s)+ \int_0^t \Bigg |\dfrac{\partial V}{\partial x}  (s, X_s) \sigma (X_s)\Bigg|^2 ds  \notag \\
    &  \qquad + \int_0^t \int_{\bR_0} \left| V(s, X_{s} + \gamma (X_{s })z ) - V(s, X_{s}) \right|^2 \nu (dz) ds  \geq n \Bigg\} \Bigg).
\end{align*}
Then, using It\^o's formula, we have for all $(t,n) \in [0,T]\times \mathbb{N}$,
%\textcolor{red}{can chu y $X_{s-}$ khi tich phan di kem voi do do $\widetilde{N}(ds,dz)$; da vua sua xong bieu thuc phia duoi. Tu day tro ve sau can ra soat sua lai tat ca, con rat nhieu o phia duoi}
\begin{align*}
   V(t\land \rho_n, X_{t\land \rho_n}) 
   & = V(0, X_0) 
   +\int_0^{t\land \rho_n} \Bigg( \dfrac{\partial V}{\partial t}  (s, X_s) +   \dfrac{\partial V}{\partial x}  (s, X_s) \mu (X_s) +  \dfrac{1}{2} \dfrac{\partial^2 V}{\partial x^2}(s, X_s) \sigma^2 (X_s)  \notag \\
   & \qquad +\int_{\bR_0} \left[ V(s, X_s + \gamma (X_{s })z ) - V(s, X_s) - \dfrac{\partial V}{\partial x}(s, X_s) \gamma (X_{s })z \right] \nu (dz)\Bigg) ds\notag \\
   & \qquad + \int_0^{t\land \rho_n}  \dfrac{\partial V}{\partial x}  (s, X_s) \sigma (X_s) dW_s \notag \\
     & \qquad + \int_0^{t\land \rho_n} \int_{\bR_0} \left[ V(s, X_{s-} + \gamma (X_{s- })z ) - V(s, X_{s-})  \right] \widetilde{N}(ds,dz). 
\end{align*}
Hence, using condition $\eqref{ConLem2.3}$, we get  
\begin{align*}
   V(t\land \rho_n, X_{t\land \rho_n}) & \leq V(0, X_0) + \int_0^{t\land \rho_n} \alpha(s)V(s, X_s)ds + \int_0^{t\land \rho_n} \dfrac{\partial V}{\partial x}  (s, X_s) \sigma (X_s) dW_s \notag \\
   & \qquad + \int_0^{t\land \rho_n} \int_{\bR_0} \left[ V(s, X_{s-} + \gamma (X_{s- })z ) - V(s, X_{s-})  \right] \widetilde{N}(ds,dz).  
\end{align*}
 Next, using the definition of $\rho_n$ and taking expectations of both sides, we obtain
 \begin{align*}
    \mathbb{E} \left[ V(t\land \rho_n, X_{t\land \rho_n})\right] & \leq  \mathbb{E} [V(0, X_0)] + \int_0^{t} \alpha(s)\mathbb{E}[{\bf 1}_{(s \leq \rho_n)} V(s, X_s)]ds \notag \\
    & \leq \mathbb{E}[V(0, X_0)] + \int_0^{t} \alpha(s)\mathbb{E}[V(s\land \rho_n, X_{s\land \rho_n})] ds,
 \end{align*}
for all $(t,n) \in [0,T]\times \mathbb{N}$. Note that  $\mathbb{E} \left[ V(t\land \rho_n, X_{t\land \rho_n})\right] \leq  \mathbb{E}[V(0, X_0)] + n \int_0^{t} \alpha(s) ds <\infty$ for all $(t,n) \in [0,T]\times \mathbb{N}$. Applying Gronwall's lemma, we get 
$$\mathbb{E} \left[ V(t\land \rho_n, X_{t\land \rho_n})\right] \leq \exp \left( \int_0^t \alpha(s) ds \right) \mathbb{E}[V(0, X_0)],$$ 
for all $(t,n) \in [0,T]\times \mathbb{N}$.
Consequently, taking $n\uparrow \infty$ and using Fatou’s lemma, we obtain
$$\mathbb{E} \left[ V(\tau, X_{\tau})\right] \leq \exp \left( \int_0^T \alpha(s) ds \right) \mathbb{E}[V(0, X_0)].$$
Thus, the result follows.
\end{proof}
Next, we extend the result on the stochastic Gronwall-Lyapunov inequality given in Theorem 2.4 of \cite{HHM21} in the presence of jumps.

\begin{Lem}

    \label{Lem2.4} (A Gronwall-Lyapunov inequality) 
Let $\tau $ be a stopping time,  $\widehat{\mu}, \widehat{\sigma}$ and $\widehat{\gamma}$ real-valued measurable functions such that 
\begin{equation} \label{eqn:hat}
\int_0^{\tau} \left(|\widehat{\mu} (\xi_s)| + |\widehat{\sigma}(\xi_s)|^2 + | \widehat{\gamma}(\xi_s)|^2 \right) ds <\infty, 
 \quad \mathbb{P}\text{-a.s.},
 \end{equation} 
 where  $\xi$ is an adapted  c\`adl\`ag process satisfying for all $t\in [0, T]$,   
 $$\xi_{t \wedge \tau } = \xi_0 + \int_0^t {\bf 1}_{[0, \tau]}(s)\widehat{\mu}(\xi _s)ds+ \int_0^t {\bf 1}_{[0, \tau]}(s)\widehat{\sigma}(\xi _s)dW_s + \int_0^t {\bf 1}_{[0, \tau]}(s)\widehat{\gamma}(\xi _{s-}) dZ_s, \quad  \mathbb{P} \text{-a.s.}.$$   
Let  $\alpha, \beta: [0, T] \to [0, \infty )$  be measurable functions  satisfying  $\int_0^{T}|\alpha_r|dr <\infty$. Suppose that for some function  $V=(V(s, x))_{s\in [0, T], x\in \mathbb{R}} \in C^{1,2}([0, T] \times \mathbb{R}, [0, \infty))$ it holds for any $s\in [0, u]$ with $u\in [0, T]$    that
\begin{align}
\label{Con2.4}
& \dfrac{\partial V}{\partial s} (s, \xi_s) + \dfrac{\partial V}{\partial x} (s, \xi_s) \widehat{\mu}(\xi_s)+\dfrac{1}{2}\frac{\partial^2 V}{\partial x^2}(s, \xi_s)\widehat{\sigma}^2(\xi_s)  \notag \\
& \quad + \int_{\bR_0} \left[ V(s, \xi_s+ \widehat{\gamma}(\xi_{s})z)-V(s, \xi_s) - \dfrac{\partial V}{\partial x} (s, \xi_s) \widehat{\gamma}( \xi_{s})z \right] \nu (dz)  \notag \\
& \leq \alpha(s) V(s, \xi_s)+  \beta(s) \quad \mathbb{P} \text{-a.s.}.
\end{align}
Then, for all $t \in [0, T]$, 
%\begin{align*}
% \mathbb{E} [V(u, \xi_{u})] \leq \exp \left( \int_0^{u} \alpha(r) dr \right) \left( \mathbb{E}[V(0, \xi_0)] +\int_0^{u}  \dfrac{  \beta(s)}{\exp \left( \int_0^{s} \alpha(r) dr \right)} ds \right).
%\end{align*}
\begin{align*}
\mathbb{E} \left[ \dfrac{V(\tau \wedge t, \xi_{\tau \wedge t})}{\exp \left( \int_0^{\tau \wedge t} \alpha(r)dr\right)}\right] \leq \mathbb{E}[V(0, \xi_0)]  + \mathbb{E} \left[ \int_0^{\tau \land t}  \dfrac{\beta(s)}{\exp \left( \int_0^{s} \alpha(r) dr \right)}ds \right]. 
\end{align*}
\end{Lem}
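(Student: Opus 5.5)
The plan is to apply It\^o's formula to the discounted process
\[
U(s,x):=\exp\Bigl(-\int_0^s\alpha(r)\,dr\Bigr)V(s,x).
\]
Then localize and take expectations, as in the proof of Lemma \ref{Lem2.2}. Write $A(s):=\int_0^s\alpha(r)dr$, which is absolutely continuous since $\int_0^T|\alpha_r|dr<\infty$. The function $U$ is $C^{2}$ in $x$ and absolutely continuous in $s$, with $\partial_s U(s,x)=e^{-A(s)}\bigl(\partial_s V(s,x)-\alpha(s)V(s,x)\bigr)$ for a.e.\ $s$. This suffices for the It\^o formula for semimartingales with jumps, because the time-dependence enters only through a finite-variation factor; alternatively, apply the product rule to $e^{-A(s)}$ and $V(s,\xi_s)$.

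First apply It\^o's formula to $U(s\wedge\tau,\xi_{s\wedge\tau})$, using the dynamics of $\xi$ stopped at $\tau$ and the L\'evy--It\^o decomposition $dZ_s=\int_{\bR_0}z\,\widetilde N(ds,dz)$. For $t\in[0,T]$ this gives
\begin{align*}
U(t\wedge\tau,\xi_{t\wedge\tau})&=V(0,\xi_0)+\int_0^{t\wedge\tau}e^{-A(s)}\Bigl(\mathcal{L}V(s,\xi_s)-\alpha(s)V(s,\xi_s)\Bigr)ds
+\int_0^{t\wedge\tau}e^{-A(s)}\frac{\partial V}{\partial x}(s,\xi_s)\widehat\sigma(\xi_s)\,dW_s\\
&\quad+\int_0^{t\wedge\tau}\int_{\bR_0}e^{-A(s)}\bigl[V(s,\xi_{s-}+\widehat\gamma(\xi_{s-})z)-V(s,\xi_{s-})\bigr]\widetilde N(ds,dz).
\end{align*}
Here $\mathcal{L}V$ is exactly the left-hand side of \eqref{Con2.4}. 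Since $\xi$ is c\`adl\`ag, $\xi_{s-}=\xi_s$ for a.e.\ $s$, so $\xi_{s-}$ may be replaced by $\xi_s$ in the $ds$-integrals. By \eqref{Con2.4}, the drift integrand is bounded by $e^{-A(s)}\beta(s)$.

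Next, localize. Take stopping times $\rho_n\uparrow\tau$ (or $T$), defined as in Lemma \ref{Lem2.2} with $\sigma,\gamma$ replaced by $\widehat\sigma,\widehat\gamma$. Up to $\rho_n$, the quadratic-variation quantities $\int|\partial_xV\,\widehat\sigma|^2ds$ and $\int\!\int|V(s,\xi_s+\widehat\gamma z)-V(s,\xi_s)|^2\nu(dz)ds$ are then bounded by $n$. Condition \eqref{eqn:hat} together with continuity of $V,\partial_xV$ along c\`adl\`ag paths guarantees $\rho_n\to\tau$ a.s. Replacing $t\wedge\tau$ by $t\wedge\rho_n$, both stochastic integrals become true martingales with zero mean, because $e^{-A}\le1$. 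Taking expectations yields
\[
\mathbb{E}\bigl[U(t\wedge\rho_n,\xi_{t\wedge\rho_n})\bigr]\le\mathbb{E}[V(0,\xi_0)]+\mathbb{E}\Bigl[\int_0^{t\wedge\rho_n}e^{-A(s)}\beta(s)\,ds\Bigr].
\]
We may assume $\mathbb{E}[V(0,\xi_0)]<\infty$, since otherwise the claim is trivial.

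Finally let $n\to\infty$. On the left, $U\ge0$ and $U(t\wedge\rho_n,\xi_{t\wedge\rho_n})\to U(t\wedge\tau,\xi_{t\wedge\tau})$ a.s.; this is where care about jumps at $\tau$ is needed, since the sequence $\rho_n$ may be eventually equal to $\tau$ or may approach it. Fatou's lemma then gives the left side of the claim. On the right, $\beta\ge0$, so monotone convergence applies.

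The main obstacle I expect is the localization step. The jump integrand must be square-integrable against $\nu(dz)ds$ up to $\rho_n$, and $\rho_n\to\tau$ must hold. Condition \eqref{eqn:hat} only controls $\widehat\gamma^2$, so this has to be combined with finiteness of $m_2$, or with a bound of the type $|V(s,x+h)-V(s,x)|\le\sup|\partial_xV|\,|h|$ on compacts; in general one may also need to handle large jumps separately. If this is too delicate, I would instead define $\rho_n$ directly through these integrals, and through the first exit of $\xi$ from $[-n,n]$, to force $\rho_n\uparrow\tau$.
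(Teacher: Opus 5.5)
Your route is the paper's: It\^o's formula for the discounted function $e^{-A(s)}V(s,x)$, localization by the stopping times of Lemma~\ref{Lem2.2} built from $V$ and the two ``quadratic-variation'' integrals, zero mean of the stopped stochastic integrals, then Fatou on the left and monotone convergence on the right. Your product-rule remark for a merely integrable $\alpha$ is a small improvement over the paper. The first worry you raise is harmless. Whenever the localizing functional is bounded on $[0,\tau]$, every time at which it reaches $n$ lies in $(\tau,T]$ for large $n$. Hence $\rho_n=\tau$ eventually, and no left limit at $\tau$ enters.

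The genuine gap is your second worry, and neither your main line nor your fallback closes it. The paper asserts $\mathbb{P}(\lim_n\tau_n=\tau)=1$ at exactly this point, citing only \eqref{eqn:hat} and $V\in C^{1,2}$. These control $\sup_{s\le\tau}V(s,\xi_s)$ and $\int_0^\tau|\frac{\partial V}{\partial x}\widehat\sigma|^2ds$. They do not control $\int_0^\tau\int_{\bR_0}|V(s,\xi_s+\widehat\gamma(\xi_s)z)-V(s,\xi_s)|^2\nu(dz)ds$, and $m_2<\infty$ does not help.

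For instance, take $V(x)=(1+x^2)^{p/2}$, $\xi_s=x_0+\gamma_0Z_s$ with $\gamma_0\neq0$, and $\nu$ with $m_2,m_p<\infty=m_{2p}$. This is essentially the setting in which the lemma is applied in Lemma~\ref{Lem3.3}. Then \eqref{Con2.4} holds with constant $\alpha$ and $\beta=0$. But the jump integral is $+\infty$ on every $[0,s]$, so $\rho_n=0$ for all $n$, and Fatou only returns $\mathbb{E}[V(0,\xi_0)]$. Stopping when $\xi$ leaves $[-n,n]$ does not help either. The trouble comes from $\widehat\gamma(\xi_s)z$ with $|z|$ large (and $\widehat\gamma$ is merely measurable), not from $\xi_s$.

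The missing idea is to drop the $L^2(\nu)$ requirement. Write $V(s,\xi_{s-}+\widehat\gamma(\xi_{s-})z)-V(s,\xi_{s-})=\frac{\partial V}{\partial x}(s,\xi_{s-})\widehat\gamma(\xi_{s-})z+R_s(z)$.
\begin{itemize}
\item The first term gives $\int e^{-A}\frac{\partial V}{\partial x}\widehat\gamma\,dZ$, which is a local martingale by \eqref{eqn:hat} and $m_2<\infty$.
\item The compensated integral of $e^{-A}R_s(z)$ is a local martingale as soon as $\int_0^\tau\int_{\bR_0}|R_s(z)|\nu(dz)ds<\infty$ a.s. The $\nu$-integral in \eqref{Con2.4} needs this anyway. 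It also follows from \eqref{Con2.4} by integrating in $s$ when $R_s\ge0$ (e.g.\ $V$ convex in $x$) and $\beta$ is integrable.
\end{itemize}
Localize with a localizing sequence of this local martingale. It exceeds $T$ eventually, so $t\wedge\tau\wedge\rho_n=t\wedge\tau$ for large $n$. Your expectation and Fatou/monotone-convergence steps then go through unchanged.
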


\begin{proof}
For each $n\geq 1$, let $\tau_n$ be a stopping time defined by 
\begin{align*}
   &\tau_n=\inf \left( \{\tau \}  \cup \left\{  s\in [0, T]: V(s, \xi_s)+ \int_0^s  \left|\dfrac{\partial V}{\partial x}(r, \xi_r) \widehat{\sigma}  (\xi_r)\right|^2 dr \right. \right. \notag \\
    &  \quad \quad \quad \quad  +\left. \left. \int_0^s \int_{\bR_0} \left| V(r, \xi_{r} + \widehat{\gamma} (\xi_{r })z ) - V(r, \xi_{r}) \right|^2 \nu (dz) dr  \geq n \right\} \right).
    \end{align*}
Applying It\^o's formula, we get for every $t\in [0, T],$
\begin{align*}
 &\dfrac{ V(\tau_n \wedge t, \xi_{\tau_n \wedge t}) }{\exp \left( \int_0^{\tau_n \wedge t} \alpha(r) dr \right)} = V(0, \xi_0)  + \int_0^{\tau_n \wedge t} \dfrac{1}{\exp \left( \int_0^{s} \alpha(r) dr \right)}\Bigg( \frac{\partial V}{\partial s}(s, \xi_s)- V(s, \xi_s) \alpha(s)  \notag \\
& \qquad +  \frac{\partial V}{\partial x}(s, \xi_s)\widehat{\mu}(\xi_s)+\frac{1}{2}\frac{\partial^2 V}{\partial x^2}(s, \xi_s)\widehat{\sigma}^2(\xi_s)  \notag \\ 
 & \qquad +  \int_{\bR_0} \left[ V(s, \xi_s + \widehat{\gamma} (\xi_{s})z ) - V(s, \xi_s) - \dfrac{\partial V}{\partial x}(s, \xi_s) \widehat{\gamma} (\xi_{s})z  \right] \nu (dz) \Bigg) ds \notag \\
& \qquad + \int_0^{\tau_n \wedge t}  \dfrac{\frac{\partial V}{\partial x}(s, \xi_s)\widehat{\sigma}(\xi_s)}{\exp \left( \int_0^{s} \alpha(r) dr \right)} dW_s + \int_0^{\tau_n \wedge t} \int_{\bR_0}  \dfrac{V(s, \xi_{s-} + \widehat{\gamma} (\xi_{s- })z ) - V(s, \xi_{s-}) }{\exp \left( \int_0^{s} \alpha(r) dr \right)}   \widetilde{N}(ds,dz). 
\end{align*}
Then taking expectations both sides and using condition \eqref{Con2.4}, we get
\begin{align*}
\mathbb{E}\left[\dfrac{ V(\tau_n \land t, \xi_{\tau_n \land t})}{\exp \left( \int_0^{\tau_n \land t} \alpha(r) dr \right)} \right] 
& \leq  \mathbb{E}[V(0, \xi_0)]  + \mathbb{E} \left[ \int_0^{\tau_n \land t}  \dfrac{\beta(s)}{\exp \left( \int_0^{s} \alpha(r) dr \right)} \right]ds.
%\notag\\ 
% &\le  \mathbb{E}[V(0, \xi_0)]  + \int_0^{ t} \mathbb{E} \left[\dfrac{{\bf 1}_{[0, \tau_n]}(s) \beta(s)}{\exp \left( \int_0^{s} \alpha(r) dr \right)} \right]ds.
\end{align*}
%Hence, replacing $t$ with $T$ yields 
%\begin{align*}  \mathbb{E}\left[\dfrac{ V(\tau_n \land T, \xi_{\tau_n \land T})}{\exp \left( \int_0^{\tau_n \land T} \alpha(r) dr \right)} \right]   \le  \mathbb{E}[V(0, \xi_0)]  +  \int_0^{ T} \mathbb{E} \left[\dfrac{{\bf 1}_{[0, \tau_n]}(s) \beta(s)}{\exp \left( \int_0^{s} \alpha(r) dr \right)} \right]ds. \end{align*}
Moreover, 
% the fact that $(V(s, \xi_s))_{s\in [0, T]}$, $(\frac{\partial V}{\partial x}(s, \xi_s))_{s\in [0, T]}$ and $(\frac{\partial V}{\partial x}(s, \xi_s +\theta_{(\xi_s, \widehat{\gamma}(\xi_x)z)}\widehat{\gamma}(\xi_s)z))_{s\in [0, T], \theta_{(\xi_s, \widehat{\gamma}(\xi_x)z)} \in [0,1]}, $ 
the fact that $\xi_s$ has c\`adl\`ag sample path, it follows from \eqref{eqn:hat} and the fact  $V \in C^{1,2}([0, T] \times \mathbb{R}, [0, \infty))$ that 
$\mathbb{P}( \lim_{n\to \infty} \tau_n = \tau)=1.$
 % Taking $n\uparrow \infty$, using Fatou’s lemma and and the monotone convergence theorem together with $\mathbb{E}[|1+V(0, \xi_0)|^p] < \infty$
  Thanks to Fatou's lemma, we have 
 \begin{align*}
    \mathbb{E} \left[ \dfrac{V(\tau \wedge t, \xi_{\tau \wedge t})}{\exp \left( \int_0^{\tau \wedge t} \alpha(r)dr\right)}\right] 
%     & = \mathbb{E} \left[ \dfrac{V(u, \xi_{u-})}{\exp \left( \int_0^{u} \alpha(r)dr\right)}\right] 
&= \mathbb{E} \left[ \liminf_{n\to \infty} \dfrac{V(\tau_n \wedge t, \xi_{\tau_n \wedge t})}{\exp \left( \int_0^{\tau_n \wedge t} \alpha(r)dr\right)} \right] \notag\\
     & \leq  \liminf_{n\to \infty} \mathbb{E} \left[\dfrac{ V(\tau_n \wedge t, \xi_{\tau_n \wedge t})}{\exp \left( \int_0^{\tau_n \wedge t} \alpha(r)dr\right)} \right] \notag\\
  & \leq   \liminf_{n\to \infty}  \left( \mathbb{E}[V(0, \xi_0)]  + \mathbb{E} \left[ \int_0^{\tau_n \land t}  \dfrac{\beta(s)}{\exp \left( \int_0^{s} \alpha(r) dr \right)}ds \right] \right). \notag \\
   & =  \mathbb{E}[V(0, \xi_0)]  + \mathbb{E} \left[ \int_0^{\tau \land t}  \dfrac{\beta(s)}{\exp \left( \int_0^{s} \alpha(r) dr \right)}ds \right]. 
 %  & \leq  \liminf_{(0,1] \ni \epsilon \to 0}  \left(\epsilon + \mathbb{E}[V(0, \xi_0)]  + \int_0^T \dfrac{  {\bf 1}_{[0, u]}(s)\beta(s)}{\exp \left( \int_0^{s} \alpha(r) dr \right)} ds\right) \notag\\
%   & = \mathbb{E}[V(0, \xi_0)]  +  \int_0^u \dfrac{ \beta(s)}{\exp \left( \int_0^{s} \alpha(r) dr \right)} ds. 
\end{align*}
%Therefore, for all $u \in [0, T]$,
%\begin{align*}
% \mathbb{E} [V(u, \xi_{u})] \leq \exp \left( \int_0^{u} \alpha(r) dr \right) \left( \mathbb{E}[V(0, \xi_0)] +\int_0^{u}  \dfrac{   \beta(s)}{\exp \left( \int_0^{s} \alpha(r) dr \right)}  ds \right).
%\end{align*}
This finishes the proof.
\end{proof}

We recall Kunita's inequality and the Burkholder-Davis-Gundy's inequality with jumps.
\begin{Lem}[\cite{A59}, Theorem 4.4.23  and \cite{Zhu}, Proposition 2.2]
	\label{BDGjump}  Let $\mathcal{P}$ be the progressive $\sigma$-algebra on $\mathbb{R}_+\times\Omega$ and $\mathcal{B}(\mathbb{R}_0)$ be the Borel $\sigma$-algebra of \;$\mathbb{R}_0$. Assume that $g$ is a $\mathcal{P}\otimes \mathcal{B}(\mathbb{R}_0)$-measurable function such that $\int_0^T\int_{\mathbb{R}_0}\vert g(s,z) \vert^2\nu(dz)ds<\infty$ $\pr$-a.s. for all $T\geq 0$. Then, for any $p\geq 2$, there exists a constant $C_p >0$ such that
	\begin{align*}
		&\mathbb{E}\left[\sup_{0\leq t \leq T}\left\vert \int_0^t\int_{\mathbb{R}_0} g(s,z) \widetilde{N}(d s, d z)\right\vert^p\right]\\
		&\leq C_p \left(\mathbb{E}\left[\left(\int_0^T\int_{\mathbb{R}_0} \vert g(s,z) \vert^2 \nu(dz)ds\right)^{\frac{p}{2}}\right]+\mathbb{E}\left[\int_0^T\int_{\mathbb{R}_0} \vert g(s,z) \vert^p \nu(dz)ds\right]\right).
	\end{align*}
	Moreover, for any $1\leq p<2$, there exists a constant $C_p >0$ such that
	\begin{align*}
		\mathbb{E}\left[\sup_{0\leq t \leq T}\left\vert \int_0^t\int_{\mathbb{R}_0} g(s,z) \widetilde{N}(d s, d z)\right\vert^p\right]\leq C_p \, \mathbb{E}\left[\left(\int_0^T\int_{\mathbb{R}_0} \vert g(s,z) \vert^2 \nu(dz)ds\right)^{\frac{p}{2}}\right].
	\end{align*}	
\end{Lem}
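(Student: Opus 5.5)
Since the statement is quoted from \cite{A59} and \cite{Zhu}, I will only sketch the standard argument.

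Set $M_t:=\int_0^t\int_{\mathbb{R}_0} g(s,z)\widetilde{N}(ds,dz)$. This is a càdlàg local martingale by the assumption $\int_0^T\int_{\mathbb{R}_0}|g|^2\nu(dz)ds<\infty$ a.s.

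\emph{Localization.} I introduce stopping times
\[
\theta_n:=\inf\Bigl\{t:\ |M_t|\ge n \ \text{or}\ \int_0^t\int_{\mathbb{R}_0}|g|^2\nu(dz)ds\ge n\Bigr\}\wedge T,
\]
and I also truncate $g$ to $g\mathbf{1}_{\{|g|\le n\}}$. With this, every quantity below is finite. At the end I remove the localization by monotone convergence on the right-hand side and Fatou's lemma on the left-hand side. I therefore assume throughout that $\mathbb{E}[\sup_{t\le T}|M_t|^p]<\infty$.

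\emph{Case $p\ge 2$.} Doob's maximal inequality gives $\mathbb{E}\sup_{t\le T}|M_t|^p\le C_p\,\mathbb{E}|M_T|^p$. I apply Itô's formula to $|x|^p$, which is $C^2$ for $p\ge2$, along $M$. Taking expectations kills the compensated-Poisson martingale part and leaves
\[
\mathbb{E}|M_T|^p=\mathbb{E}\int_0^T\int_{\mathbb{R}_0}\Bigl(|M_{s-}+g|^p-|M_{s-}|^p-p|M_{s-}|^{p-2}M_{s-}g\Bigr)\nu(dz)ds .
\]
By Taylor's formula with integral remainder, the integrand is bounded by $C_p\bigl(|M_{s-}|^{p-2}|g|^2+|g|^p\bigr)$. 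The $|g|^p$ term is already one of the two terms on the right-hand side of the claim.

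For the other term I use the bound
\[
\mathbb{E}\Bigl[\sup_{s\le T}|M_s|^{p-2}\int_0^T\!\!\int_{\mathbb{R}_0}|g|^2\nu(dz)ds\Bigr]
\le \varepsilon\,\mathbb{E}\sup_{s\le T}|M_s|^p+C_{\varepsilon,p}\,\mathbb{E}\Bigl(\int_0^T\!\!\int_{\mathbb{R}_0}|g|^2\nu(dz)ds\Bigr)^{p/2}.
\]
This follows from Young's inequality with exponents $p/(p-2)$ and $p/2$ (for $p=2$ no splitting is needed). Choosing $\varepsilon$ small, I absorb the $\varepsilon$-term into the left-hand side, which is legitimate because that side is finite by the localization. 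This absorption, together with the justification that the stochastic integral in Itô's formula is a true martingale after localization, is the main technical step.

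\emph{Case $1\le p<2$.} I use the Burkholder--Davis--Gundy inequality for càdlàg local martingales,
\[
\mathbb{E}\sup_{t\le T}|M_t|^p\le C_p\,\mathbb{E}[M]_T^{p/2},
\]
where $[M]_T=\int_0^T\int_{\mathbb{R}_0}|g|^2N(ds,dz)$. The process $A_t=[M]_t$ is increasing and adapted. It is dominated in Lenglart's sense by its predictable compensator
\[
\widetilde A_t=\int_0^t\int_{\mathbb{R}_0}|g|^2\nu(dz)ds,
\]
since $\mathbb{E}A_\theta=\mathbb{E}\widetilde A_\theta$ for bounded stopping times $\theta$. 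Lenglart's inequality then gives, for $q=p/2\in[1/2,1)$,
\[
\mathbb{E}A_T^{q}\le \frac{2-q}{1-q}\,\mathbb{E}\widetilde A_T^{q}.
\]
Combining the two inequalities yields the second estimate of the statement.
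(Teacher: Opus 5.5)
The paper does not prove this lemma; it quotes it from Applebaum's Theorem 4.4.23 (Kunita's inequality) and Zhu's Proposition 2.2. Your sketch is the standard argument behind those results (It\^o's formula for $|x|^p$ with the Taylor bound, Doob and Young-absorption for $p\ge 2$, and BDG combined with Lenglart's inequality for $1\le p<2$), and it is correct.

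A few details need tightening:
\begin{itemize}
\item Define $\theta_n$ through the integral of the truncated integrand $g\mathbf{1}_{\{|g|\le n\}}$, not the untruncated $M$, so that the stopped process is bounded by $2n$.
\item To remove the truncation you need $\sup_{t\le T}|M^{(n)}_t-M_t|\to 0$ in probability. Fatou on the left-hand side requires a.s.\ convergence along a subsequence, which monotonicity does not give.
\item The identities $[M]_T=\int_0^T\!\int_{\mathbb{R}_0}|g|^2N(ds,dz)$ and $\mathbb{E}A_\theta=\mathbb{E}\widetilde A_\theta$ require $g$ to be predictable. A merely progressive $g$ should first be replaced by a predictable version, which leaves the right-hand side unchanged.
\end{itemize}
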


\section{Strong convergence rate of Euler-Maruyama approximations in temporal-spatial H\"older norms}
\label{mainresult}
Throughout this section, we always assume that $\mu, \sigma, \gamma \in C^2(\mathbb{R}, \mathbb{R})$. Then, it follows from Lemma 3.3 in \cite{HN22} that there exist positive constants $b$ and $c$,   such that  
\begin{enumerate}
\item[\bf A1.] $\max_{\xi \in \{\mu, \sigma, \gamma\}} \left|(\xi(x)-\xi(y))-(\xi(\widetilde{x})-\xi(\widetilde{y}))\right|\leq c|(x-y)- (\widetilde{x}- \widetilde{y})|+ b \dfrac{|x-y|+|\widetilde{x}-\widetilde{y}|}{2}|x-\widetilde{x}|.$
\end{enumerate} 
% In the following, to easy our notation, we will suppose that $b,c \in [1; +\infty)$. 
%Assume that $p\geq 2$, $c > 1$, $\mu, \sigma, \gamma \in C(\mathbb{R}, \mathbb{R})$ and $V\in C^2(\mathbb{R}, [1, \infty))$. 

Let consider a function $V$ defined by 
$$V(x) =  2^{p} \left[ 1+ \left(|\mu(0)|+ |\sigma(0)|+ |\gamma(0)| \sqrt{m_2} \right)^2 + c^2 x^2 (1+\sqrt{m_2})^2 \right]^{p/2},$$
for some constant $p \geq 2$. 
It is straightforward to see that $V\in C^2(\mathbb{R}, [1, \infty))$ and there exist a positive constant $ \overline{c} $ such that for all $x, y, \widetilde{x}, \widetilde{y} \in \bR$,
\begin{enumerate}
\item[\bf A2.] $|\mu(0)|+|\sigma(0)|+c |x|+\left(|\gamma(0)|+c |x|\right) \sqrt{m_2} \leq \left(V(x)\right)^{1/p}.$
\item[\bf A3.] $|V'(x)| \leq \overline{c} \left(V(x)\right)^{1-1/p}.$
\item[\bf A4.] $|V''(x)| \leq \overline{c} \left(V(x)\right)^{1-2/p}.$
\item[\bf A5.]   
$\int_{\bR_0} \left[ V(y+\gamma(x)z)-V(y)-V'(y)\gamma(x)z\right] \nu (dz)  \le \dfrac{1}{2} \overline{c}  \left(V(x)+ V(y)\right).$
\end{enumerate} 
\begin{Rem} %\label{Lip}
It follows from {\bf A1} that for all $x,y \in \mathbb{R}$, 
$$|\mu(x)-\mu(y)| \vee |\sigma(x) - \sigma(y)| \vee |\gamma(x) - \gamma(y)| \leq c|x-y|,$$
and 
$$|\mu(x)|\leq |\mu(0)|+c|x|\leq \left(V(x)\right)^{1/p}, \quad  |\sigma(x)| \leq |\sigma(0)|+c|x|\leq \left(V(x)\right)^{1/p}, \quad |\gamma(x)|\sqrt{m_2} \leq \left(V(x)\right)^{1/p}.$$ 
\label{Rem1}
\end{Rem}

%\textcolor{black}{\it Definition of Euler-Maruyama approximations of the solution to equation \eqref{eqn1}:} 

Let $\iota :[0, T]\to [0, T]$ satisfy that $\iota(t)=t$, for all $t\in [0, T]$.  Let $\widetilde{\mathbb{S}} = \mathbb{S}\cup\{\iota \}$. We suppose that  $|\iota| =0$. 

\begin{Lem}
\label{Lem3.1} 
 For $\delta \in \widetilde{\mathbb{S}},$ $s\in [0,T],$ $t\in [s, T],$ $\widetilde{t} \in [t, T],$ $x, \widetilde{x}\in\mathbb{R}$, let $(X^{\delta,x}_{s,t})_{t\in[s,T]}:[s,T]  \times\Omega\to\mathbb{R} $ 
be an $(\mf_t)_{t\in[s,T]}$-adapted stochastic process with c\`adl\`ag  sample paths satisfying equation \eqref{y01}. Let $(a_r, b_r, g_r) = \theta (\psi_r)-\theta(\widetilde{\psi}_r)$ with $\theta = (\mu, \sigma, \gamma)$, $r\in [t, \widetilde{t}]$ and 
 \begin{align*}
     (\psi_r, \widetilde{\psi}_r) = \Big\{(X_r, Y_r), (X_{s,\max\{s,\delta(r)\}}^{\delta,x}, X_{s,\max\{s,\delta(r)\}}^{\delta,\widetilde{x}}), (X^{\iota ,x}_{s,r}, X_{s,\max\{s,\delta(r)\}}^{\delta,x})\Big\}.
     \end{align*}
     Then, for all $p\ge 2,$ there exists a positive constant $C_p=C_p(T, m_2, m_p)$ such that 
 \begin{align*}
 %\label{a_r}
 &\left[\mathbb{E} \left[\left| \int_t^{\widetilde{t}} a_r dr \right|^p \right]\right]^{1/p} + \left[\mathbb{E} \left[\left| \int_t^{\widetilde{t}} b_r dW_r \right|^p \right]\right]^{1/p} + \left[\mathbb{E}\left[ \left| \int_t^{\widetilde{t}} \int _{\bR_0} g_{r-} z \widetilde{N}(dr, dz) \right|^p \right]\right]^{1/p} \notag  \\
  & \leq C_p |\widetilde{t} -t|^{1/p} \sup_{r\in [t, \widetilde{t}]} \max \left\{ \left [\mathbb{E} \left[\left| a_r \right|^p\right]\right]^{1/p}, \left [\mathbb{E} \left[\left| b_r \right|^p\right]\right]^{1/p}, \left [\mathbb{E} \left[\left| g_r \right|^p\right]\right]^{1/p} \right\}.
  \end{align*}
  % \textcolor{red}{Hang so $C_p$ o dau ra ???}
\end{Lem}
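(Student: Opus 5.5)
The plan is to bound the three terms on the left-hand side separately. In each case the goal is to reduce to $\int_t^{\widetilde t}\mathbb{E}[|h_r|^p]\,dr$, with $h\in\{a,b,g\}$, and then use
\[
\int_t^{\widetilde t}\mathbb{E}[|h_r|^p]\,dr\le |\widetilde t-t|\,\sup_{r\in[t,\widetilde t]}\mathbb{E}[|h_r|^p].
\]
Taking $p$-th roots turns this into the factor $|\widetilde t-t|^{1/p}$ times the supremum in the statement. We will write $M:=\sup_{r\in[t,\widetilde t]}\max\{\|a_r\|_{L^p},\|b_r\|_{L^p},\|g_r\|_{L^p}\}$. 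The particular choice of $(\psi,\widetilde\psi)$ plays no role; only measurability and adaptedness matter. All three choices are adapted c\`adl\`ag processes (the grid-evaluated ones are piecewise constant), so $a,b,g$ are progressively measurable and the stochastic integrals make sense. If $M=\infty$ there is nothing to prove, so assume $M<\infty$.

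\emph{Drift term.} By H\"older's inequality, $|\int_t^{\widetilde t}a_r\,dr|^p\le |\widetilde t-t|^{p-1}\int_t^{\widetilde t}|a_r|^p\,dr$. Taking expectations gives $\mathbb{E}|\int_t^{\widetilde t} a_r dr|^p\le |\widetilde t-t|^p M^p\le T^{p-1}|\widetilde t-t| M^p$, so this term is bounded by $T^{1-1/p}|\widetilde t-t|^{1/p}M$.

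\emph{Brownian term.} We apply the Burkholder--Davis--Gundy inequality and then H\"older's inequality on the quadratic variation:
\[
\mathbb{E}\Bigl|\int_t^{\widetilde t} b_r\,dW_r\Bigr|^p\le C_p\,\mathbb{E}\Bigl(\int_t^{\widetilde t}b_r^2\,dr\Bigr)^{p/2}\le C_p|\widetilde t-t|^{p/2-1}\int_t^{\widetilde t}\mathbb{E}|b_r|^p\,dr .
\]
This is at most $C_pT^{p/2-1}|\widetilde t-t|M^p$, which is of the required form.

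\emph{Jump term.} This is the one point where some care is needed. We apply Lemma \ref{BDGjump} with $g(r,z)=g_{r-}z$; integrating over $[t,\widetilde t]$ is just a shift of time. Since $g_{r-}=g_r$ for all but countably many $r$, the $dr$-integrals are unaffected. The first term of Lemma \ref{BDGjump} is $\mathbb{E}(m_2\int_t^{\widetilde t}g_r^2dr)^{p/2}$. By the same H\"older step as for the Brownian term, it is at most $m_2^{p/2}T^{p/2-1}|\widetilde t-t|M^p$. The second term is $m_p\int_t^{\widetilde t}\mathbb{E}|g_r|^p\,dr\le m_p|\widetilde t-t|M^p$. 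Note that this second term gives exactly order $|\widetilde t-t|$, not $|\widetilde t-t|^{p/2}$. This is why the statement only claims the exponent $1/p$, and why the constant depends on $m_p$, which must be finite; I would add that as an implicit assumption.

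Summing the three bounds and taking $p$-th roots gives the claim with $C_p=C_p(T,m_2,m_p)$. The only point requiring care is the jump term: handling the left limit $g_{r-}$ and verifying the integrability hypothesis of Lemma \ref{BDGjump}. That hypothesis holds a.s. because $\mathbb{E}\int_t^{\widetilde t}g_r^2\,dr<\infty$, which follows from $M<\infty$.
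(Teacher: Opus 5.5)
Your proposal is correct and follows essentially the same route as the paper. The paper likewise uses H\"older for the drift, BDG plus H\"older for the Brownian integral, and Lemma \ref{BDGjump} (with both the $m_2^{p/2}$ and $m_p$ terms) for the jump integral. It then bounds $\int_t^{\widetilde t}\mathbb{E}|h_r|^p\,dr$ by $|\widetilde t-t|$ times the supremum and absorbs the remaining powers of $|\widetilde t-t|$ into a constant depending on $T$, $m_2$, $m_p$. Your additional remarks are correct details that the paper leaves unstated: the treatment of $g_{r-}$, the integrability hypothesis of Lemma \ref{BDGjump}, and the implicit requirement $m_p<\infty$.
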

 \begin{proof} 
 %\textcolor{red}{Su dung phuong trinh vi phan ngau nhien nao ?}
 Using H\"older's inequality and Burkholder-Davis-Gundy inequalities of the It\^o stochastic integral with respect to Brownian motion and compensated Poisson random measure (see Lemma \ref{BDGjump}), we get
\begin{align*} 
%\label{a_r}
 &\left[\mathbb{E} \left[\left| \int_t^{\widetilde{t}} a_r dr \right|^p \right]\right]^{1/p} + \left[\mathbb{E} \left[\left| \int_t^{\widetilde{t}} b_r dW_r \right|^p \right]\right]^{1/p} + \left[\mathbb{E}\left[ \left| \int_t^{\widetilde{t}} \int _{\bR_0} g_{r-} z \widetilde{N}(dr, dz) \right|^p \right]\right]^{1/p} \notag  \\
 & \leq \left(|\widetilde{t} -t|^{p-1} \int_t^{\widetilde{t}} \mathbb{E} [|a_r|^p] dr\right)^{1/p}  + \left( C_p |\widetilde{t} -t|^{\frac{p}{2}-1} \int_t^{\widetilde{t}} \mathbb{E} [|b_r|^p] dr\right)^{1/p}  \notag  \\
 & \quad + \left( C_p \left[|\widetilde{t} -t|^{\frac{p}{2}-1} m_2^{p/2} +m_p \right] \int_t^{\widetilde{t}} \mathbb{E} [|g_r|^p] dr\right)^{1/p}  \notag  \\
 & \leq |\widetilde{t} -t|\sup_{r\in [t, \widetilde{t}]} \left [\mathbb{E} \left[\left| a_r \right|^p\right]\right]^{1/p} + C_p|\widetilde{t} -t|^{1/2} \sup_{r\in [t, \widetilde{t}]} \left [\mathbb{E} \left[\left| b_r \right|^p\right]\right]^{1/p} \notag\\
 & \quad + C_p (|\widetilde{t}-t|^{p/2 -1} m_2^{p/2} + m_p)^{1/p}|\widetilde{t} -t|^{1/p} \sup_{r\in [t, \widetilde{t}]} \left [\mathbb{E} \left[\left| g_r \right|^p\right]\right]^{1/p} \notag \\
 & = |\widetilde{t} -t|^{1/p} \left\{ |\widetilde{t} -t|^{1-\frac{1}{p}} \sup_{r\in [t, \widetilde{t}]} \left [\mathbb{E} \left[\left| a_r \right|^p\right]\right]^{1/p} + C_p|\widetilde{t} -t|^{\frac{1}{2}-\frac{1}{p}} \sup_{r\in [t, \widetilde{t}]} \left [\mathbb{E} \left[\left| b_r \right|^p\right]\right]^{1/p} \right.\notag\\
 & \quad \left. + C_p (|\widetilde{t}-t|^{p/2 -1} m_2^{p/2} + m_p)^{1/p} \sup_{r\in [t, \widetilde{t}]} \left [\mathbb{E} \left[\left| g_r \right|^p\right]\right]^{1/p}\right\} \notag\\
& \leq |\widetilde{t} -t|^{1/p} \left\{ T^{1-\frac{1}{p}} \sup_{r\in [t, \widetilde{t}]} \left [\mathbb{E} \left[\left| a_r \right|^p\right]\right]^{1/p} + C_p T^{\frac{1}{2}-\frac{1}{p}} \sup_{r\in [t, \widetilde{t}]} \left [\mathbb{E} \left[\left| b_r \right|^p\right]\right]^{1/p} \right.\notag\\
 & \quad \left. + C_p (T^{p/2 -1} m_2^{p/2} + m_p)^{1/p} \sup_{r\in [t, \widetilde{t}]} \left [\mathbb{E} \left[\left| g_r \right|^p\right]\right]^{1/p}\right\} \notag\\
 & \leq C_p |\widetilde{t} -t|^{1/p} \sup_{r\in [t, \widetilde{t}]} \max \left\{ \left [\mathbb{E} \left[\left| a_r \right|^p\right]\right]^{1/p}, \left [\mathbb{E} \left[\left| b_r \right|^p\right]\right]^{1/p}, \left [\mathbb{E} \left[\left|g_r \right|^p\right]\right]^{1/p} \right\},
\end{align*}
for a positive constant $C_p=C_p(T, m_2, m_p)$. Thus, the result follows.
\end{proof}
\begin{Lem}
\label{Lem3.2}
For $\delta \in \widetilde{\mathbb{S}},$ $s, \widetilde{s} \in [0,T],$ $t\in [s, T],$  $x \in\mathbb{R}$, let $(X^{\delta,x}_{s,t})_{t\in[s,T]}:[s,T]  \times\Omega\to\mathbb{R} $ 
be an $(\mf_t)_{t\in[s,T]}$-adapted stochastic process with c\`adl\`ag  sample paths satisfying equation \eqref{y01}. Then, for all $\mathbb{X}=(\mathbb{X}_r)_{r\in [s, t]}, \mathbb{Y}=(\mathbb{Y}_r)_{r\in [s, t]} \in \{ (x)_{r\in [s, t]}, (X_{\widetilde{s},\max\{\widetilde{s},\delta(r)\}}^{\delta,x})_{r\in [s, t]}, (X_{\widetilde{s},\max\{\widetilde{s},\delta (r) \}}^{\iota,x})_{r\in [s, t]}  \}$ and $p\ge 2,$ there exists a positive constant $C_p=C_p(T,m_2,m_p)$ such that
\begin{align*}
&\left[\mathbb{E} \left[\left| \int_s^t (\mu(\mathbb{X}_r)-\mu(\mathbb{Y}_r)) dr \right|^p \right]\right]^{1/p} + \left[\mathbb{E} \left[\left| \int_s^t (\sigma(\mathbb{X}_r)-\sigma(\mathbb{Y}_r)) dW_r \right|^p \right]\right]^{1/p} \notag \\ 
& \quad + \left[\mathbb{E} \left[ \left| \int_s^t \int_{\bR_0} (\gamma(\mathbb{X}_{r-})-\gamma(\mathbb{Y}_{r-})) z \widetilde{N}(dr, dz) \right|^p \right]\right]^{1/p} \notag \\
 & \leq c  C_p  |t-s|^{1/p} \sup_{r\in [s, t]} \left[ \mathbb{E} \left[\left| \mathbb{X}_r-\mathbb{Y}_r \right|^p \right]\right]^{1/p},
\end{align*}
where the constant $c$ is given in {\bf A1}.
\end{Lem}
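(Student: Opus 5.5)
The plan is to reduce Lemma \ref{Lem3.2} to Lemma \ref{Lem3.1}, or rather to repeat its argument, using the Lipschitz bound of Remark \ref{Rem1}. Set $a_r=\mu(\mathbb{X}_r)-\mu(\mathbb{Y}_r)$, $b_r=\sigma(\mathbb{X}_r)-\sigma(\mathbb{Y}_r)$ and $g_r=\gamma(\mathbb{X}_r)-\gamma(\mathbb{Y}_r)$ for $r\in[s,t]$. The three integrals then have exactly the form treated in Lemma \ref{Lem3.1}, on the interval $[s,t]$ instead of $[t,\widetilde t]$.

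The proof of Lemma \ref{Lem3.1} never used the particular pairs $(\psi_r,\widetilde\psi_r)$. It used only three things: progressive measurability of the integrands, H\"older's inequality for the $dr$-integral, and the Burkholder-Davis-Gundy inequality for the $dW$-integral together with Lemma \ref{BDGjump} for the $\widetilde N$-integral with $g(r,z)=g_{r-}z$. For the admissible $\mathbb{X},\mathbb{Y}$, namely constants or Euler-type processes frozen at grid points $\max\{\widetilde s,\delta(r)\}$, the integrands are adapted, and the left-limit versions are predictable. So the same chain of inequalities applies verbatim. The jump term gives
\[
\mathbb{E}\Bigl[\Bigl|\int_s^t\int_{\bR_0} g_{r-}z\,\widetilde N(dr,dz)\Bigr|^p\Bigr]\le C_p\bigl(|t-s|^{p/2-1}m_2^{p/2}+m_p\bigr)\int_s^t\mathbb{E}[|g_r|^p]\,dr .
\]
The drift term is bounded by $|t-s|^{p-1}\int_s^t\mathbb{E}[|a_r|^p]dr$, and the Brownian term by $C_p|t-s|^{p/2-1}\int_s^t\mathbb{E}[|b_r|^p]dr$. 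Taking $p$-th roots and bounding the extra powers of $|t-s|$ by powers of $T$ yields
\[
C_p|t-s|^{1/p}\sup_{r\in[s,t]}\max\bigl\{\|a_r\|_{L^p},\|b_r\|_{L^p},\|g_r\|_{L^p}\bigr\},
\]
with $C_p=C_p(T,m_2,m_p)$.

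Finally, apply Remark \ref{Rem1}: $|a_r|\vee|b_r|\vee|g_r|\le c|\mathbb{X}_r-\mathbb{Y}_r|$ pointwise. Hence each $L^p$-norm is at most $c\,[\mathbb{E}|\mathbb{X}_r-\mathbb{Y}_r|^p]^{1/p}$, which gives the claimed bound with the factor $c$ pulled out.

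The only point needing care is measurability and integrability. We must confirm that the processes in the admissible set are adapted, so that the stochastic integrals are well defined. We also need the right-hand side to be finite, or else the inequality is trivial. In addition, the case $s>t$ or $\widetilde s>r$ makes $X^{\delta,x}_{\widetilde s,\cdot}$ only defined after $\widetilde s$. I would handle this by noting that the statement implicitly presupposes these processes are defined on $[s,t]$, or by interpreting them as $x$ before time $\widetilde s$. Apart from this bookkeeping, the argument is a direct specialization of Lemma \ref{Lem3.1}.
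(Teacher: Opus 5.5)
Your proposal is correct and is essentially the paper's proof. Both use H\"older for the $dr$-integral and BDG together with Lemma \ref{BDGjump} (with $g(r,z)=g_{r-}z$, which produces the factor $|t-s|^{p/2-1}m_2^{p/2}+m_p$), then the Lipschitz bound of Remark \ref{Rem1}, and finally bound the leftover powers of $|t-s|$ by powers of $T$. The only difference is that the paper applies the Lipschitz bound inside the time integrals before taking the supremum, rather than afterwards.

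Your adaptedness worry is moot. For $r<\widetilde s$ one has $\max\{\widetilde s,\delta(r)\}=\widetilde s$, so both admissible processes equal $x$ there. For $r\ge\widetilde s$ the time index satisfies $\max\{\widetilde s,\delta(r)\}\le r$.
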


\begin{proof}
    Using H\"older's inequality, Burkholder-Davis-Gundy inequalities of the It\^o stochastic integral with respect to Brownian motion and compensated Poisson random measure (see Lemma \ref{BDGjump}), and the Lipschitz property of the coefficients $\mu$, $\sigma$, $\gamma$ (see  Remark $\ref{Rem1}$), we obtain
\begin{align*} 
%\label{X_r, Y_r}
 &\left[\mathbb{E} \left[\left| \int_s^t (\mu(\mathbb{X}_r)-\mu(\mathbb{Y}_r)) dr \right|^p \right]\right]^{1/p} + \left[\mathbb{E} \left[\left| \int_s^t (\sigma(\mathbb{X}_r)-\sigma(\mathbb{Y}_r)) dW_r \right|^p \right]\right]^{1/p}  \\
 & \quad + \left[\mathbb{E} \left[ \left| \int_s^t \int_{\bR_0} (\gamma(\mathbb{X}_{r-})-\gamma(\mathbb{Y}_{r-})) z \widetilde{N}(dr, dz) \right|^p \right]\right]^{1/p} \notag \\
 %& \leq \left[\sqrt{T} + 1+ \sqrt{m_2} \right] \left[ \int_s^t \max_{\xi \in \{ \mu, \sigma, \gamma\}} \mathbb{E} \left[\left( \xi(\mathbb{X}_r)-\xi(\mathbb{Y}_r)  \right)^2 \right] dr \right]^{1/2} \notag \\
 %& \leq c \left\{ \left[\mathbb{E} \left[\left| \int_s^t (\mathbb{X}_r-\mathbb{Y}_r) dr \right|^p \right]\right]^{1/p} + \left[\mathbb{E} \left[\left| \int_s^t (\mathbb{X}_r-\mathbb{Y}_r) dW_r \right|^p \right]\right]^{1/p} \right.  \notag \\
% & \quad \left. + \left[\mathbb{E} \left[ \left| \int_s^t \int_{\bR_0} (\mathbb{X}_r-\mathbb{Y}_r) z \widetilde{N}(dr, dz) \right|^p \right]\right]^{1/p} \right \} \notag \\
& \le \left( |t-s|^{p-1} \int_s^t \mathbb{E} \left[ \left| \mu(\mathbb{X}_r)-\mu(\mathbb{Y}_r) \right|^p\right] dr \right)^{1/p} + \left( C_{p} |t-s|^{\frac{p}{2}-1} \int_s^t \mathbb{E} \left[ \left| \sigma(\mathbb{X}_r)-\sigma(\mathbb{Y}_r) \right|^p\right] dr \right)^{1/p} \notag\\
& \quad  + \left( C_{p} (|t-s|^{\frac{p}{2}-1}m_2^{\frac{p}{2}}+m_p ) \int_s^t \mathbb{E} \left[ \left| \gamma( \mathbb{X}_r) - \gamma (\mathbb{Y}_r) \right|^p\right] dr \right)^{1/p} \notag\\
& \le c \left( |t-s|^{p-1} \int_s^t \mathbb{E} \left[ \left| \mathbb{X}_r-\mathbb{Y}_r \right|^p\right] dr \right)^{1/p} + c \left( C_{p} |t-s|^{\frac{p}{2}-1} \int_s^t \mathbb{E} \left[ \left| \mathbb{X}_r-\mathbb{Y}_r \right|^p\right] dr \right)^{1/p} \notag\\
& \quad  + c \left( C_{p} (|t-s|^{\frac{p}{2}-1}m_2^{\frac{p}{2}}+m_p ) \int_s^t \mathbb{E} \left[ \left|  \mathbb{X}_r - \mathbb{Y}_r \right|^p\right] dr \right)^{1/p} \notag\\
 & \le c \left\{ |t-s|\sup_{r\in [s, t]} \left[ \mathbb{E} \left[ \left| \mathbb{X}_r-\mathbb{Y}_r \right|^p \right]\right]^{1/p} + C_{p}^{1/p}|t-s|^{1/2} \sup_{r\in [s, t]}  \left[ \mathbb{E} \left[ \left| \mathbb{X}_r-\mathbb{Y}_r \right|^p \right]\right]^{1/p} \right. \notag \\
 & \quad \left. +  C_{p}^{1/p} (|t-s|^{\frac{p}{2}-1}m_2^{\frac{p}{2}}+m_p )^{1/p}|t-s|^{1/p} \sup_{r\in [s, t]} \left[ \mathbb{E} \left[ \left| \mathbb{X}_r-\mathbb{Y}_r \right|^p \right]\right]^{1/p} \right\} \notag\\
  &= c |t-s|^{1/p} \left\{ |t-s|^{1-\frac{1}{p}}\sup_{r\in [s, t]} \left[ \mathbb{E} \left[ \left| \mathbb{X}_r-\mathbb{Y}_r \right|^p \right]\right]^{1/p} + C_{p}^{1/p}|t-s|^{\frac{1}{2}-\frac{1}{p}} \sup_{r\in [s, t]}  \left[ \mathbb{E} \left[ \left| \mathbb{X}_r-\mathbb{Y}_r \right|^p \right]\right]^{1/p} \right. \notag \\
 & \quad \left. +   C_{p}^{1/p} (|t-s|^{\frac{p}{2}-1}m_2^{\frac{p}{2}}+m_p )^{1/p} \sup_{r\in [s, t]} \left[ \mathbb{E} \left[ \left| \mathbb{X}_r-\mathbb{Y}_r \right|^p \right]\right]^{1/p} \right\} \notag\\
  &\leq c |t-s|^{1/p} \left\{ T^{1-\frac{1}{p}}\sup_{r\in [s, t]} \left[ \mathbb{E} \left[ \left| \mathbb{X}_r-\mathbb{Y}_r \right|^p \right]\right]^{1/p} + C_{p}^{1/p} T^{\frac{1}{2}-\frac{1}{p}} \sup_{r\in [s, t]}  \left[ \mathbb{E} \left[ \left| \mathbb{X}_r-\mathbb{Y}_r \right|^p \right]\right]^{1/p} \right. \notag \\
 & \quad \left. +   C_{p}^{1/p} (T^{\frac{p}{2}-1}m_2^{\frac{p}{2}}+m_p )^{1/p} \sup_{r\in [s, t]} \left[ \mathbb{E} \left[ \left| \mathbb{X}_r-\mathbb{Y}_r \right|^p \right]\right]^{1/p} \right\} \notag\\
& \le c  C_p  |t-s|^{1/p} \sup_{r\in [s, t]} \left[ \mathbb{E} \left[ \left| \mathbb{X}_r-\mathbb{Y}_r \right|^p \right]\right]^{1/p},
\end{align*}
for a positive constant $C_p=C_p(T, m_2, m_p)$. This finishes the proof.
\end{proof}

\begin{Lem}
\label{Lem3.3}
For $s\in [0, T],$ $t, \widetilde{t} \in [s, T],$ $ \delta \in \widetilde{\mathbb{S}}$ and $ x\in \bR,$ let $(X^{\delta,x}_{s,t})_{t\in[s,T]}:[s,T]  \times\Omega\to\mathbb{R} $ 
be an $(\mf_t)_{t\in[s,T]}$-adapted stochastic process with c\`adl\`ag  sample paths satisfying equation \eqref{y01}. 
%Let $V\in C^2(\mathbb{R}, [1, \infty))$ satisfy statements {\bf A2-A5}. 
Then, 
\begin{enumerate}
   \item [\textup{(i)}] we have
\begin{align*}
\mathbb{E}\left[V(X^{\delta, x}_{s,t})\right] \leq e^{2.5 \overline{c}|t-s|}V(x).
\end{align*} 
% \begin{align*}
% \mathbb{E}\left[(V(X^{i, x}_{s,t}))^n\right] \leq e^{2.5 \overline{c}|t-s|}V(x).
% \end{align*} 
\item [\textup{(ii)}] for all $p\ge 2,$ there exists a positive constant $C_p=C_p(T, m_2, m_p)$ such that \begin{align*}
\left[\mathbb{E}\left[\left|X^{\delta, x}_{s,\widetilde{t}}-X^{\delta, x}_{s,t}\right|^p\right]\right]^{1/p} 
 \leq C_p |\widetilde{t} -t|^{1/p} (e^{2.5\overline{c}T}V(x))^{1/p}.
\end{align*} 
\end{enumerate}
Here, the constant $\overline{c}$ is given in {\bf A3, A4, A5}. 
 \end{Lem}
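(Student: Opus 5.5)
For part (i) the plan is to apply the Lyapunov-type estimates of Section~\ref{Preli} to the Euler-Maruyama process. The catch is that \eqref{y01} is not a Markovian SDE in $X^{\delta,x}_{s,t}$ alone, because the coefficients are frozen at the grid point $\max\{s,\delta(r)\}$. For that reason I would not apply Lemma~\ref{Lem2.2} directly. Instead I would use It\^o's formula for $V(X^{\delta,x}_{s,t})$ with a localizing sequence of stopping times, exactly as in the proofs of Lemmas~\ref{Lem2.2} and \ref{Lem2.4}. Write $y=X^{\delta,x}_{s,r}$ and $\hat y=X^{\delta,x}_{s,\max\{s,\delta(r)\}}$. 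The drift of $V(X^{\delta,x}_{s,r})$ is then
\[
V'(y)\mu(\hat y)+\tfrac12 V''(y)\sigma^2(\hat y)+\int_{\bR_0}\bigl[V(y+\gamma(\hat y)z)-V(y)-V'(y)\gamma(\hat y)z\bigr]\nu(dz).
\]
Using {\bf A2}, {\bf A3} and Remark~\ref{Rem1}, $|V'(y)\mu(\hat y)|\le \overline{c}V(y)^{1-1/p}V(\hat y)^{1/p}$. By Young's inequality this is at most $\overline{c}\bigl(\tfrac{p-1}{p}V(y)+\tfrac1p V(\hat y)\bigr)$. Similarly, {\bf A4} gives $\tfrac12|V''(y)|\sigma^2(\hat y)\le \tfrac12\overline{c}V(y)^{1-2/p}V(\hat y)^{2/p}$, which is at most $\tfrac12\overline{c}(V(y)+V(\hat y))$. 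The jump term is bounded by $\tfrac12\overline{c}(V(\hat y)+V(y))$ directly by {\bf A5}. Summing, the drift is at most $\overline{c}\,(a V(y)+bV(\hat y))$ with $a+b\le 2.5$; crudely, it is at most $2.5\,\overline{c}\max\{V(y),V(\hat y)\}$.

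After localization and taking expectations, the function $\phi(t):=\sup_{r\in[s,t]}\mathbb{E}[V(X^{\delta,x}_{s,r}\wedge\text{stopped})]$ should satisfy $\phi(t)\le V(x)+2.5\overline{c}\int_s^t\phi(r)\,dr$. The key point is that $\max\{s,\delta(r)\}\le r$, so $\mathbb{E}[V(\hat y)]\le\phi(r)$. Gronwall (or Lemma~\ref{Cor2.1} with $p=1$) then gives $\phi(t)\le e^{2.5\overline{c}(t-s)}V(x)$, and Fatou's lemma removes the localization. The case $\delta=\iota$ is just the true SDE and is covered by the same computation with $\hat y=y$.

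The main obstacle I expect is the localization with the frozen argument. The stopped expectation of $V(\hat y)$ must be controlled by the running supremum, so I would define the stopping times via $\sup_{r\le t}V(X^{\delta,x}_{s,r})\ge n$, which also controls the grid values. I also need integrability of the martingale parts. This holds before the stopping time because $V(y+\gamma z)-V(y)$ is bounded via the mean value theorem by $\overline{c}V^{1-1/p}(\cdot)|\gamma(\hat y)||z|$, and $m_2<\infty$.

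For part (ii), write $X^{\delta,x}_{s,\widetilde t}-X^{\delta,x}_{s,t}$ as the sum of the three integrals over $[t,\widetilde t]$ with integrands $\mu(\hat y_r)$, $\sigma(\hat y_r)$ and $\gamma(\hat y_{r-})$. I would then repeat the H\"older/BDG estimate of Lemma~\ref{Lem3.1}, using Lemma~\ref{BDGjump} for the jump part, now with integrands $\theta(\hat y_r)$ rather than differences. This gives a bound $C_p|\widetilde t-t|^{1/p}\sup_r\max_\theta(\mathbb{E}|\theta(\hat y_r)|^p)^{1/p}$. By Remark~\ref{Rem1}, $|\theta(\hat y_r)|^p\le V(\hat y_r)$, up to the factor $m_2^{-p/2}$ for $\gamma$, which is absorbed into $C_p$. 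Part (i) then gives $\mathbb{E}[V(\hat y_r)]\le e^{2.5\overline{c}T}V(x)$, which yields the claim.
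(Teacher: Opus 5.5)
Part (ii) is the paper's argument: triangle inequality, the H\"older/BDG estimate of Lemma~\ref{Lem3.1}, Remark~\ref{Rem1}, then part (i).

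For part (i) you take a genuinely different route. The paper never applies It\^o's formula to the Euler process with its delayed argument. It proceeds in three steps:
\begin{enumerate}
\item It treats $\delta=\iota$ with Lemma~\ref{Lem2.2}.
\item It treats a single frozen-coefficient step $\xi_r=x+\mu(x)r+\sigma(x)W_r+\gamma(x)Z_r$ with the Gronwall--Lyapunov Lemma~\ref{Lem2.4}, using the deterministic term $\beta=(\frac{2\overline c}{p}+\frac{\overline c}{2})V(x)$. This gives $\mathbb{E}[V(\xi_r)]\le e^{2.5\overline c r}V(x)$.
\item It chains the steps by conditioning on $\mathcal{F}_{\max\{s,\delta(t)\}}$ (disintegration) and inducting over the grid.
\end{enumerate}
Your single It\^o computation, closed by a Gronwall inequality for $\phi(t)=\sup_{r\in[s,t]}\mathbb{E}[V(X^{\delta,x}_{s,r})]$, uses only $\max\{s,\delta(r)\}\le r$. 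It covers $\iota$ and every grid at once, indeed any $\delta$ with $\delta(r)\le r$. It dispenses with the Markov/disintegration step and with Lemma~\ref{Lem2.4}. The price is the localization you flagged. Your running-supremum stopping time handles it, because $r<\tau_n$ forces $\max\{s,\delta(r)\}<\tau_n$.

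Two details need repair.

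First, your bounds as written do not give $2.5$. With $\frac12\overline c(V(y)+V(\hat y))$ for the second-order term, the coefficients are $a=2-\frac1p$ and $b=1+\frac1p$, so $a+b=3$. Apply Young's inequality to that term as well, $V(y)^{1-2/p}V(\hat y)^{2/p}\le(1-\frac2p)V(y)+\frac2pV(\hat y)$. This gives $a=2-\frac2p$, $b=\frac12+\frac2p$, $a+b=2.5$, which is exactly the split the paper uses in its one-step computation. Only this linear form survives taking expectations. The ``crude'' bound $2.5\overline c\max\{V(y),V(\hat y)\}$ would not give $2.5\overline c\,\phi(r)$, since the expectation of a maximum is not the maximum of the expectations.

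Second, $m_2<\infty$ alone does not make the compensated jump integral a martingale up to $\tau_n$. The intermediate point in your mean value bound is not controlled by $\tau_n$ when $|z|$ is large. Handle the jumps with $|z|>1$ in $L^1$, which requires $\int_{|z|>1}|z|^p\nu(dz)<\infty$. This condition is already implicit in {\bf A5} and in the dependence of the constants on $m_p$.
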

\begin{proof}
 $\textup{(i)}$ Using $\bf A3, A4, A5$ with $y=x$ and Remark $\ref{Rem1}$, we have
\begin{align*}
&V'(x)\mu (x) + \dfrac{1}{2}V''(x) \sigma^2(x) + \int_{\bR_0}\left[V(x+\gamma(x)z)-V(x)-V'(x)\gamma(x)z\right] \nu (dz)    \notag \\
& \leq \overline{c}\left(V(x)\right)^{1-\frac{1}{p}} \left(V(x)\right)^{\frac{1}{p}} + \dfrac{1}{2} \overline{c} \left(V(x)\right)^{1-\frac{2}{p}} \left(V(x)\right)^{\frac{2}{p}} + \overline{c} V(x) \\
%& \leq \overline{c} V(x) +\dfrac{1}{2} \overline{c} V(x)+ \dfrac{1}{2} \gamma^2(x) \overline{c} \int_{\bR_0}    \left(V(x+\theta(x, \gamma(x)z) \gamma(x)z)\right)^{1-\frac{2}{p}} z^2 \nu(dz) \notag \\
%& ??? \leq \overline{c} V(x) +\dfrac{1}{2} \overline{c} V(x)+\dfrac{1}{2} \gamma^2(x) \overline{c} \left(V(x)\right)^{1-\frac{2}{p}} \int_{\bR_0}  z^2 \nu(dz) \notag \\
& = \overline{c} V(x) +\dfrac{1}{2} \overline{c} V(x)+ \overline{c} V(x) = 2.5 \overline{c} V(x).
\end{align*}
 Then, applying Lemma $\ref{Lem2.2}$ with{ \color{black} $\alpha = 2.5 \overline{c} $, $X_{\tau}=X^{\iota, x}_{s, t}$, $\tau=t$ and $V(t,x)=V(x)$,} we obtain
\begin{align}
    \mathbb{E}\left[V(X^{\iota, x}_{s, t})\right] \leq e^{2.5 \overline{c}(t-s)}V(x).
    \label{i1}
\end{align}
% We consider another process $X_{t}=x+ \mu (x)t + \sigma (x)W_t+\gamma(x)Z_t$. With this process, we have 
Next, using again $\bf A3, A4, A5,$ Remark $\ref{Rem1},$ and the inequality $a^{\lambda} b^{1-\lambda} \le \lambda a+(1-\lambda)b$, valid for all $a, b \in [0, \infty)$, $\lambda \in (0,1)$, we have  
% \begin{cases} 
% & \xi_t=x+\mu(x)t+\sigma(x)W_t+c(t)Z_t,\\ 
% &  \widehat{\mu}(t)=\mu(x) \\
% & \widehat{\sigma}(t) = \sigma(x)\\
% & \widehat{\gamma}(t)= \gamma(x), 
% \end{cases} we have
\begin{align*}
   & V'(y) \mu(x)+\dfrac{1}{2} V''(y) \sigma^2 (x) + \int_{\bR_0} \left[ V(y+\gamma(x)z)-V(y)-V'(y)\gamma(x)z\right] \nu (dz) \notag \\
   & \le |V'(y)| \, |\mu(x)|+\dfrac{1}{2} \left|V''(y) \right|\sigma^2 (x) +\dfrac{1}{2}  \overline{c} (V(x) + V(y)) \notag \\
   %& \leq \overline{c} \left(V(y)\right)^{1-\frac{1}{p}} |\mu (x)| +\dfrac{1}{2} \overline{c} \left(V(y)\right)^{1-\frac{2}{p}}  \sigma^2 (x) +\dfrac{1}{2} \overline{c}(V(x) + V(y)) \notag \\
&  \leq \overline{c} \left(V(y)\right)^{1-\frac{1}{p}} \left(V(x)\right)^{\frac{1}{p}}  +\dfrac{1}{2} \overline{c} \left(V(y)\right)^{1-\frac{2}{p}}  \left(V(x)\right)^{\frac{2}{p}}  +\dfrac{1}{2}  \overline{c} (V(x) + V(y)) \notag \\
&  \leq \overline{c}  \left[ \left(1-\dfrac{1}{p} \right) V(y)+ \dfrac{1}{p} V(x)\right] +\dfrac{1}{2} \overline{c}  \left[ \left(1-\dfrac{2}{p} \right) V(y)+ \dfrac{2}{p} V(x)\right] +\dfrac{1}{2}  \overline{c} (V(x) + V(y)) \notag \\
&=  \left(2  \overline{c} -\dfrac{2\overline{c}}{p}\right) V(y)+ \left(\dfrac{2\overline{c}}{p} +\dfrac{1}{2}\overline{c} \right)V(x).
    \end{align*}
Then for $\xi_s=x+\mu(x)s+\sigma(x)W_s+\gamma(x)Z_s,$ where $ \widehat{\mu}(\xi_s):=\mu(x),$ $\widehat{\sigma}(\xi_s) := \sigma(x),$ $ \widehat{\gamma}(\xi_s) := \gamma(x)$, we have 
\begin{align}
& V'(\xi_s) \widehat{\mu}(\xi_s)+\dfrac{1}{2} V''(\xi_s) \widehat{\sigma}^2 (\xi_s) + \int_{\bR_0} \left[ V(\xi_s +\widehat{\gamma}(\xi_s)z)-V(\xi_s)-V'(\xi_s) \widehat{\gamma}(\xi_s)z\right] \nu (dz) \notag \\
  & = V'(\xi_s) \mu(x)+\dfrac{1}{2} V''(\xi_s) \sigma^2 (x) + \int_{\bR_0} \left[ V(\xi_s +\gamma(x)z)-V(\xi_s)-V'(\xi_s)\gamma(x)z\right] \nu (dz) \notag \\
 &\le  \left(2  \overline{c} -\dfrac{2\overline{c}}{p}\right) V(\xi_s)+ \left(\dfrac{2\overline{c}}{p} +\dfrac{1}{2}\overline{c} \right)V(x). 
\end{align}  
Therefore, applying Lemma $\ref{Lem2.4}$ for  $\xi_s$, and the inequality $1+a \le e^a,$ valid for all $a \in [0, \infty),$ we obtain
\begin{align}
  \mathbb{E}\left[ V(x+ \mu (x)s + \sigma (x)W_s+\gamma(x)Z_s)\right] 
 &= \mathbb{E}\left[ V (\xi_s) \right] \notag \\ 
 &\leq e^{\left(2 \overline{c} -\frac{2\overline{c}}{p}\right)s} \left( V(x)+ \int_0^s \dfrac{\left(\frac{2\overline{c}}{p} +\frac{1}{2}\overline{c} \right)V(x)}{ \exp \int_0^u \left(2 \overline{c} -\frac{2\overline{c}}{p}\right)dr}du \right) \notag \\
 & \leq e^{\left(2 \overline{c} -\frac{2\overline{c}}{p}\right)s} V(x) \left( 1+\left(\frac{2\overline{c}}{p} +\frac{1}{2} \overline{c} \right)s \right) \notag\\
 & \leq e^{\left(2 \overline{c} -\frac{2\overline{c}}{p}\right)s} e^{\left(\frac{2\overline{c}}{p} +\frac{1}{2} \overline{c} \right)s} V(x)= e^{2.5 \overline{c}s} V(x),
 \label{i2}
\end{align}
where we have used the fact that $\exp \int_0^u \left(2 \overline{c} -\frac{2\overline{c}}{p}\right)dr \ge 1.$

For the rest of this paper, we will repeatedly  use the tower property of conditional expectation and the disintegration theorem (see \cite[Lemma 2.2]{HJKNW20}) as follows
\begin{align*}
% \label{i3}
   &\mathbb{E} \left[ V(X_{s,t}^{\delta,x})\right] =\mathbb{E} \left[\mathbb{E} \left[V(X_{s,t}^{\delta,x})\big| \mathcal{F}_{\max\{s,\delta(t)\}}\right] \right] \notag \\ 
&=\mathbb{E}\biggl[\mathbb{E} \Bigl[V\Bigl(z+
\mu(z)(t- \max\{s,\delta(t)\})+
\sigma(z)(W_{t} -W_{\max\{s,\delta(t)\}} ) \notag \\
&\qquad  +
\gamma (z)(Z_{t} -Z_{ \max\{s,\delta(t)\}} )\Bigr)\Bigr|_{z=X_{s,\max\{s,\delta(t)\}}^{\delta,x} } \Big|\mathcal{F}_{\max\{s,\delta(t)\}}\Bigr]\biggr]\\
&=\mathbb{E}\left[\mathbb{E} \Bigl[V \Bigl(z+
\mu(z)(t- \max\{s,\delta(t)\})+
\sigma(z)W_{t- \max\{s,\delta(t)\}} + \gamma (z)Z_{t- \max\{s,\delta(t)\}}  \Bigr) \Bigr]\Bigr|_{ z=X_{s,\max\{s,\delta(t)\}}^{\delta,x} }\right].
\end{align*}
Next, using $\eqref{i2}$ and $\eqref{i1}$, we get
%Consequently, applying the tower property of conditional expectation, equation $\eqref{y01}$, and the stationary increments of the Brownian motion and L\'evy process, together with  $\eqref{i2}$ and $\eqref{i1}$, we get
\begin{align*}
% \label{i3}
   \mathbb{E} \left[ V(X_{s,t}^{\delta,x})\right] %=\mathbb{E} \left[\mathbb{E} \left[V(X_{s,t}^{\delta,x})\big| \mathcal{F}_{\max\{s,\delta(t)\}}\right] \right] \notag \\ 
%&=\mathbb{E}\biggl[\mathbb{E} \Bigl[V\Bigl(z+\mu(z)(t- \max\{s,\delta(t)\})+ \sigma(z)(W_{t} -W_{\max\{s,\delta(t)\}} ) \notag \\ &\qquad  + \gamma (z)(Z_{t} -Z_{ \max\{s,\delta(t)\}} )\Bigr)\Bigr|_{z=X_{s,\max\{s,\delta(t)\}}^{\delta,x} } \Big|\mathcal{F}_{\max\{s,\delta(t)\}}\Bigr]\biggr] \notag \\
%&=\mathbb{E}\left[\mathbb{E} \Bigl[V \Bigl(z+\mu(z)(t- \max\{s,\delta(t)\})+\sigma(z)W_{t- \max\{s,\delta(t)\}} + \gamma (z)Z_{t- \max\{s,\delta(t)\}}  \Bigr) \Bigr]\Bigr|_{ z=X_{s,\max\{s,\delta(t)\}}^{\delta,x} }\right] \notag \\
&\leq e^{2.5 \overline{c} (t-\max\{s,\delta(t)\})}  \mathbb{E} \left[ V\bigl(X_{s,\max\{s,\delta(t)\}}^{\delta,x} \bigr)\right] \notag \\
& \leq e^{2.5\overline{c} (t-\max\{s,\delta(t)\})}  e^{2.5 \overline{c} (\max\{s,\delta(t)\}-s)}  V(x) \notag \\
& = e^{2.5 \overline{c}(t-s)}V(x).
\end{align*}
% Using the first statement of $\bf A5$ with $y=x$ and (i), (ii) of Remark $\ref{Rem1}$, we have
% \begin{align*}
% &((V(x))^n)'\mu (x) + \dfrac{1}{2} ((V(x))^n)'' \sigma^2(x) + \int_{\bR_0}\left[(V(x+\gamma(x)z))^n-(V(x))^n-((V(x))^n)'\gamma(x)z\right] \nu (dz)    \notag \\
% & \leq n\overline{c}(V(x))^{n-1}\left(V(x)\right)^{1-\frac{1}{p}} \left(V(x)\right)^{\frac{1}{p}} + \dfrac{1}{2} n ((V(x))^{n-1} \overline{c} (V(x))^{1-\frac{2}{p}}+(n-1) \left(V(x)\right)^{2-\frac{2}{p}} (V(x))^{n-2}) \left(V(x)\right)^{\frac{2}{p}}\notag \\
% & \quad + \overline{c} (V(x))^n \\
% %& \leq \overline{c} V(x) +\dfrac{1}{2} \overline{c} V(x)+ \dfrac{1}{2} \gamma^2(x) \overline{c} \int_{\bR_0}    \left(V(x+\theta(x, \gamma(x)z) \gamma(x)z)\right)^{1-\frac{2}{p}} z^2 \nu(dz) \notag \\
% %& ??? \leq \overline{c} V(x) +\dfrac{1}{2} \overline{c} V(x)+\dfrac{1}{2} \gamma^2(x) \overline{c} \left(V(x)\right)^{1-\frac{2}{p}} \int_{\bR_0}  z^2 \nu(dz) \notag \\
% & = n\overline{c} (V(x))^n +\dfrac{n^2}{2} \overline{c} (V(x))^n+ \overline{c} (V(x))^n = \left( n+\dfrac{n^2}{2}+ 1\right) \overline{c} (V(x))^n.
% \end{align*}
%  Then, applying Lemma $\ref{Lem2.2}$ with{ \color{red} $\alpha = 2.5 \overline{c} $, $X_{\tau}=X^{\iota, x}_{s, t}$, $\tau=t$ and $V(t,x)=V(x)$,} we obtain
% \begin{align}
%     \mathbb{E}\left[V(X^{\iota, x}_{s, t})\right] \leq e^{2.5 \overline{c}(t-s)}V(x).
%     \label{i11}
% \end{align}
This shows $\textup{(i)}.$\\
$\textup{(ii)}$ Using equation $\eqref{y01}$, we write
  \begin{align*}
 &  X^{\delta, x}_{s,\widetilde{t}}-X^{\delta, x}_{s,t} \\
 &= \int_t^{\widetilde{t}} \mu(X_{s,\max\{s,\delta(r)\}}^{\delta,x}) dr + \int_t^{\widetilde{t}} \sigma(X_{s,\max\{s,\delta(r)\}}^{\delta,x}) dW_r + \int_t^{\widetilde{t}} \int _{\bR_0}\gamma (X_{s,\max\{s,\delta(r-)\}}^{\delta,x}) z \widetilde{N}(dr, dz).
  \end{align*} 
  Then, applying the triangle inequality, we get for $p \ge 2,$
\begin{align} 
\label{Lem3.3.1}
\left[\mathbb{E}\left[\left|X^{\delta, x}_{s,\widetilde{t}}-X^{\delta, x}_{s,t}\right|^p\right]\right]^{1/p} 
&\leq \left[\mathbb{E} \left[\left| \int_t^{\widetilde{t}} \mu(X_{s,\max\{s,\delta(r)\}}^{\delta,x}) dr \right|^p \right]\right]^{1/p} + \left[\mathbb{E} \left[\left| \int_t^{\widetilde{t}} \sigma(X_{s,\max\{s,\delta(r)\}}^{\delta,x}) dW_r \right|^p \right]\right]^{1/p} \notag \\
& \quad + \left[\mathbb{E} \left[\left| \int_t^{\widetilde{t}} \int _{\bR_0}\gamma (X_{s,\max\{s,\delta(r-)\}}^{\delta,x}) z \widetilde{N}(dr, dz) \right|^p\right]\right]^{1/p}.
\end{align}
Next, using H\"older inequality and Burkholder-Davis-Gundy inequalities of the It\^o stochastic integral with respect to Brownian motion and compensated Poisson random measure (see Lemma \ref{BDGjump}), and proceeding as in the proof of Lemma $\ref{Lem3.1}$, we get
%applying Lemma $\ref{Lem3.1}$ with $a_r=\mu(X_{s,\max\{s,\delta(r)\}}^{\delta,x}),$ $b_r= \sigma(X_{s,\max\{s,\delta(r)\}}^{\delta,x})$ and $g_r = \gamma (X_{s,\max\{s,\delta(r)\}}^{\delta,x})$, we get 
\begin{align}
\label{s,wt}
&\left[\mathbb{E}\left[\left|X^{\delta, x}_{s,\widetilde{t}}-X^{\delta, x}_{s,t}\right|^p\right]\right]^{1/p} \notag \\
& \leq |\widetilde{t} -t|^{1/p} \left\{ T^{1-\frac{1}{p}} \sup_{r\in [t, \widetilde{t}]}  \left[\mathbb{E} \left[\left| \mu (X_{s,\max\{s,\delta(r)\}}^{\delta,x}) \right|^p \right]\right]^{1/p} + C_p T^{\frac{1}{2}-\frac{1}{p}} \sup_{r\in [t, \widetilde{t}]}  \left[\mathbb{E} \left[\left| \sigma (X_{s,\max\{s,\delta(r)\}}^{\delta,x}) \right|^p \right]\right]^{1/p} \right.\notag\\
 & \quad \left. + C_p (T^{\frac{p}{2} -1} m_2^{p/2} + m_p)^{1/p} \sup_{r\in [t, \widetilde{t}]}  \left[\mathbb{E} \left[\left| \gamma (X_{s,\max\{s,\delta(r)\}}^{\delta,x}) \right|^p \right]\right]^{1/p} \right\}.
\end{align}
Thanks to Remark $\ref{Rem1}$, we have
\begin{align*}
&\left[\mathbb{E}\left[\left|X^{\delta, x}_{s,\widetilde{t}}-X^{\delta, x}_{s,t}\right|^p\right]\right]^{1/p} \notag \\
& \leq |\widetilde{t} -t|^{1/p} \left\{ T^{1-\frac{1}{p}} \sup_{r\in [t, \widetilde{t}]}  \left[\mathbb{E}\left[V(X_{s,\max\{s,\delta(r)\}}^{\delta,x})\right]\right]^{1/p}  + C_p T^{\frac{1}{2}-\frac{1}{p}} \sup_{r\in [t, \widetilde{t}]}  \left[\mathbb{E}\left[V(X_{s,\max\{s,\delta(r)\}}^{\delta,x})\right]\right]^{1/p}  \right.\notag\\
 & \quad \left. + C_p (T^{\frac{p}{2} -1} m_2^{p/2} + m_p)^{1/p} \sup_{r\in [t, \widetilde{t}]}  \left[\mathbb{E} \left[V(X_{s,\max\{s,\delta(r)\}}^{\delta,x})\right]\right]^{1/p} \right\} \notag\\
 %& \leq C_p |\widetilde{t} -t|^{1/p} \sup_{r\in [t, \widetilde{t}]} \left[\mathbb{E} \left[\left| |V(X_{s,\max\{s,\delta(r)\}}^{\delta,x})|^{1/p}  \right|^p \right]\right]^{1/p} \notag\\
& \le C_p |\widetilde{t} -t|^{1/p} \sup_{r\in [t, \widetilde{t}]} \left[\mathbb{E} \left[ V(X_{s,\max\{s,\delta(r)\}}^{\delta,x}) \right] \right]^{1/p},
\end{align*} 
for a positive constant $C_p=C_p(T,m_2, m_p)$.

Consequently, applying the result $\textup{(i)}$ above, we obtain that
\begin{align}
\label{Lem3.3.4}
\left[\mathbb{E}\left[\left|X^{\delta, x}_{s,\widetilde{t}}-X^{\delta, x}_{s,t}\right|^p\right]\right]^{1/p} 
&\leq C_p |\widetilde{t} -t|^{1/p} (e^{2.5 \overline{c}T}V(x))^{1/p},
%& = C_p |\widetilde{t} -t|^{1/p} e^{\overline{c}T} \sqrt{V(x)}.
% & \leq \left[\sqrt{T} + 1+ \sqrt{m_2} \right] |\widetilde{t} -t|^{1/2} e^{\overline{c}T} \sqrt{V(x)} \notag  \\
% & \leq \left[\sqrt{T} + 1+ \sqrt{m_2} \right] |\delta|^{1/2} e^{\overline{c}T} \sqrt{V(x)}.
\end{align} 
for a positive constant $C_p=C_p(T,m_2,m_p)$. This finishes the desired proof. 
\end{proof}

\begin{Lem}
\label{Lem3.4}
 For $s\in [0, T], t \in [s, T],$ $\delta \in \widetilde{\mathbb{S}}$ and $ x, \widetilde{x} \in \bR,$ let $(X^{\delta,x}_{s,t})_{t\in[s,T]}:[s,T]  \times\Omega\to\mathbb{R} $ 
be an $(\mf_t)_{t\in[s,T]}$-adapted stochastic process with c\`adl\`ag  sample paths satisfying equation \eqref{y01}. Then, for all $p\ge 2,$ there exists a positive constant $C_p$ such that \begin{align*}
\left[\mathbb{E}\left[\left|X^{\delta, x}_{s,t}-X^{\delta, \widetilde{x}}_{s,t}\right|^p\right]\right]^{1/p} \leq C_p |x-\widetilde{x}|.  
\end{align*}
%where the constant $c$ is given in {\bf A2} and the constant $C_p$ is given in Lemma $\ref{Lem3.1}$.
\end{Lem}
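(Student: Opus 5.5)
We will prove the spatial Lipschitz estimate by a Gronwall argument on the difference of the two schemes started from $x$ and $\widetilde{x}$. Both $X^{\delta,x}_{s,\cdot}$ and $X^{\delta,\widetilde{x}}_{s,\cdot}$ satisfy \eqref{y01} with the same grid $\delta$, so subtracting gives, for $t\in[s,T]$,
\begin{align*}
X^{\delta,x}_{s,t}-X^{\delta,\widetilde{x}}_{s,t}
&= (x-\widetilde{x}) + \int_s^t \bigl(\mu(X^{\delta,x}_{s,\max\{s,\delta(r)\}})-\mu(X^{\delta,\widetilde{x}}_{s,\max\{s,\delta(r)\}})\bigr)dr \\
&\quad + \int_s^t \bigl(\sigma(X^{\delta,x}_{s,\max\{s,\delta(r)\}})-\sigma(X^{\delta,\widetilde{x}}_{s,\max\{s,\delta(r)\}})\bigr)dW_r \\
&\quad + \int_s^t\int_{\bR_0} \bigl(\gamma(X^{\delta,x}_{s,\max\{s,\delta(r-)\}})-\gamma(X^{\delta,\widetilde{x}}_{s,\max\{s,\delta(r-)\}})\bigr) z\,\widetilde{N}(dr,dz).
\end{align*}
We take the $L^p(\pr)$-norm and use the triangle inequality. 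This reduces the task to bounding three stochastic-integral terms of exactly the type treated in Lemma \ref{Lem3.1}, with $(\psi_r,\widetilde{\psi}_r)=(X^{\delta,x}_{s,\max\{s,\delta(r)\}},X^{\delta,\widetilde{x}}_{s,\max\{s,\delta(r)\}})$.

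We will not use the final $\sup$-form of Lemma \ref{Lem3.1}. Instead we stop at its intermediate integral form, where each term is controlled by H\"older's inequality, the Brownian BDG inequality and Lemma \ref{BDGjump}, together with the Lipschitz bound of Remark \ref{Rem1}. This gives
\[
x_t:=\bigl[\mathbb{E}|X^{\delta,x}_{s,t}-X^{\delta,\widetilde{x}}_{s,t}|^p\bigr]^{1/p}
\le |x-\widetilde{x}| + c\,K_p \Bigl(\int_s^t \mathbb{E}\bigl|X^{\delta,x}_{s,\max\{s,\delta(r)\}}-X^{\delta,\widetilde{x}}_{s,\max\{s,\delta(r)\}}\bigr|^p dr\Bigr)^{1/p}.
\]
Here $K_p$ collects $T^{1-1/p}$, $C_p^{1/p}T^{1/2-1/p}$ and $C_p^{1/p}(T^{p/2-1}m_2^{p/2}+m_p)^{1/p}$. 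For the jump term it is essential to keep the bound in the form $(\int\mathbb{E}|\cdot|^p dr)^{1/p}$, which comes from the two terms in Lemma \ref{BDGjump}, after bounding $(\int \mathbb{E}|g|^2 dr)^{p/2}$ by Jensen/H\"older in time.

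The resulting inequality has the form $x(t)\le a+(\int_s^t |c_\star x(\widehat\delta(r))|^p dr)^{1/p}$, where $\widehat\delta(r)=\max\{s,\delta(r)\}\le r$ is measurable and $a\equiv|x-\widetilde{x}|$. This is precisely the setting of Lemma \ref{Cor2.1} with $t_0=s$ and $c_\star=cK_p$. Before invoking it we must check finiteness of $x(t)$. This follows from Lemma \ref{Lem3.3}(i) together with $|y|^p\lesssim V(y)$, since $V$ grows like $|y|^p$; hence $\mathbb{E}|X^{\delta,x}_{s,t}|^p<\infty$ uniformly in $t$. Lemma \ref{Cor2.1} then yields
\[
x_t\le 2^{1-1/p}\exp\bigl(2^{p-1}(cK_p)^pT/p\bigr)|x-\widetilde{x}|,
\]
which is the claim with $C_p$ depending only on $p,T,c,m_2,m_p$. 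The case $\delta=\iota$ is included, with $\widehat\delta(r)=r$.

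The main obstacle is the jump term. We must make sure that the Kunita/BDG bound from Lemma \ref{BDGjump} can be written as a time integral of $\mathbb{E}|\psi_r-\widetilde{\psi}_r|^p$ rather than a supremum, so that the Gronwall form of Lemma \ref{Cor2.1} applies. The $m_2^{p/2}$ part needs H\"older in $r$ on $(\int|g_r|^2dr)^{p/2}$ with a factor $|t-s|^{p/2-1}\le T^{p/2-1}$. The $m_p$ part is already linear in $\int\mathbb{E}|g_r|^p dr$. One also has to handle the left limit $\delta(r-)$ versus $\delta(r)$; these differ only on a countable set of $r$, so they agree $dr$-a.e.
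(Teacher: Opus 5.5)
Your proposal is correct and matches the paper's proof step for step. You subtract the two copies of \eqref{y01} and bound the three integrals in integral (not supremum) form via H\"older, BDG, Lemma~\ref{BDGjump} and the Lipschitz bound; the paper phrases this as ``proceeding as in the proof of Lemma~\ref{Lem3.2}''. You then close with Lemma~\ref{Cor2.1}, taking $a(t)=|x-\widetilde{x}|$ and $c_\star=cC_p$. Your added check that $x(t)<\infty$, using Lemma~\ref{Lem3.3}(i) and $|y|^p\lesssim V(y)$, is a hypothesis of Lemma~\ref{Cor2.1} that the paper leaves implicit.
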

\begin{proof}
  Using the definition of $X^{\delta, x}_{s,t}, X^{\delta, \widetilde{x}}_{s,t}$, the triangle inequality and proceeding as in the proof of Lemma $\ref{Lem3.2}$, we obtain  
\begin{align*}
\left[\mathbb{E}\left[\left|X^{\delta, x}_{s,t}-X^{\delta, \widetilde{x}}_{s,t}\right|^p\right]\right]^{1/p} & \leq |x-\widetilde{x}| + \left[\mathbb{E} \left[\left| \int_s^t 
 \left (\mu(X_{s,\max\{s,\delta(r)\}}^{\delta,x}) -\mu(X_{s,\max\{s,\delta(r)\}}^{\delta,\widetilde{x}}) \right) dr \right|^p \right]\right]^{1/p} \notag\\
& \quad + \left[\mathbb{E} \left[\left| \int_s^t \left( \sigma(X_{s,\max\{s,\delta(r)\}}^{\delta,x}) -\sigma(X_{s,\max\{s,\delta(r)\}}^{\delta,\widetilde{x}})\right) dW_r \right|^p \right]\right]^{1/p}\notag \\
& \quad + \left[\mathbb{E} \left[\left| \int_s^t \int_{\bR_0} \left(\gamma (X_{s,\max\{s,\delta(r-)\}}^{\delta,x})-\gamma (X_{s,\max\{s,\delta(r-)\}}^{\delta,\widetilde{x}}) \right) z \widetilde{N}(dr, dz)  \right|^p \right]\right]^{1/p} \notag \\
& \leq |x-\widetilde{x}| +  c \left( |t-s|^{p-1} \int_s^t \mathbb{E} \left[ \left| X_{s,\max\{s,\delta(r)\}}^{\delta,x} -X_{s,\max\{s,\delta(r)\}}^{\delta,\widetilde{x}}
\right|^p\right] dr \right)^{1/p} \\
&\quad + c \left( C_{p} |t-s|^{\frac{p}{2}-1} \int_s^t \mathbb{E} \left[ \left| X_{s,\max\{s,\delta(r)\}}^{\delta,x} -X_{s,\max\{s,\delta(r)\}}^{\delta,\widetilde{x}} \right|^p\right] dr \right)^{1/p} \notag\\
& \quad  + c \left( C_{p} (|t-s|^{\frac{p}{2}-1}m_2^{\frac{p}{2}}+m_p )^{1/p} \int_s^t \mathbb{E} \left[ \left|  X_{s,\max\{s,\delta(r)\}}^{\delta,x} -X_{s,\max\{s,\delta(r)\}}^{\delta,\widetilde{x}} \right|^p\right] dr \right)^{1/p} \notag\\
& \leq |x-\widetilde{x}| + c C_p \left[ \int_s^t \mathbb{E} \left[\left| X_{s,\max\{s,\delta(r)\}}^{\delta,x} -X_{s,\max\{s,\delta(r)\}}^{\delta,\widetilde{x}}  \right|^p\right] dr \right]^{1/p},
\end{align*}
for a positive constant $C_p:= C_p (T, m_2, m_p).$

Then, applying Lemma $\ref{Cor2.1}$ for $x(t)= \left[\mathbb{E}\left[\left|X^{\delta, x}_{s,t}-X^{\delta, \widetilde{x}}_{s,t}\right|^p\right]\right]^{1/p} $, $a(t)=|x-\widetilde{x}|$ and $c_{\star}=c C_p,$ we have
\begin{align*}
\left[\mathbb{E}\left[\left|X^{\delta, x}_{s,t}-X^{\delta, \widetilde{x}}_{s,t}\right|^p\right]\right]^{1/p} \le 2^{1-\frac{1}{p}} |x-\widetilde{x}| \exp \left(\dfrac{2^{p-1} (c C_p)^p (t-s)}{p}\right) \leq C_p |x-\widetilde{x}| , 
\end{align*}
for a positive constant $C_p:= C_p (c, T, m_2, m_p).$ This finishes the desired proof.
\end{proof}

\begin{Lem}
\label{Lem3.5}
For $\delta \in \widetilde{\mathbb{S}},$ $x \in \bR,$ $s\in [0, T],$ $\widetilde{s} \in [s, T], \widehat{s} \in [\widetilde{s},T],$ $\delta([0, T])\cap (s, \widehat{s}) = \emptyset,$ let $(X^{\delta,x}_{s,t})_{t\in[s,T]}:[s,T]  \times\Omega\to\mathbb{R} $ 
be an $(\mf_t)_{t\in[s,T]}$-adapted stochastic process with c\`adl\`ag  sample paths satisfying equation \eqref{y01}. 
%Let $V\in C^2(\mathbb{R}, [1, \infty))$ satisfy statements {\bf A2-A5}. 
Then, for all $p \ge 2$, there exists a positive constant $C_p$ such that 
\begin{enumerate}
     \item [\textup{(i)}]
    \begin{align*}
        \left[\mathbb{E}\left|X^{\delta, x}_{\widetilde{s},\widehat{s}}-X^{\delta, x}_{s,\widehat{s}}\right|^p\right]^{1/p} 
 \leq C_p |\widetilde{s} -s|^{1/p} (e^{2.5\overline{c}T}V(x))^{1/p}. 
 %\label{iii5}
 \end{align*}
 \item [\textup{(ii)}]
 \begin{align*}
\left[\mathbb{E}\left[\left|\left(X^{\iota, x}_{s,\widehat{s}}-X^{\delta,x}_{s,\widehat{s}}\right) - \left(X^{\iota, x}_{\widetilde{s},\widehat{s}}-X^{\delta, x}_{\widetilde{s},\widehat{s}}\right)\right|^p\right]\right]^{1/p}  
   \leq C_p |\delta|^{1/p} |\widetilde{s} -s|^{1/p} (e^{2.5\overline{c}T}V(x))^{1/p}.
   %\label{vi1}
   \end{align*}
\end{enumerate}
\end{Lem}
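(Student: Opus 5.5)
The plan rests on one structural fact: since $\delta([0,T])\cap(s,\widehat{s})=\emptyset$, every $r\in(s,\widehat{s}]$ has $\max\{s,\delta(r)\}=s$ and, likewise, $\max\{\widetilde{s},\delta(r)\}=\widetilde{s}$ for $r\in(\widetilde{s},\widehat{s}]$. Plugging this into \eqref{y01}, the Euler--Maruyama process does not update its coefficients on $[s,\widehat{s}]$:
\begin{align*}
X^{\delta,x}_{s,r}&=x+\mu(x)(r-s)+\sigma(x)(W_r-W_s)+\gamma(x)(Z_r-Z_s),\quad r\in[s,\widehat{s}],\\
X^{\delta,x}_{\widetilde{s},r}&=x+\mu(x)(r-\widetilde{s})+\sigma(x)(W_r-W_{\widetilde{s}})+\gamma(x)(Z_r-Z_{\widetilde{s}}),\quad r\in[\widetilde{s},\widehat{s}].
\end{align*}
We will also use the bound $\widetilde{s}-s\le\widehat{s}-s\le|\delta|$ (when $\delta\neq\iota$; for $\delta=\iota$ the left side of (ii) is $0$).

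For (i), subtracting the two displays gives
\[
X^{\delta,x}_{s,\widehat{s}}-X^{\delta,x}_{\widetilde{s},\widehat{s}}=\mu(x)(\widetilde{s}-s)+\sigma(x)(W_{\widetilde{s}}-W_s)+\gamma(x)(Z_{\widetilde{s}}-Z_s).
\]
We bound this with the same Hölder/BDG computation as in Lemma \ref{Lem3.1}, now with deterministic integrands $a_r=\mu(x)$, $b_r=\sigma(x)$, $g_r=\gamma(x)$. This gives the factor $|\widetilde{s}-s|^{1/p}$ times $\max\{|\mu(x)|,|\sigma(x)|,|\gamma(x)|\}$, up to constants depending on $T,m_2,m_p$. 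Remark \ref{Rem1} then bounds this maximum by $V(x)^{1/p}\le(e^{2.5\overline{c}T}V(x))^{1/p}$.

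For (ii), we write $X^{\iota,x}_{s,\widehat{s}}$ by integrating \eqref{y01} for $\delta=\iota$ first over $[s,\widetilde{s}]$ and then over $[\widetilde{s},\widehat{s}]$. We do the same for $X^{\iota,x}_{\widetilde{s},\widehat{s}}$ on $[\widetilde{s},\widehat{s}]$, and subtract the identity of part (i). Writing $\int\theta(\cdot)\,d(\cdot)$ as shorthand for the sum of the $dr$, $dW_r$ and $\widetilde{N}(dr,dz)$ integrals with $\theta=(\mu,\sigma,\gamma)$, the error splits as $D=D_1+D_2$ with
\begin{align*}
D_1&=\int_s^{\widetilde{s}}\bigl(\theta(X^{\iota,x}_{s,r})-\theta(x)\bigr)\,d(\cdot),\\
D_2&=\int_{\widetilde{s}}^{\widehat{s}}\bigl(\theta(X^{\iota,x}_{s,r})-\theta(X^{\iota,x}_{\widetilde{s},r})\bigr)\,d(\cdot).
\end{align*}
For $D_1$ we apply Lemma \ref{Lem3.2} with $\mathbb{X}=X^{\iota,x}_{s,\cdot}$ and $\mathbb{Y}=x$. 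This gives the bound $cC_p|\widetilde{s}-s|^{1/p}\sup_{r\in[s,\widetilde{s}]}\|X^{\iota,x}_{s,r}-x\|_p$. Lemma \ref{Lem3.3}(ii), with $\delta=\iota$ and $t=s$, bounds that supremum by $C_p|\widetilde{s}-s|^{1/p}(e^{2.5\overline{c}T}V(x))^{1/p}$. Since $|\widetilde{s}-s|^{1/p}\le|\delta|^{1/p}$, the term $D_1$ satisfies the claimed estimate.

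For $D_2$, the Lipschitz/BDG estimate of Lemma \ref{Lem3.2} (applied on $[\widetilde{s},\widehat{s}]$) gives the factor $|\widehat{s}-\widetilde{s}|^{1/p}\le|\delta|^{1/p}$ times $\sup_{r\in[\widetilde{s},\widehat{s}]}\|X^{\iota,x}_{s,r}-X^{\iota,x}_{\widetilde{s},r}\|_p$. To bound this supremum we use the flow property of the exact solution: by pathwise uniqueness, $X^{\iota,x}_{s,r}=X^{\iota,y}_{\widetilde{s},r}\big|_{y=X^{\iota,x}_{s,\widetilde{s}}}$ for $r\ge\widetilde{s}$. Conditioning on $\mathcal{F}_{\widetilde{s}}$ and using the disintegration argument already employed in Lemma \ref{Lem3.3} (\cite[Lemma 2.2]{HJKNW20}), Lemma \ref{Lem3.4} applied at fixed $y$ yields
\[
\|X^{\iota,x}_{s,r}-X^{\iota,x}_{\widetilde{s},r}\|_p\le C_p\|X^{\iota,x}_{s,\widetilde{s}}-x\|_p.
\]
Lemma \ref{Lem3.3}(ii) again bounds the right-hand side by $C_p|\widetilde{s}-s|^{1/p}(e^{2.5\overline{c}T}V(x))^{1/p}$. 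Collecting the bounds on $D_1$ and $D_2$ gives (ii).

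The step I expect to be most delicate is this flow/conditioning argument in $D_2$. It needs pathwise uniqueness of \eqref{y01} for $\delta=\iota$, which follows from the Lipschitz coefficients. It also needs Lemma \ref{Lem3.4} to hold for deterministic initial data at time $\widetilde{s}$, so that it can be integrated against the law of $X^{\iota,x}_{s,\widetilde{s}}$; the $p$-th moment identity $\mathbb{E}|\cdot|^p=\mathbb{E}\bigl[\mathbb{E}[|\cdot|^p\mid\mathcal{F}_{\widetilde{s}}]\bigr]$ makes this rigorous. Everything else reduces to Lemmas \ref{Lem3.1}--\ref{Lem3.4} and Remark \ref{Rem1}.
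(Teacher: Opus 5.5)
Your proof is correct and follows the paper's argument nearly step for step. Part (i) uses the same frozen-coefficient formula and H\"older/BDG bound. In (ii), your $D_1+D_2$ is exactly the paper's decomposition, with Lemma \ref{Lem3.2} applied to each piece and Lemma \ref{Lem3.3}(ii) controlling $\sup_{r\in[s,\widetilde s]}\bigl[\mathbb{E}|X^{\iota,x}_{s,r}-x|^p\bigr]^{1/p}$.

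The one place you differ is the bound on $\sup_{r\in[\widetilde s,\widehat s]}\bigl[\mathbb{E}|X^{\iota,x}_{s,r}-X^{\iota,x}_{\widetilde s,r}|^p\bigr]^{1/p}$. The paper simply invokes part (i) of this lemma. But (i) is proved from the explicit Euler formula under the hypothesis $\delta([0,T])\cap(s,\widehat s)=\emptyset$. For $\iota$ every point is a grid point, so that hypothesis forces $s=\widehat s$, and the citation does not literally cover the exact solution. Your Markov/flow argument, combining Lemma \ref{Lem3.4} with Lemma \ref{Lem3.3}(ii), is the right justification. It is the same argument the paper itself uses later, in Case 1 of the proof of Theorem \ref{Thm}(ii). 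So your version closes a small gap in the paper rather than merely re-deriving it.

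You can also avoid the flow and disintegration step you flagged as delicate. On $[\widetilde s,T]$, the process $X^{\iota,x}_{s,\cdot}$ solves the same equation started from the $\mathcal{F}_{\widetilde s}$-measurable value $X^{\iota,x}_{s,\widetilde s}$. Hence the Gronwall argument in the proof of Lemma \ref{Lem3.4}, via Lemma \ref{Cor2.1}, goes through verbatim with $|x-\widetilde x|$ replaced by $\bigl[\mathbb{E}|X^{\iota,x}_{s,\widetilde s}-x|^p\bigr]^{1/p}$. This needs no measurable dependence on the initial point.
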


\begin{proof}
%  If $\widetilde{s}$ is not a grid point, then there exists $\widehat{s}\in [\widetilde{s}, T]$ such that there is no grid points on $(s, \widehat{s})$.\\
% Let $\delta \in \widetilde{\mathbb{S}},$ $x \in \bR,$ $s\in [0, T],$ $\widetilde{s} \in [s, T], \widehat{s} \in [\widetilde{s},T],$ $\delta([0, T])\cap [s, \widehat{s}] = \emptyset $.
(i) Using the fact that there is no grid point on $(s, \widehat{s})$ and $(\widetilde{s}, \widehat{s}),$ and equation $\eqref{y01},$ we write
\begin{align*}
   X^{\delta, x}_{s,\widehat{s}} = x +\mu (x)(\widehat{s}-s)+\sigma(x)(W_{\widehat{s}}-W_s) +  \gamma(x)(Z_{\widehat{s}}-Z_{s}), \notag\\
   X^{\delta, x}_{\widetilde{s},\widehat{s}} = x +\mu (x)(\widehat{s}-\widetilde{s})+\sigma(x)(W_{\widehat{s}}-W_{\widetilde{s}}) +  \gamma(x)(Z_{\widehat{s}}-Z_{\widetilde{s}}).
\end{align*}
Therefore, proceeding as in the proof of (ii) of Lemma $\ref{Lem3.3}$, we get 
\begin{align*}
&\left[\mathbb{E}\left|X^{\delta, x}_{\widetilde{s},\widehat{s}}-X^{\delta, x}_{s,\widehat{s}}\right|^p\right]^{1/p} 
= \left[\mathbb{E}\left[\Big|\mu (x)(s-\widetilde{s})+\sigma(x)(W_s-W_{\widetilde{s}}) +  \gamma(x)(Z_{s}-Z_{\widetilde{s}})\Big|^p\right]\right]^{1/p} \notag \\
& \le \left[\mathbb{E}\left[\Big| \int_s^{\widetilde{s}} \mu (x)dr\Big|^p\right]\right]^{1/p} + \left[\mathbb{E}\left[\Big| \int_s^{\widetilde{s}} \sigma (x)dW_r\Big|^p\right]\right]^{1/p} + \left[\mathbb{E} \left[\left| \int_s^{\widetilde{s}} \int _{\bR_0}\gamma(x) z \widetilde{N}(dr, dz) \right|^p\right]\right]^{1/p}\notag \\
 & \leq C_p |\widetilde{s} -s|^{1/p} (e^{2.5\overline{c}T}V(x))^{1/p},
\end{align*}   
 % If $\widetilde{s}$ is not a grid point, then there exists $\widehat{s}\in [\widetilde{s}, T]$ such that there is no grid points on $(s, \widehat{s})$.\\
% Let $\delta \in \widetilde{\mathbb{S}},$ $x \in \bR, s\in [0, T],$ $\widetilde{s} \in [s, T],$ $\widehat{s} \in [\widetilde{s},T],$ $\delta([0, T])\cap [s, \widehat{s}] = \emptyset $.
% Using the fact that there is no grid point on $(s, \widehat{s})$ and equation $\eqref{y01}$, we have
% \begin{align*}
%  X^{\delta, x}_{s,\widehat{s}}-X^{\delta, x}_{\widetilde{s},\widehat{s}} &=   \left[ x +\mu (x)(\widehat{s}-s)+\sigma(x)(W_{\widehat{s}}-W_{s}) +  \gamma(x)(Z_{\widehat{s}}-Z_{s})\right]
%  \notag\\
%  & \quad - \left[x +\mu (x)(\widehat{s}-\widetilde{s})+\sigma(x)(W_{\widehat{s}}-W_{\widetilde{s}}) +  \gamma(x)(Z_{\widehat{s}}-Z_{\widetilde{s}})\right] \notag\\
%  & = \mu (x)(\widetilde{s}-s)+\sigma(x)(W_{\widetilde{s}}-W_s) + \gamma(x)(Z_{\widetilde{s}}-Z_{s}).
% \end{align*}
for a positive constant $C_p=C_p(T,m_2, m_p)$. This shows (i).

(ii) Using the definitions of $X^{\iota, x}_{s,\widehat{s}}, X^{\delta,x}_{s,\widehat{s}}, X^{\iota, x}_{\widetilde{s},\widehat{s}}, X^{\delta, x}_{\widetilde{s},\widehat{s}},$ together with the fact that there is no grid point on $(s, \widehat{s}),$ we decompose
\begin{align*}
 & \left(X^{\iota, x}_{s,\widehat{s}}-X^{\delta,x}_{s,\widehat{s}}\right) - \left(X^{\iota, x}_{\widetilde{s},\widehat{s}}-X^{\delta, x}_{\widetilde{s},\widehat{s}}\right) = \left(X^{\iota, x}_{s,\widehat{s}}-X^{\iota, x}_{\widetilde{s},\widehat{s}} \right) - \left(X^{\delta,x}_{s,\widehat{s}}-X^{\delta, x}_{\widetilde{s},\widehat{s}}\right) \notag\\
 & = \Bigg( \left[x+\int_s^{\widehat{s}} \mu(X_{s, r}^{\iota, x}) dr + \int_s^{\widehat{s}} \sigma(X_{s, r}^{\iota, x}) dW_r+ \int_s^{\widehat{s}} \int_{\bR_0} \gamma (X_{s, r-}^{\iota, x}) z \widetilde{N}(dr, dz) \right] \notag \\
 & \quad  -  \left[x+\int_{\widetilde{s}}^{\widehat{s}} \mu(X_{\widetilde{s}, r}^{\iota, x}) dr + \int_{\widetilde{s}}^{\widehat{s}} \sigma(X_{\widetilde{s}, r}^{\iota, x}) dW_r+ \int_{\widetilde{s}}^{\widehat{s}} \int_{\bR_0} \gamma (X_{\widetilde{s}, r-}^{\iota, x}) z \widetilde{N}(dr, dz) \right]\Bigg ) \notag \\
  & \quad - \Bigg( \left[\int_s^{\widehat{s}} \mu(x) dr + \int_s^{\widehat{s}} \sigma(x) dW_r+ \int_s^{\widehat{s}} \int_{\bR_0} \gamma(x) z \widetilde{N}(dr, dz) \right] \notag \\
    & \quad  -  \left[\int_{\widetilde{s}}^{\widehat{s}} \mu(x) dr + \int_{\widetilde{s}}^{\widehat{s}} \sigma(x) dW_r+ \int_{\widetilde{s}}^{\widehat{s}} \int_{\bR_0} \gamma(x) z \widetilde{N}(dr, dz) \right] \Bigg)  \notag \\
 & = \left[\int_s^{\widehat{s}} \mu(X_{s, r}^{\iota, x}) dr + \int_s^{\widehat{s}} \sigma(X_{s, r}^{\iota, x}) dW_r+ \int_s^{\widehat{s}} \int_{\bR_0} \gamma (X_{s, r-}^{\iota, x}) z \widetilde{N}(dr, dz) \right] \notag \\
 & \quad -  \left[\int_{\widetilde{s}}^{\widehat{s}} \mu(X_{\widetilde{s}, r}^{\iota, x}) dr + \int_{\widetilde{s}}^{\widehat{s}} \sigma(X_{\widetilde{s}, r}^{\iota, x}) dW_r+ \int_{\widetilde{s}}^{\widehat{s}} \int_{\bR_0} \gamma (X_{\widetilde{s}, r-}^{\iota, x}) z \widetilde{N}(dr, dz) \right] \notag \\
    & \quad -  \left[\int_{s}^{\widetilde{s}} \mu(x) dr + \int_{s}^{\widetilde{s}} \sigma(x) dW_r+ \int_{s}^{\widetilde{s}} \int_{\bR_0} \gamma(x) z \widetilde{N}(dr, dz) \right]  \notag \\
  & =  \left[\int_s^{\widetilde{s}}  \left(\mu(X_{s, r}^{\iota, x})-\mu(x)\right) dr + \int_s^{\widetilde{s}} \left(\sigma(X_{s, r}^{\iota, x}) -\sigma(x)\right) dW_r+ \int_s^{\widetilde{s}} \int_{\bR_0} \left(\gamma (X_{s, r-}^{\iota, x})-\gamma(x)\right) z \widetilde{N}(dr, dz) \right] \notag \\
   &  + \left[\int_{\widetilde{s}}^{\widehat{s}} \left( \mu(X_{s, r}^{\iota, x})-\mu(X_{\widetilde{s}, r}^{\iota, x}) \right) dr + \int_{\widetilde{s}}^{\widehat{s}} \left(\sigma(X_{s, r}^{\iota, x}) -\sigma(X_{\widetilde{s}, r}^{\iota, x}) \right) dW_r+ \int_{\widetilde{s}}^{\widehat{s}} \int_{\bR_0} \left(\gamma (X_{s, r-}^{\iota, x})-\gamma (X_{\widetilde{s}, r-}^{\iota, x})\right) z \widetilde{N}(dr, dz) \right]. 
\end{align*}
Thus, applying the triangle inequality and Lemma $\ref{Lem3.2},$ we have for any $p \ge 2,$
\begin{align*}
&\left[\mathbb{E}\left[\left|\left(X^{\iota, x}_{s,\widehat{s}}-X^{\delta,x}_{s,\widehat{s}}\right) - \left(X^{\iota, x}_{\widetilde{s},\widehat{s}}-X^{\delta, x}_{\widetilde{s},\widehat{s}}\right)\right|^p\right]\right]^{1/p} \notag\\ 
  & \le  \left[\mathbb{E}\left[\left|\int_s^{\widetilde{s}}  \left(\mu(X_{s, r}^{\iota, x})-\mu(x)\right) dr + \int_s^{\widetilde{s}} \left(\sigma(X_{s, r}^{\iota, x}) -\sigma(x)\right) dW_r+ \int_s^{\widetilde{s}} \int_{\bR_0} \left(\gamma (X_{s, r-}^{\iota, x})-\gamma(x)\right) z \widetilde{N}(dr, dz) \right|^p\right]\right]^{1/p} \notag \\
   & \quad + \left[\mathbb{E}\left[\left| \int_{\widetilde{s}}^{\widehat{s}} \left( \mu(X_{s, r}^{\iota, x})-\mu(X_{\widetilde{s}, r}^{\iota, x}) \right) dr + \int_{\widetilde{s}}^{\widehat{s}} \left(\sigma(X_{s, r}^{\iota, x}) -\sigma(X_{\widetilde{s}, r}^{\iota, x}) \right) dW_r \right. \right. \right. \notag \\
   & \quad + \left. \left. \left. \int_{\widetilde{s}}^{\widehat{s}} \int_{\bR_0} \left(\gamma (X_{s, r-}^{\iota, x})-\gamma (X_{\widetilde{s}, r-}^{\iota, x})\right) z \widetilde{N}(dr, dz) \right|^p\right]\right]^{1/p} 
 \notag \\
 & \leq c C_p |\widetilde{s}-s|^{1/p} \sup_{r\in [s, \widetilde{s}]} \left[ \mathbb{E} \left[\left| X^{\iota,x}_{s, r}-x \right|^p \right]\right]^{1/p} + c C_p |\widehat{s}-\widetilde{s}|^{1/p} \sup_{r\in [ \widetilde{s}, \widehat{s}]} \left[ \mathbb{E} \left[\left| X^{\iota,x}_{s, r}-X^{\iota,x}_{\widetilde{s}, r} \right|^p \right]\right]^{1/p}  \notag\\
 & \leq  c C_p |\widetilde{s}-s|^{1/p} \sup_{r\in [s, \widetilde{s}]} C_p |r-s|^{1/p} (e^{2.5\overline{c}T}V(x))^{1/p} + c C_p |\widehat{s}-\widetilde{s}|^{1/p} C_p |\widetilde{s} - s|^{1/p} (e^{2.5\overline{c}T}V(x))^{1/p} \notag\\
 & \leq  C_p |\widetilde{s}-s|^{1/p}  |\delta|^{1/p} (e^{2.5\overline{c}T}V(x))^{1/p} +  C_p |\delta|^{1/p}  |\widetilde{s} - s|^{1/p} (e^{2.5\overline{c}T}V(x))^{1/p} \notag\\
 % & =  2C_p |\delta|^{1/p} |\widetilde{s} -s|^{1/p} (e^{2.5\overline{c}T}V(x))^{1/p} \notag \\
  & \leq C_p |\delta|^{1/p} |\widetilde{s} -s|^{1/p} (e^{2.5\overline{c}T}V(x))^{1/p}.
  % \label{vi1}
\end{align*} 
Here, to estimate $\left[ \mathbb{E} \left[\left| X^{\iota,x}_{s, r}-x \right|^p \right]\right]^{1/p},$ we  proceed as $\eqref{Lem3.3.1}$-$\eqref{Lem3.3.4}$ in the proof of Lemma $\ref{Lem3.3}$. To estimate $\left[ \mathbb{E} \left[\left| X^{\iota,x}_{s, r}-X^{\iota,x}_{\widetilde{s}, r} \right|^p \right]\right]^{1/p},$ we use the statement (i) above. This finishes the desired proof.
\end{proof}

% \begin{Rem}
%    [Extend the result of $(i)$.]\\
%    Let $W = V^m$, extending the result of $(i)$, we get
%    $$\mathbb{E}\left[V(X^{\delta, x}_{s,t})\right] \leq e^{2\overline{c}|t-s|}V(x).$$
% \end{Rem}
Now, we state the main result which establishes the strong convergence rate of the Euler-Maruyama approximations in temporal-spatial H\"older-norms.
\begin{Thm} \label{Thm} 
%\textcolor{red}{Bo sung thong tin nhu cac Bo de o tren khi gioi thieu $X^{\delta,x}_{s,t}$}
For $ \delta \in \widetilde{\mathbb{S}},$ $s\in [0, T],$ $t \in [s, T]$  and $ x\in \bR,$ let $(X^{\delta,x}_{s,t})_{t\in[s,T]}:[s,T]  \times\Omega\to\mathbb{R} $ 
be an $(\mf_t)_{t\in[s,T]}$-adapted stochastic process with c\`adl\`ag  sample paths satisfying equation \eqref{y01}. 
%Let $p \ge 2$ and $ \mu, \sigma, \gamma, V$ satisfy conditon {\bf A1} and statements {\bf A2-A5}. 
Then, 
\begin{enumerate}
 %   \item [\textup{(i)}] it holds for all $\delta \in \widetilde{\mathbb{S}},$ $ s\in [0, T],$ $t\in [s, T],$ $ x \in \bR$ that
 %$$\mathbb{E}\left[V(X^{\delta, x}_{s,t})\right] \leq e^{2\overline{c}|t-s|}V(x).$$
    \item [\textup{(i)}] for all $\delta \in \widetilde{\mathbb{S}},$ $ s\in [0, T],$ $t\in [s, T],$ $ x \in \bR$, there exists a positive constant $C_p$ such that $$\left[\mathbb{E}\left[\left|X^{\delta, x}_{s,t}-X^{\iota, x}_{s,t}\right|^p\right]\right]^{1/p}\leq C_p |t-s|^{1/p} |\delta|^{1/p} (e^{2.5\overline{c}T}V(x))^{1/p}.$$
    \item [\textup{(ii)}] for all $\delta \in \widetilde{\mathbb{S}},$ $ s, \widetilde{s} \in [0, T],$ $t\in [s, T],$ $\widetilde{t} \in [\widetilde{s}, T],$ $ x, \widetilde{x} \in \bR$, there exists a positive constant $C_p$ such that 
    \begin{align*}
&\left[\mathbb{E}\left[\left|X^{\delta, x}_{s,t}-X^{\delta, \widetilde{x}}_{\widetilde{s},\widetilde{t}}\right|^p\right]\right]^{1/p}  \leq C_p e^{2.5\overline{c}T/p} \dfrac{(V(x))^{1/p} + (V(\widetilde{x}))^{1/p}}{2} \left[|s-\widetilde{s} |^{1/p} + |t-\widetilde{t} |^{1/p}\right] + C_p|x-\widetilde{x}|.\end{align*}
    \item [\textup{(iii)}]  for all $\delta \in \widetilde{\mathbb{S}},$ $ s \in [0, T],$ $t, \widetilde{t} \in [s, T],$ $ x, \widetilde{x} \in \bR$, there exists a positive constant $C_p$ such that \begin{align*}
\left[\mathbb{E}\left[\left|\left(X^{\delta, x}_{s,\widetilde{t}}-X^{\delta,x}_{s,t}\right) - \left(X^{\delta, \widetilde{x}}_{s,\widetilde{t}}-X^{\delta, \widetilde{x}}_{s,t}\right)\right|^p\right]\right]^{1/p}  \le C_p |t-\widetilde{t}|^{1/p} |x-\widetilde{x}|.
    \end{align*} 
    \item [\textup{(iv)}] for all $\delta \in \widetilde{\mathbb{S}},$ $ s \in [0, T],$ $t\in [s, T],$ $m>1,$ $x, \widetilde{x}, y, \widetilde{y} \in \bR$, there exist a positive constant $C_p$ such that
   \begin{align*}
&\left[\mathbb{E}\left[\left|\left(X^{\iota, x}_{s,t}-X^{\delta,y}_{s,t}\right) - \left(X^{\iota, \widetilde{x}}_{s,t}-X^{\delta, \widetilde{y}}_{s,t}\right)\right|^p\right]\right]^{1/p} \notag\\
& \leq 2^{1-\frac{1}{p}} e^{C_p T} |(x-y)-(\widetilde{x}-\widetilde{y})|  + C_p e^{C_p T} |t-s|^{1/p} |x-\widetilde{x}| |\delta|^{1/p}  \notag\\
& \quad + C_p  e^{C_p T}  |t-s|^{1/p} |x-\widetilde{x}| |\delta|^{1/pm} (e^{2.5\overline{c}T})^{1/pm} \frac{  (V(x))^{1/pm}+(V(\widetilde{x}))^{1/pm}}{2}  \notag\\   
& \quad + C_p e^{C_p T}  |t-s|^{1/p} |x-\widetilde{x}|
\dfrac{|x-y|+ |\widetilde{x}-\widetilde{y}|}{2}.
% & \leq  e^{C_p T^{(1/p)+1}} \Bigg\{|(x-y)-(\widetilde{x}-\widetilde{y})| \notag\\
% & \quad + C_p |t-s|^{1/p} |x-\widetilde{x}|  \Bigg[   |\delta |^{1/p}  + |\delta|^{1/pm} (e^{2.5\overline{c}T})^{1/pm} \frac{  (V(x))^{1/pm}+(V(\widetilde{x}))^{1/pm}}{2}  +  \dfrac{|x-y|+ |\widetilde{x}-\widetilde{y}|}{2}    \Bigg] \Bigg\}.
\end{align*}
    \item [\textup{(v)}] for all $\delta \in \widetilde{\mathbb{S}},$ $ s, \widetilde{s} \in [0, T],$ $t\in [s, T],$ $\widetilde{t} \in [\widetilde{s}, T],$ $m>1,$ H\"older conjugates $\kappa_1$ and $\kappa_2,$ $ x, \widetilde{x} \in \bR,$ there exists a positive constant $C_p$ such that\begin{align*}
&\left[\mathbb{E}\left[\left|\left(X^{\iota, x}_{s,t}-X^{\iota,\widetilde{x}}_{\widetilde{s},\widetilde{t}}\right) - \left(X^{\delta, x}_{s,t}-X^{\delta, \widetilde{x}}_{\widetilde{s},\widetilde{t}}\right)\right|^p\right]\right]^{1/p} \notag\\
  & \leq  C_p e^{C_p T} (e^{5 \overline{c}T})^{1/p}         \frac{(V(x))^{2/p}+(V(\widetilde{x}))^{2/p}}{2} \left[  |s-\widetilde{s}|^{1/p\kappa_1} + |t-\widetilde{t}|^{1/p} +|x-\widetilde{x}| \right]  |\delta|^{\frac{1}{p(m \vee \kappa_2)}}. 
    \end{align*}
\end{enumerate}
Here, the constant $\overline{c}$ is given in {\bf A3, A4, A5}.
\end{Thm}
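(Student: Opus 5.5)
Each of (i)--(v) will be proved by writing the relevant difference through \eqref{y01}, splitting it into drift, Brownian and compensated-Poisson parts, bounding these with Lemmas \ref{Lem3.1}--\ref{Lem3.2} (for the jump parts, ultimately Lemma \ref{BDGjump}), and closing with the Gronwall inequality of Lemma \ref{Cor2.1}. The moment bounds come from Lemma \ref{Lem3.3}(i), and the temporal bounds from Lemmas \ref{Lem3.3}(ii) and \ref{Lem3.5}.

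\textbf{(i).} Write $X^{\iota,x}_{s,t}-X^{\delta,x}_{s,t}$ as $\int_s^t(\theta(X^{\iota,x}_{s,r})-\theta(X^{\delta,x}_{s,\max\{s,\delta(r)\}}))$ against $dr$, $dW_r$ and $z\widetilde N(dr,dz)$. Split each integrand as
\[
\bigl(\theta(X^{\iota,x}_{s,r})-\theta(X^{\iota,x}_{s,\max\{s,\delta(r)\}})\bigr)+\bigl(\theta(X^{\iota,x}_{s,\max\{s,\delta(r)\}})-\theta(X^{\delta,x}_{s,\max\{s,\delta(r)\}})\bigr).
\]
For the first piece, apply Lemma \ref{Lem3.1} with the Lipschitz bound and Lemma \ref{Lem3.3}(ii) over the increment $r-\delta(r)\le|\delta|$. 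This gives $C_p|t-s|^{1/p}|\delta|^{1/p}(e^{2.5\overline cT}V(x))^{1/p}$. For the second piece, use Lemma \ref{Lem3.2}, whose integral form is $cC_p(\int_s^t\mathbb E|\cdot|^p)^{1/p}$. Then Lemma \ref{Cor2.1} applies with $a(t)$ equal to the first bound, which is nondecreasing in $t$.

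\textbf{(ii) and (iii).} For (ii), assume $s\le\widetilde s$ and use the triangle inequality through the intermediate points $X^{\delta,x}_{s,\widetilde t}$, $X^{\delta,x}_{\widetilde s,\widetilde t}$ and $X^{\delta,\widetilde x}_{\widetilde s,\widetilde t}$. The time change costs $|t-\widetilde t|^{1/p}$ by Lemma \ref{Lem3.3}(ii), and the spatial change costs $C_p|x-\widetilde x|$ by Lemma \ref{Lem3.4}.

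The starting-time change is the delicate step. Condition on $\mathcal F$ at the first grid point $\widehat s\ge\widetilde s$. Lemma \ref{Lem3.5}(i) bounds the gap at $\widehat s$, and after $\widehat s$ both processes follow the same Euler flow, so Lemma \ref{Lem3.4} (via the disintegration argument) propagates that gap with a constant factor. Symmetrizing in $x,\widetilde x$ produces the average of the $V^{1/p}$ terms.

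For (iii), write the double difference as the integrals from $t$ to $\widetilde t$ of $\theta(X^{\delta,x})-\theta(X^{\delta,\widetilde x})$. Lemma \ref{Lem3.1} with $(\psi,\widetilde\psi)$ the second pair, combined with Lemma \ref{Lem3.4}, gives $C_p|t-\widetilde t|^{1/p}|x-\widetilde x|$.

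\textbf{(iv).} Set $D_r=(X^{\iota,x}_{s,r}-X^{\delta,y}_{s,r})-(X^{\iota,\widetilde x}_{s,r}-X^{\delta,\widetilde y}_{s,r})$. Its integrands are second-order differences $(\theta(a)-\theta(b))-(\theta(\widetilde a)-\theta(\widetilde b))$ with $a=X^{\iota,x}_{s,r}$, $b=X^{\delta,y}_{s,\delta(r)}$, and $\widetilde a,\widetilde b$ their analogues. Insert $X^{\iota,x}_{s,\delta(r)}$ and $X^{\iota,\widetilde x}_{s,\delta(r)}$. Applying {\bf A1} then gives three kinds of terms:
\begin{itemize}
\item $c|D_{\delta(r)}|$, which feeds the Gronwall step;
\item a term $c\,|(X^{\iota,x}_{s,r}-X^{\iota,x}_{s,\delta(r)})-(X^{\iota,\widetilde x}_{s,r}-X^{\iota,\widetilde x}_{s,\delta(r)})|$, handled by (iii) with $\delta=\iota$, giving $|\delta|^{1/p}|x-\widetilde x|$;
\item $b\,\frac{|a-b|+|\widetilde a-\widetilde b|}{2}|a-\widetilde a|$.
\end{itemize}
For the product term, use H\"older with exponents $2p$ (or $pm$ and its conjugate). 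The factor $|a-\widetilde a|$ is controlled by Lemma \ref{Lem3.4}. The factor $|a-b|$ is at most $|x-y|$ times a constant plus the time-lag plus (i). Interpolating the time-lag error, which is at most $(|\delta|V)^{1/p}$, in $L^{pm}$ yields the $|\delta|^{1/pm}V^{1/pm}$ term. Finally apply Lemma \ref{Cor2.1}, whose output constant $2^{1-1/p}e^{C_pT}$ matches the statement.

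\textbf{(v).} Reduce to same starting times. Split the double difference into a same-time piece, handled by (iv) with $y=x$ and $\widetilde y=\widetilde x$ plus a time-difference piece analogous to (iii) applied to the error process, and a starting-time piece. The starting-time piece is handled by Lemma \ref{Lem3.5}(ii) up to the next grid point, then propagated by (iv). Interpolating between the $|s-\widetilde s|^{1/p}$ bound and the $|\delta|^{1/p}$ bound with H\"older conjugates $\kappa_1,\kappa_2$ gives the mixed rates, and the products of $V$ powers give $V^{2/p}$ and $e^{5\overline cT}$.

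I expect the main obstacle to be this starting-time shift in (ii) and (v) when $\widetilde s$ is not a grid point, specifically making the conditioning and propagation via Lemmas \ref{Lem3.4} and (iv) rigorous. A second risk is the exponent bookkeeping in (iv), where the H\"older interpolation must produce exactly $|\delta|^{1/pm}$.
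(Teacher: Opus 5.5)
Your plan follows the paper's proof part by part. In (i) you use the same split into a lagged exact-solution term and a scheme-versus-exact term, closed with Lemma \ref{Cor2.1}. In (iii) you use Lemma \ref{Lem3.1} or \ref{Lem3.2} together with Lemma \ref{Lem3.4}. In (iv) you use {\bf A1}, part (iii) with $\delta=\iota$, H\"older with $pm$ and its conjugate, and Gronwall. In (ii) and (v) you use the same three-piece triangle inequalities. Your ordering in (ii) is slightly cleaner than the paper's: you move $t\to\widetilde t$ at start $s$, then $s\to\widetilde s$ at time $\widetilde t$. The paper's normalization $s<\widetilde s$, $t=\max\{s,\widetilde s,t,\widetilde t\}$ is not automatic, and it leaves $X^{\delta,x}_{\widetilde s,t}$ undefined when $t<\widetilde s$.

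The step that fails as written is the starting-time shift in (ii) and (v). You condition at the first grid point $\widehat s\ge\widetilde s$ and bound the gap there by Lemma \ref{Lem3.5}(i), resp.\ (ii). That lemma assumes $\delta([0,T])\cap(s,\widehat s)=\emptyset$, i.e.\ that $X^{\delta,x}_{s,\cdot}$ makes a single frozen-coefficient step from $s$ to $\widehat s$. Suppose $\widetilde s$ is not a grid point but some grid point lies in $(s,\widetilde s)$; this is unavoidable once $|\widetilde s-s|>|\delta|$. Then $X^{\delta,x}_{s,\widehat s}$ has gone through several Euler steps, and Lemma \ref{Lem3.5} says nothing about it.

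There is also a smaller issue when $\widetilde s$ is itself a grid point (e.g.\ always for $\delta=\iota$). There you may take $\widehat s=\widetilde s$, but the gap $X^{\delta,x}_{s,\widetilde s}-x$ must then be bounded by Lemma \ref{Lem3.3}(ii). In (v) the gap $X^{\iota,x}_{s,\widetilde s}-X^{\delta,x}_{s,\widetilde s}$ must be bounded by part (i). Lemma \ref{Lem3.5} does not apply in either case.

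The paper closes this with a three-case argument that you need to add:
\begin{enumerate}
\item[(a)] $\widetilde s$ a grid point: bound the gap as just described, then propagate it by Lemma \ref{Lem3.4} in (ii), resp.\ by (iv) with $\widetilde{\mathbf x}=\widetilde{\mathbf y}=x$ and H\"older with $\kappa_1,\kappa_2$ in (v).
\item[(b)] $(s,\widetilde s]$ contains no grid point: your argument applies unchanged.
\item[(c)] Otherwise: insert the largest grid point $\underline s\le\widetilde s$ and use the triangle inequality along $s\to\underline s\to\widetilde s$. Apply case (a) on $[s,\underline s]$ and case (b) on $[\underline s,\widetilde s]$, together with $|\underline s-s|,|\widetilde s-\underline s|\le|\widetilde s-s|$.
\end{enumerate}
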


\begin{proof}
 $\textup{(i)}$ 
%We start by estimating $\left[\mathbb{E}\left[\left|X^{\delta, x}_{s,t}-X^{\iota, x}_{s,t}\right|^p\right]\right]^{1/p}.$ 
First, using equation $\eqref{y01}$, we write $X^{\delta, x}_{s,t}-X^{\iota, x}_{s,t} = P_1 + P_2,$ where
\begin{align*}
  P_1 = &\int_s^t \left(\mu(X_{s,\max\{s,\delta(r)\}}^{\delta,x}) - \mu(X_{s,\max\{s,\delta(r)\}}^{\iota,x})\right) dr + \int_s^t \left( \sigma(X_{s,\max\{s,\delta(r)\}}^{\delta,x}) - \sigma(X_{s,\max\{s,\delta(r)\}}^{\iota,x}) \right) dW_r \notag \\
   & \quad + \int_s^t \int_{\bR_0} \left(\gamma(X_{s,\max\{s,\delta(r-)\}}^{\delta,x})-\gamma(X_{s,\max\{s,\delta(r-)\}}^{\iota,x}) \right) z \widetilde{N}(dr, dz),  \\
    P_2 =  & \int_s^t \left(\mu(X_{s,\max\{s,\delta(r)\}}^{\iota,x}) - \mu(X_{s,r}^{\iota,x}) \right) dr + \int_s^t \left( \sigma(X_{s,\max\{s,\delta(r)\}}^{\iota,x}) - \sigma(X_{s,r}^{\iota,x}) \right) dW_r \notag \\
   & \quad + \int_s^t \int_{\bR_0} \left(\gamma(X_{s,\max\{s,\delta(r-)\}}^{\iota,x})-\gamma(X_{s,r-}^{\iota,x})\right) z \widetilde{N}(dr, dz).
\end{align*}
Then for any $p \ge 2,$ 
\begin{align}
\label{P1,2}
\left[\mathbb{E}\left[\left|X^{\delta, x}_{s,t}-X^{\iota, x}_{s,t}\right|^p\right]\right]^{1/p} \le \left[\mathbb{E}\left[\left|P_1\right|^p\right]\right]^{1/p} + \left[\mathbb{E}\left[\left|P_2\right|^p\right]\right]^{1/p}.    
\end{align}

% For simplicity, we start with the estimate of $\left[\mathbb{E}\left[\left|P_2\right|^p\right]\right]^{1/p}.$
% We have,
% \begin{align*}
% \left[\mathbb{E}\left[\left|P_2\right|^p\right]\right]^{1/p} & =\left[\mathbb{E}\left[\left|\int_s^t \mu(X_{s,\max\{s,\delta(r)\}}^{\iota,x}) - \mu(X_{s,r}^{\iota,x}) dr + \int_s^t \sigma(X_{s,\max\{s,\delta(r)\}}^{\iota,x}) - \sigma(X_{s,r}^{\iota,x}) dW_r \right. \right. \right.\notag \\
%    & \quad + \left. \left. \left. \int_s^t \int_{\bR_0} (\gamma(X_{s,\max\{s,\delta(r-)\}}^{\iota,x})-\gamma(X_{s,r-}^{\iota,x})) z \widetilde{N}(dr, dz)\right|^p\right]\right]^{1/p}.
% \end{align*}
Using the triangle inequality, H\"older's inequality and Burkholder-Davis-Gundy inequalities of the It\^o stochastic integral with respect to Brownian motion and compensated Poisson random measure (see Lemma \ref{BDGjump}), we get
\begin{align}
\left[\mathbb{E}\left[\left|P_1\right|^p\right]\right]^{1/p} 
 & \le \left[\mathbb{E}\left[\left|\int_s^t \left( \mu(X_{s,\max\{s,\delta(r)\}}^{\delta,x}) - \mu(X_{s,\max\{s,\delta(r)\}}^{\iota,x}) \right) dr\right|^p\right]\right]^{1/p} \notag \\
 & \quad + \left[\mathbb{E}\left[\left| \int_s^t \left( \sigma(X_{s,\max\{s,\delta(r)\}}^{\delta,x}) - \sigma(X_{s,\max\{s,\delta(r)\}}^{\iota,x}) \right) dW_r \right|^p\right]\right]^{1/p} \notag \\
   & \quad +\left[\mathbb{E}\left[\left| \int_s^t \int_{\bR_0} \left(\gamma (X_{s,\max\{s,\delta(r-)\}}^{\delta,x})-\gamma (X_{s,\max\{s,\delta(r-)\}}^{\iota,x})\right) z \widetilde{N}(dr, dz)\right|^p\right]\right]^{1/p}\notag \\ 
 %& \le c(t-s)^{\frac{p-1}{p}} \left[ \int_s^t \mathbb{E} \left[ \left| X_{s,\max\{s,\delta(r)\}}^{\delta,x} -X_{s,\max\{s,\delta(r)\}}^{\iota,x} \right|^p\right] dr \right]^{1/p}\notag \\ 
 %& \quad + c (C_{1p})^{1/p} (t-s)^{\frac{p/2-1}{p}} \left[ \int_s^t \mathbb{E} \left[ \left| X_{s,\max\{s,\delta(r)\}}^{\delta,x} -X_{s,\max\{s,\delta(r)\}}^{\iota,x} \right|^p\right] dr \right]^{1/p}\notag \\ 
 %& \quad + c (C_{2p})^{1/p} \left(\int_{\mathbb{R}_0}|z|^p \nu (dz) \right)^{1/p} \left[ \int_s^t \mathbb{E} \left[ \left| X_{s,\max\{s,\delta(r)\}}^{\delta,x} -X_{s,\max\{s,\delta(r)\}}^{\iota,x} \right|^p\right] dr \right]^{1/p}\notag \\
  & \le c |t-s|^{1-\frac{1}{p}} \left[ \int_s^t \mathbb{E} \left[ \left| X_{s,\max\{s,\delta(r)\}}^{\delta,x} -X_{s,\max\{s,\delta(r)\}}^{\iota,x} \right|^p\right] dr \right]^{1/p}\notag \\ 
 & \quad + c C_{1p} |t-s|^{\frac{1}{2}-\frac{1}{p} } \left[ \int_s^t \mathbb{E} \left[ \left| X_{s,\max\{s,\delta(r)\}}^{\delta,x} -X_{s,\max\{s,\delta(r)\}}^{\iota,x} \right|^p\right] dr \right]^{1/p}\notag \\ 
 & \quad + c C_{2p} (|t-s|^{\frac{p}{2}-1}m_2^{\frac{p}{2}}+m_p )^{1/p} \left[ \int_s^t \mathbb{E} \left[ \left| X_{s,\max\{s,\delta(r)\}}^{\delta,x} -X_{s,\max\{s,\delta(r)\}}^{\iota,x} \right|^p\right] dr \right]^{1/p}\notag \\
 & \le c C_p \left[ \int_s^t \mathbb{E} \left[ \left| X_{s,\max\{s,\delta(r)\}}^{\delta,x} -X_{s,\max\{s,\delta(r)\}}^{\iota,x} \right|^p\right] dr \right]^{1/p}.
%& = \left(c_1(t-s)^{\frac{p-1}{p}} + c_1(t-s)^{\frac{p/2-1}{p}} + c_1(m_p)^{1/p}  \right) \left[ \int_s^t \mathbb{E} \left[ \left| X_{s,\max\{s,\delta(r)\}}^{\delta,x} -X_{s,\max\{s,\delta(r)\}}^{i,x} \right|^p\right] dr \right]^{1/p} \notag \\
 \label{P1}
\end{align}

 Next, using the triangle inequality, proceeding as in the proof of 
Lemma $\ref{Lem3.2}$ and the Lipschitz property of the coefficients $\mu$, $\sigma$, $\gamma$ (see  Remark $\ref{Rem1}$), we have
 \begin{align*}
\left[\mathbb{E}\left[\left|P_2\right|^p\right]\right]^{1/p} &  \le \left[\mathbb{E}\left[\left|\int_s^t \left ( \mu(X_{s,\max\{s,\delta(r)\}}^{\iota,x}) - \mu(X_{s,r}^{\iota,x}) \right)dr \right|^p\right]\right]^{1/p} \notag \\
   & \quad  + \left[\mathbb{E}\left[\left| \int_s^t \left( \sigma(X_{s,\max\{s,\delta(r)\}}^{\iota,x}) - \sigma(X_{s,r}^{\iota,x}) \right) dW_r \right|^p\right]\right]^{1/p}\notag \\
    &  \quad + \left[\mathbb{E}\left[\left|\int_s^t \int_{\bR_0} \left(\gamma (X_{s,\max\{s,\delta(r-)\}}^{\iota,x})-\gamma(X_{s,r-}^{\iota,x})\right) z \widetilde{N}(dr, dz)\right|^p\right]\right]^{1/p}\notag\\ 
    & \le  c C_p |t-s|^{1/p} \sup_{r\in [s, t]} \left [\mathbb{E}\left[ \left| X_{s,\max\{s,\delta(r)\}}^{\iota,x} -X_{s,r}^{\iota,x} \right|^p\right] \right]^{1/p}.
\end{align*}
% This completes the estimation of $\left[\mathbb{E}\left[\left(P_2\right)^2\right]\right]^{1/2}$.\\
% Now, we start with the estimate of  $\left[\mathbb{E}\left[\left(P_1\right)^2\right]\right]^{1/2}.$
Applying (ii) of Lemma $\ref{Lem3.3}$ with $X^{\delta, x}_{s,\widetilde{t}}=X_{s,\max\{s,\delta(r)\}}^{\iota,x}$ and $X^{\delta, x}_{s,t} = X_{s,r}^{\iota,x}$, we obtain 
\begin{align}
\left[\mathbb{E}\left[\left|P_2\right|^p\right]\right]^{1/p} & \leq c C_p |t-s|^{1/p} \sup_{r\in [s, t]} C_p | \max\{s,\delta(r)\} - r |^{1/p} (e^{2.5\overline{c}T}V(x))^{1/p}     \notag\\
&\le  c C_p |t-s|^{1/p} |\delta|^{1/p} (e^{2.5\overline{c}T}V(x))^{1/p}.
%\notag  \\
 %& = c C_p |t-s|^{1/p} |\delta|^{1/p} (e^{2.5\overline{c}T}V(x))^{1/p}.
 \label{P2}
\end{align}
 %This completes the estimation of 
%$\left[\mathbb{E}\left[\left|P_2\right|^p\right]\right]^{1/p}$.\\
%  Now, we start with the estimate of  $\left[\mathbb{E}\left[\left|P_1\right|^p\right]\right]^{1/p}.$
% \begin{align*}
% &\left[\mathbb{E}\left[\left|P_1\right|^p\right]\right]^{1/p} \notag \\
%  &= \left[\mathbb{E}\left[\left|\int_s^t \mu(X_{s,\max\{s,\delta(r)\}}^{\delta,x}) - \mu(X_{s,\max\{s,\delta(r)\}}^{\iota,x}) dr + \int_s^t \sigma(X_{s,\max\{s,\delta(r)\}}^{\delta,x}) - \sigma(X_{s,\max\{s,\delta(r)\}}^{\iota,x}) dW_r \right. \right. \right. \notag \\
%    & \quad + \left. \left. \left. \int_s^t \int_{\bR_0} (\gamma (X_{s,\max\{s,\delta(r-)\}}^{\delta,x})-\gamma (X_{s,\max\{s,\delta(r-)\}}^{\iota,x})) z \widetilde{N}(dr, dz)\right|^p\right]\right]^{1/p}.
% \end{align*}
Hence, combining $\eqref{P1,2},$ $\eqref{P1}$ and $\eqref{P2}$, we obtain
\begin{align*}
\left[\mathbb{E}\left[\left|X^{\delta, x}_{s,t}-X^{\iota, x}_{s,t}\right|^p\right]\right]^{1/p} 
& \le c C_p \left[ \int_s^t \mathbb{E} \left[ \left| X_{s,\max\{s,\delta(r)\}}^{\delta,x} -X_{s,\max\{s,\delta(r)\}}^{\iota,x} \right|^p\right] dr \right]^{1/p}\notag \\
& \quad  + c C_p |t-s|^{1/p} |\delta|^{1/p} (e^{2.5\overline{c}T}V(x))^{1/p}.
\end{align*}
Now, applying Lemma $\ref{Cor2.1}$ for $x(t)= \left[\mathbb{E}\left[\left|X^{\delta, x}_{s,t}-X^{\iota, x}_{s,t}\right|^p\right]\right]^{1/p},$ $ a(t)= c C_p |t-s|^{1/p} |\delta|^{1/p} (e^{2.5\overline{c}T}V(x))^{1/p} $ and $c_{\star} = c C_p,$ we get 
\begin{align*}
\left[\mathbb{E}\left[\left|X^{\delta, x}_{s,t}-X^{\iota, x}_{s,t}\right|^p\right]\right]^{1/p} & \leq 2^{1-\frac{1}{p}}  e^{\frac{2^{p-1} (c C_p )^p T}{p}} c C_p |t-s|^{1/p} |\delta|^{1/p} (e^{2.5\overline{c}T}V(x))^{1/p} \notag \\
  & \leq  C_p |t-s|^{1/p} |\delta|^{1/p} (e^{2.5\overline{c}T}V(x))^{1/p},
  % \label{x, wx}
 \end{align*}
for a positive constant $C_p=C_p(c, m_2, m_p, T).$ 
This shows $\textup{(i)}.$\\
 $\textup{(ii)}$ Without loss of generality, we suppose that $s< \widetilde{s}$ and $t = \max\{s, \tilde{s}, t, \tilde{t}\}$.

% Here, we need to estimate $\left[\mathbb{E}\left[\left|X^{\delta, x}_{s,t}-X^{\delta, \widetilde{x}}_{\widetilde{s},\widetilde{t}}\right|^p\right]\right]^{1/p}$ 
%For $\delta \in \widetilde{\mathbb{S}},$ $ s, \widetilde{s}\in [0, T], t\in [s, T], \widetilde{t} \in [\widetilde{s}, T] $, $ x, \widetilde{x} \in \bR$ and $p \ge 2$, 
Using the triangle inequality, we get
\begin{align}
\left[\mathbb{E}\left[\left|X^{\delta, x}_{s,t}-X^{\delta, \widetilde{x}}_{\widetilde{s},\widetilde{t}}\right|^p\right]\right]^{1/p}
%= \left[\mathbb{E}\left[\left|X^{\delta, x}_{s,\max\{s, t\}}-X^{\delta, \widetilde{x}}_{\widetilde{s},\max\{\widetilde{s}, \widetilde{t}\}}\right|^p\right]\right]^{1/p} 
%& \leq  \left[\mathbb{E}\left[\left|X^{\delta, x}_{s,\max\{s, t\}}-X^{\delta, x}_{s,\max\{s, \widetilde{s}, t, \widetilde{t}\}}\right|^p\right]\right]^{1/p} +  \left[\mathbb{E}\left[\left|X^{\delta, x}_{s,\max\{s, \widetilde{s}, t, \widetilde{t}\}} - X^{\delta, x}_{\widetilde{s},\max\{s, \widetilde{s}, t, \widetilde{t}\}}\right|^p\right]\right]^{1/p}   \notag\\
%& \quad +\left[\mathbb{E}\left[\left| X^{\delta, x}_{\widetilde{s},\max\{s, \widetilde{s}, t, \widetilde{t}\}} - X^{\delta, x}_{\widetilde{s},\max\{ \widetilde{s},  \widetilde{t}\}}\right|^p\right]\right]^{1/p} + \left[\mathbb{E}\left[\left|  X^{\delta, x}_{\widetilde{s},\max\{ \widetilde{s},  \widetilde{t}\}}- X^{\delta, \widetilde{x}}_{\widetilde{s},\max\{ \widetilde{s},  \widetilde{t}\}}\right|^p\right]\right]^{1/p} \notag\\
 \le  T_1+ T_2+ T_3,
 \label{t1234}
\end{align}
where 
\begin{align*}
%   &T_1:= \left[\mathbb{E}\left[\left|X^{\delta, x}_{s,\max\{s, t\}}-X^{\delta, x}_{s,\max\{s, \widetilde{s}, t, \widetilde{t}\}}\right|^p\right]\right]^{1/p},\\
   &T_1:= \left[\mathbb{E}\left[\left|X^{\delta, x}_{s,t} - X^{\delta, x}_{\widetilde{s},t}\right|^p\right]\right]^{1/p}, \quad 
   &T_2:= \left[\mathbb{E}\left[\left| X^{\delta, x}_{\widetilde{s},t} - X^{\delta, x}_{\widetilde{s},  \widetilde{t}}\right|^p\right]\right]^{1/p}, \quad 
   &T_3:= \left[\mathbb{E}\left[\left|  X^{\delta, x}_{\widetilde{s}, \widetilde{t}}- X^{\delta, \widetilde{x}}_{\widetilde{s},  \widetilde{t}}\right|^p\right]\right]^{1/p}.
\end{align*}
 First, applying (ii) of Lemma $\ref{Lem3.3}$ for $T_2,$ and Lemma $\ref{Lem3.4}$ for $T_3$, we obtain
 \begin{align}
  % &T_1 \le  C_p (e^{2.5\overline{c}T}V(x))^{1/p} |\max \{s, t\} - \max \{s, \widetilde{s}, t, \widetilde{t}\}|^{1/p}, \label{iii1}\\
  &T_2 \le C_p | t- \widetilde{t} |^{1/p} (e^{2.5\overline{c}T}V(x))^{1/p} ,  \label{iii3}\\
     &T_3 \le C_p |x-\widetilde{x}|.\label{iii4}
\end{align}
%Next, since $t=\max\{s, \widetilde{s}, t, \widetilde{t}\}$,  we have  $ T_2=\left[\mathbb{E}\left[\left|X^{\delta, x}_{s,t} - X^{\delta, x}_{\widetilde{s},t}\right|^p\right]\right]^{1/p}$ . 
We next estimate the term $T_1$ by considering the following cases.

 \begin{figure}[ht]
\centering
% (Case1)
\begin{subfigure}{0.45\textwidth}
\captionsetup{labelformat=empty}
\centering
\begin{tikzpicture}
  \draw[thick] (0,0) -- (4,0);
  \node at (0,0) [below] {$s$};
  \node at (2,0) [below] {$\widetilde{s}$};
  \node at (4,0) [below] {$t$};
  \foreach \x in {0,2,4}
    \draw (\x,0.1)--(\x,-0.1);
  \node at (2,0) {$\times$};
\end{tikzpicture}
\caption{Case 1: $\widetilde{s}$ is a grid point.}
\end{subfigure}
\hfill\\
% (Case2)
% \begin{subfigure}{0.45\textwidth}
% \captionsetup{labelformat=empty}
% \centering
% \begin{tikzpicture}
%   % trục
%   \draw[thick] (0,0) -- (4,0);
%   % các điểm
%   \node at (0,0) [below] {$s$};
%   \node at (2,0) [below] {$\widetilde{s}$};
%   \node at (4,0) [below] {$t$};
%   % tick
%   \foreach \x in {0,2,4}
%     \draw (\x,0.1)--(\x,-0.1);
% \end{tikzpicture}
% \caption{Case 2: $\widetilde{s}$ is not a grid point.}
% \end{subfigure}
% \hfill\\
% (b)
\begin{subfigure}{0.45\textwidth}
\captionsetup{labelformat=empty}
\centering
\begin{tikzpicture}
  \draw[thick] (0,0) -- (5,0);
  \node at (0,0) [below] {$s$};
  \node at (1.5,0) [below] {$\widetilde{s}$};
  \node at (3,0) [below] {$\bar{s}$};
  \node at (5,0) [below] {$t$};
  % tick
  \foreach \x in {0,1.5,3,5}
    \draw (\x,0.1)--(\x,-0.1);
  % grid point
  \node at (3,0) {$\times$};
\end{tikzpicture}
\caption{Case 2: $\bar{s}$ is the smallest grid point on $(\widetilde{s},t]$.}
\end{subfigure}
\hfill
% (d)
\begin{subfigure}{0.45\textwidth}
\captionsetup{labelformat=empty}
\centering
\begin{tikzpicture}
  \draw[thick] (0,0) -- (5,0);
  \node at (0,0) [below] {$s$};
  \node at (1.5,0) [below] {$\underline{s}$};
  \node at (3.5,0) [below] {$\widetilde{s}$};
  \node at (5,0) [below] {$t$};
  \foreach \x in {0,1.5,3.5,5}
    \draw (\x,0.1)--(\x,-0.1);
  \node at (1.5,0) {$\times$};
\end{tikzpicture}
\caption{Case 3: $\underline{s}$ is the largest grid point on $[s,\widetilde{s})$.}
\end{subfigure}

\caption{An illustration for the case distinction. A grid point is drawn by $\times$.}
\end{figure}

\noindent\underline{Case 1}: $\widetilde{s}$ \text{ is a grid point.}

%Let $\delta \in \widetilde{\mathbb{S}},$ $x \in \bR,$ $s\in [0, T],$ $\widetilde{s} \in \delta([0, T])\cap [s, T],$ $t\in [\widetilde{s}, T]$. 
Applying the Markov property and Lemma $\ref{Lem3.4},$ we have
\begin{align}
    \label{s, ws}  \left[\mathbb{E}\left[\left|X^{\delta, x}_{s,t}-X^{\delta, x}_{\widetilde{s},t}\right|^p\right]\right]^{1/p} &= \left(\mathbb{E} \left[ \left(\left[\mathbb{E}\left[\left|X^{\delta,\eta}_{\widetilde{s},t}-X^{\delta, x}_{\widetilde{s},t}\right|^p\right]\right]^{1/p}\Big | _{\eta = X^{\delta, x}_{s,\widetilde{s}} }\right)^p\right] \right)^{1/p} \notag \\
    & \leq C_p \left[\mathbb{E}\left[\left|X^{\delta, x}_{s,\widetilde{s}}-x \right|^p\right]\right]^{1/p}.  \end{align}
Using equation $\eqref{y01},$ the triangle inequality and proceeding as in the proof of (ii) of Lemma $\ref{Lem3.3}$, we get
\begin{align}
\label{18}
\left[\mathbb{E}\left[\left|X^{\delta, x}_{s,\widetilde{s}}-x \right|^p\right]\right]^{1/p} 
 %& = \left[\mathbb{E}\left[\left| \int_s^{\widetilde{s}} \mu(X_{s,\max\{s,\delta(r)\}}^{\delta,x}) dr + \int_s^{\widetilde{s}} \sigma(X_{s,\max\{s,\delta(r)\}}^{\delta,x}) dW_r \right. \right. \right. \notag \\  
% & \quad \quad \quad \quad + \left. \left. \left. \int_s^{\widetilde{s}} \int _{\bR_0}\gamma (X_{s-,\max\{s,\delta(r-)\}}^{\delta,x}) z \widetilde{N}(dr, dz)   \right|^p\right]\right]^{1/p}   \notag \\
 &\leq \left[\mathbb{E} \left[\left| \int_s^{\widetilde{s}} \mu(X_{s,\max\{s,\delta(r)\}}^{\delta,x}) dr \right|^p \right]\right]^{1/p} + \left[\mathbb{E} \left[\left| \int_s^{\widetilde{s}} \sigma(X_{s,\max\{s,\delta(r)\}}^{\delta,x}) dW_r \right|^p \right]\right]^{1/p} \notag \\
& \quad  + \left[\mathbb{E} \left[\left| \int_s^{\widetilde{s}} \int _{\bR_0}\gamma (X_{s,\max\{s,\delta(r-)\}}^{\delta,x}) z \widetilde{N}(dr, dz) \right|^p\right]\right]^{1/p} \notag \\
&  \leq  C_p |\widetilde{s} -s|^{1/p} (e^{2.5\overline{c}T}V(x))^{1/p}. \end{align}
Therefore, combining $\eqref{s, ws}$ and $\eqref{18}$, we obtain
    \begin{align}
T_1 = \left[\mathbb{E}\left[\left|X^{\delta, x}_{s,t}-X^{\delta, x}_{\widetilde{s},t}\right|^p\right]\right]^{1/p} &\leq C_p |\widetilde{s} -s|^{1/p} (e^{2.5\overline{c}T}V(x))^{1/p},
\label{cA2}
\end{align}
for a positive constant $C_p=C_p(c, m_2, m_p, T).$ 

\noindent\underline{Case 2}: \text{$\widetilde{s}$ is not a grid point and there is no grid point on $[s, \widetilde{s}]$}.

Let $\overline{s}$ be the smallest grid point on $[\widetilde{s}, t].$
%Let $\delta \in \widetilde{\mathbb{S}},$ $x \in \bR,$ $s\in [0, T],$ $\widetilde{s} \in [s, T],$ $\overline{s} \in [\widetilde{s},T],$ $\delta([0, T])\cap (s, \overline{s}) = \emptyset $, $\delta([0, T])\cap (s, \widetilde{s}) = \emptyset $ and  $\overline{s}= \delta([0, T])\cap [\widetilde{s}, T] $. This implies that 
Thus, there is no grid point on $[s, \overline{s})$. Applying the Markov property, Lemma $\ref{Lem3.4}$ 
%for $x=X^{\delta, x}_{s,\overline{s}}$, $ \widetilde{x}=X^{\delta, x}_{\widetilde{s},\overline{s}}$ 
and (i) of Lemma $\ref{Lem3.5},$  we have
\begin{align} T_1 =\left[\mathbb{E}\left[\left|X^{\delta, x}_{s,t}-X^{\delta, x}_{\widetilde{s},t}\right|^p\right]\right]^{1/p} &= \left(\mathbb{E}\left[\left(\left[\mathbb{E}\left[\left|X^{\delta,\eta}_{\overline{s},t}-X^{\delta, \widetilde{\eta}}_{\overline{s},t}\right|^p\right]\right]^{1/p}\Big | _{\eta = X^{\delta, x}_{s,\overline{s}}, \widetilde{\eta} = X^{\delta, x}_{\widetilde{s},\overline{s}} } \right)^p\right]\right)^{1/p} \notag   \\
 & \leq \left(\mathbb{E}\left[\left( C_p \left[\mathbb{E}\left[\left|\eta-\widetilde{\eta}\right|^p\right]\right]^{1/p}\Big | _{\eta = X^{\delta, x}_{s,\overline{s}}, \widetilde{\eta} = X^{\delta, x}_{\widetilde{s},\overline{s}} } \right)^p\right]\right)^{1/p} \notag   \\
 & = C_p \left[\mathbb{E}\left[\left|X^{\delta, x}_{s,\overline{s}}-X^{\delta, x}_{\widetilde{s},\overline{s}}\right|^p\right]\right]^{1/p}  \notag\\
% &\le  C_p C_p |\widetilde{s} -s|^{1/p} (e^{2.5\overline{c}T}V(x))^{1/p} \notag   \\
 & \le C_p |\widetilde{s} -s|^{1/p} (e^{2.5\overline{c}T}V(x))^{1/p},
 \label{stiles}
\end{align}
for a positive constant $C_p=C_p(c, m_2, m_p, T).$ 

\noindent\underline{Case 3}: \text{$\widetilde{s}$ is not a grid point and there is a grid point on $[s, \widetilde{s}]$}.

Let $\underline{s}$ be the largest grid point on $[s,\widetilde{s}].$ Then, using the fact that $\underline{s}$  is the grid point and proceeding as in the proof of $\eqref{cA2}$ of Case $1$, we get
 \begin{align*} \left[\mathbb{E}\left[\left|X^{\delta, x}_{s,t}-X^{\delta, x}_{\underline{s},t}\right|^p\right]\right]^{1/p} \leq C_p |\underline{s} -s|^{1/p} (e^{2.5\overline{c}T}V(x))^{1/p} \leq C_p |\widetilde{s} -s|^{1/p} (e^{2.5\overline{c}T}V(x))^{1/p}.
\end{align*}
Next, using the fact that there is no grid point on  $(\underline{s}, \widetilde{s}]$ and proceeding as in the proof of $\eqref{stiles}$ of Case 2, we get
\begin{align*}
\left[\mathbb{E}\left[\left|X^{\delta, x}_{\underline{s},t}-X^{\delta, x}_{\widetilde{s},t}\right|^p\right]\right]^{1/p}\leq  C_p |\widetilde{s} -\underline{s}|^{1/p} (e^{2.5\overline{c}T}V(x))^{1/p}\leq  C_p |\widetilde{s} -s|^{1/p} (e^{2.5\overline{c}T}V(x))^{1/p}.
\end{align*}
Therefore, combining all the computations above yields 
\begin{align*}
T_1 = \left[\mathbb{E}\left[\left|X^{\delta, x}_{s,t}-X^{\delta, x}_{\widetilde{s},t}\right|^p\right]\right]^{1/p} & \leq  \left[\mathbb{E}\left[\left|X^{\delta, x}_{s,t}-X^{\delta, x}_{\underline{s},t}\right|^p\right]\right]^{1/p} + \left[\mathbb{E}\left[\left|X^{\delta, x}_{\underline{s},t}-X^{\delta, x}_{\widetilde{s},t}\right|^p\right]\right]^{1/p}\\
%&\leq 2C_p |\widetilde{s} -s|^{1/p} (e^{2.5\overline{c}T}V(x))^{1/p}\\
&\leq C_p |\widetilde{s} -s|^{1/p} (e^{2.5\overline{c}T}V(x))^{1/p}.
\end{align*}
Consequently, we obtain the following estimate of $T_1$ in all cases
 \begin{align} T_1 \leq C_p |\widetilde{s} -s|^{1/p} (e^{2.5\overline{c}T}V(x))^{1/p}.
 \label{allc}
 \end{align} 
% We recall 
% \begin{align*}
% &\left[\mathbb{E}\left[\left|X^{\delta, x}_{s,t}-X^{\delta, \widetilde{x}}_{\widetilde{s},\widetilde{t}}\right|^p\right]\right]^{1/p}= \left[\mathbb{E}\left[\left|X^{\delta, x}_{s,\max\{s, t\}}-X^{\delta, \widetilde{x}}_{\widetilde{s},\max\{\widetilde{s}, \widetilde{t}\}}\right|^p\right]\right]^{1/p} \notag\\
% & \leq  T_1+ T_2+T_3+T_4.
% \end{align*}
From $\eqref{t1234}$,  $\eqref{iii3}$, $\eqref{iii4}$ and $\eqref{allc}$, we obtain
\begin{align*}
\left[\mathbb{E}\left[\left|X^{\delta, x}_{s,t}-X^{\delta, \widetilde{x}}_{\widetilde{s}, \widetilde{t}}\right|^p\right]\right]^{1/p} 
& \leq 
 C_p |\widetilde{s} -s|^{1/p} (e^{2.5\overline{c}T}V(x))^{1/p}  + C_p | t -  \widetilde{t} |^{1/p}  (e^{2.5\overline{c}T}V(x))^{1/p}   + C_p |x-\widetilde{x}|,
\end{align*}
which implies $\textup{(ii)}.$\\
(iii) 
%Now, we estimate $\left[\mathbb{E}\left[\left|\left(X^{\delta, x}_{s,\widetilde{t}}-X^{\delta,x}_{s,t}\right) - \left(X^{\delta, \widetilde{x}}_{s,\widetilde{t}}-X^{\delta, \widetilde{x}}_{s,t}\right)\right|^p\right]\right]^{1/p}.$ 
Without loss of generality, we suppose that $t<\widetilde{t}.$ Using equation $\eqref{y01}$ and applying Lemma $\ref{Lem3.2},$ together with Lemma $\ref{Lem3.4}$, we get
\begin{align*}
   & \left[\mathbb{E}\left[\left|\left(X^{\delta, x}_{s,\widetilde{t}}-X^{\delta,x}_{s,t}\right) - \left(X^{\delta, \widetilde{x}}_{s,\widetilde{t}}-X^{\delta, \widetilde{x}}_{s,t}\right)\right|^p\right]\right]^{1/p} \notag\\
  & =  \left[\mathbb{E} \left[\left| \int_t^{\widetilde{t} } \left ( \mu(X_{s,\max\{s,\delta(r)\}}^{\delta,x}) -\mu(X_{s,\max\{s,\delta(r)\}}^{\delta,\widetilde{x}}) \right) dr + \int_t^{\widetilde{t}} \left ( \sigma(X_{s,\max\{s,\delta(r)\}}^{\delta,x}) -\sigma(X_{s,\max\{s,\delta(r)\}}^{\delta,\widetilde{x}}) \right) dW_r \right. \right. \right.  \notag\\
  & \quad  \left. \left. \left. + \int_t^{\widetilde{t}} \int_{\bR_0} \left (\gamma (X_{s,\max\{s,\delta(r-)\}}^{\delta,x})-\gamma (X_{s,\max\{s,\delta(r-)\}}^{\delta,\widetilde{x}})\right) z \widetilde{N}(dr, dz) \right|^p\right] \right]^{1/p}   \notag\\
  & \leq  \left[\mathbb{E} \left[\left| \int_t^{\widetilde{t} } \left( \mu(X_{s,\max\{s,\delta(r)\}}^{\delta,x}) -\mu(X_{s,\max\{s,\delta(r)\}}^{\delta,\widetilde{x}}) \right) dr \right|^p\right] \right]^{1/p}  \notag\\
  & \quad + \left[\mathbb{E} \left[\left| \int_t^{\widetilde{t} } \left( \sigma(X_{s,\max\{s,\delta(r)\}}^{\delta,x}) -\sigma(X_{s,\max\{s,\delta(r)\}}^{\delta,\widetilde{x}})\right) dW_r \right|^p\right] \right]^{1/p}  \notag\\
   & \quad + \left[\mathbb{E} \left[\left| \int_t^{\widetilde{t}} \int_{\bR_0} \left(\gamma (X_{s,\max\{s,\delta(r-)\}}^{\delta,x})-\gamma (X_{s,\max\{s,\delta(r-)\}}^{\delta,\widetilde{x}})\right) z \widetilde{N}(dr, dz) \right|^p\right] \right]^{1/p}  \notag\\
    & \leq c  C_p  |\widetilde{t} -t|^{1/p} \sup_{r\in [t, \widetilde{t}]} \left[ \mathbb{E} \left[\left| X_{s,\max\{s,\delta(r)\}}^{\delta,x}-X_{s,\max\{s,\delta(r)\}}^{\delta,\widetilde{x}} \right|^p \right]\right]^{1/p} \notag\\
%  & \leq c  C_p  |t-\widetilde{t}|^{1/p} C_p |x-\widetilde{x}| \notag\\
 % & \leq c  C_p  |t-\widetilde{t}|^{1/p} C_p |x-\widetilde{x}|\notag\\
  & \le C_p |t-\widetilde{t}|^{1/p} |x-\widetilde{x}|.
\end{align*}
This shows $\textup{(iii)}.$\\
$\textup{(iv)}$ 
%Here, we need to estimate $\left[\mathbb{E}\left[\left|\left(X^{\iota, x}_{s,t}-X^{\delta,y}_{s,t}\right) - \left(X^{\iota, \widetilde{x}}_{s,t}-X^{\delta, \widetilde{y}}_{s,t}\right)\right|^p\right]\right]^{1/p}.$ 
First, using equation $\eqref{y01},$ we rewrite
\begin{align*}
   & \left(X^{\iota, x}_{s,t}-X^{\delta,y}_{s,t}\right) - \left(X^{\iota, \widetilde{x}}_{s,t}-X^{\delta, \widetilde{y}}_{s,t}\right) \notag\\
   & = (x-y)-(\widetilde{x}-\widetilde{y}) + \int_s^t \left[ \left(\mu(X_{s,r}^{\iota,x}) -\mu(X_{s,\max\{s,\delta(r)\}}^{\delta,y}) \right) - \left(\mu(X_{s,r}^{\iota,\widetilde{x}}) -\mu(X_{s,\max\{s,\delta(r)\}}^{\delta,\widetilde{y}}) \right)\right] dr  \notag\\
   & \quad  + \int_s^t \left[ \left(\sigma(X_{s,r}^{\iota,x}) -\sigma(X_{s,\max\{s,\delta(r)\}}^{\delta,y}) \right) - \left(\sigma(X_{s,r}^{\iota,\widetilde{x}}) -\sigma(X_{s,\max\{s,\delta(r)\}}^{\delta,\widetilde{y}}) \right)\right] dW_r  \notag\\
    & \quad + \int_s^t \int_{\bR_0} \left[ \left(\gamma(X_{s,r-}^{\iota,x}) -\gamma (X_{s,\max\{s,\delta(r-)\}}^{\delta,y}) \right) - \left(\gamma(X_{s,r-}^{\iota,\widetilde{x}}) -\gamma (X_{s,\max\{s,\delta(r-)\}}^{\delta,\widetilde{y}}) \right) \right] z \widetilde{N}(dr, dz).  
\end{align*}
Then, applying the triangle inequality and proceeding as in the proof of Lemma $\ref{Lem3.4}$, we get
\begin{align}
   &\left[\mathbb{E}\left[\left| \left(X^{\iota, x}_{s,t}-X^{\delta,y}_{s,t}\right) - \left(X^{\iota, \widetilde{x}}_{s,t}-X^{\delta, \widetilde{y}}_{s,t}\right)  
  \right|^p\right]\right]^{1/p} \notag\\
   & \le |(x-y)-(\widetilde{x}-\widetilde{y})|  \notag\\
   &\quad +\left[\mathbb{E}\left[\left|  \int_s^t \left[ \left( \mu(X_{s,r}^{\iota,x}) -\mu(X_{s,\max\{s,\delta(r)\}}^{\delta,y}) \right) - \left(\mu(X_{s,r}^{\iota,\widetilde{x}}) -\mu(X_{s,\max\{s,\delta(r)\}}^{\delta,\widetilde{y}}) \right) \right] dr  \right|^p\right]\right]^{1/p}   \notag\\
   & \quad+ \left[\mathbb{E}\left[\left| \int_s^t \left[ \left(\sigma(X_{s,r}^{\iota,x}) -\sigma(X_{s,\max\{s,\delta(r)\}}^{\delta,y}) \right) - \left(\sigma(X_{s,r}^{\iota,\widetilde{x}}) -\sigma(X_{s,\max\{s,\delta(r)\}}^{\delta,\widetilde{y}}) \right) \right] dW_r   \right|^p\right]\right]^{1/p} 
 \notag\\
    &\quad + \left[\mathbb{E}\left[\left| \int_s^t \int_{\bR_0} \left[ \left(\gamma(X_{s,r-}^{\iota,x}) -\gamma (X_{s,\max\{s,\delta(r-)\}}^{\delta,y}) \right) - \left(\gamma(X_{s,r-}^{\iota,\widetilde{x}}) -\gamma (X_{s,\max\{s,\delta(r-)\}}^{\delta,\widetilde{y}}) \right) \right] z \widetilde{N}(dr, dz)\right|^p\right]\right]^{1/p}\notag\\ 
    & \leq |(x-y)-(\widetilde{x}-\widetilde{y})|   \notag\\
& \quad + C_p   \left[ \int_s^t \max_{\xi \in \{\mu, \sigma, \gamma\}} \mathbb{E}\left[\left|\left(\xi(X^{\iota, x}_{s,r})-\xi(X^{\delta,y}_{s,\max \{s, \delta(r)\}})\right) - \left(\xi(X^{\iota, \widetilde{x}}_{s,r})-\xi(X^{\delta, \widetilde{y}}_{s,\max \{s, \delta(r)\}})\right)\right|^p\right] dr\right]^{1/p} \notag\\
    & = |(x-y)-(\widetilde{x}-\widetilde{y})|   \notag\\
 & \quad + C_p  \left[ \int_s^t \max_{\xi \in \{\mu, \sigma, \gamma\}} \left [\left [\mathbb{E}\left[\left|\left(\xi(X^{\iota, x}_{s,r})-\xi(X^{\delta,y}_{s,\max \{s, \delta(r)\}})\right) - \left(\xi(X^{\iota, \widetilde{x}}_{s,r})-\xi(X^{\delta, \widetilde{y}}_{s,\max \{s, \delta(r)\}})\right)\right|^p\right]\right]^{1/p}\right]^p dr\right]^{1/p}. 
\label{idelxy}
\end{align}
% Applying Lemma $\ref{Lem3.1}$, we obtain that
% \begin{align}
% &\left[\mathbb{E}\left[\left|\left(X^{\iota, x}_{s,t}-X^{\delta,y}_{s,t}\right) - \left(X^{\iota, \widetilde{x}}_{s,t}-X^{\delta, \widetilde{y}}_{s,t}\right)\right|^p\right]\right]^{1/p} \notag\\
%& \leq |(x-y)-(\widetilde{x}-\widetilde{y})|  + C_p  |t-s|^{1/p}  \notag\\
% & \quad \quad \times \sup_{r\in [s, t]} \max_{\xi \in \{\mu, \sigma, \gamma\}} \left[ \mathbb{E}\left[\left|\left(\xi(X^{\iota, x}_{s,r})-\xi(X^{\delta,y}_{s,\max \{s, \delta(r)\}})\right) - \left(\xi(X^{\iota, \widetilde{x}}_{s,t})-\xi(X^{\delta, \widetilde{y}}_{s,\max \{s, \delta(r)\}})\right)\right|^p\right] \right]^{1/p}.
% \label{v1}
% \end{align}
% We consider $\left[\mathbb{E}\left[\left|X^{\iota, x}_{s,t}-X^{\delta, y}_{s,\max\{s, \delta(t)\}}\right|^p\right]\right]^{1/p}$. 
Moreover, for $\xi \in \{\mu, \sigma, \gamma\}$, using  $\bf{A1}$, we have
\begin{align*}
& \left|\left(\xi(X^{\iota, x}_{s,r})-\xi(X^{\delta,y}_{s,\max\{s, \delta(r)\}})\right) - \left(\xi(X^{\iota, \widetilde{x}}_{s,r})-\xi(X^{\delta, \widetilde{y}}_{s,\max\{s, \delta(r)\}})\right)\right| \notag\\
& \leq c \left[\left(X^{\iota, x}_{s,r}-X^{\delta,y}_{s,\max\{s, \delta(r)\}}\right) - \left(X^{\iota, \widetilde{x}}_{s,r}-X^{\delta, \widetilde{y}}_{s,\max\{s, \delta(r)\}}\right)\right] \notag\\
& \quad + b \dfrac{\left|X^{\iota, x}_{s,r}-X^{\delta,y}_{s,\max\{s, \delta(r)\}}\right| +  \left|X^{\iota, \widetilde{x}}_{s,r}-X^{\delta, \widetilde{y}}_{s,\max\{s, \delta(r)\}}\right|}{2} \left|X^{\iota, x}_{s,r} - X^{\iota, \widetilde{x} }_{s,r}\right|.  
\end{align*}
Therefore, using the triangle inequality and H\"older inequality with $\frac{1}{m}+\frac{1}{n}=1$, we get
\begin{align}
& \left[\mathbb{E}\left[\left|\left(\xi(X^{\iota, x}_{s,r})-\xi(X^{\delta,y}_{s,\max\{s, \delta(r)\}})\right) - \left(\xi(X^{\iota, \widetilde{x}}_{s,r})-\xi(X^{\delta, \widetilde{y}}_{s,\max\{s, \delta(r)\}})\right)\right|^p\right]\right]^{1/p} \notag\\
& \leq c \left[\mathbb{E}\left[\left|\left(X^{\iota, x}_{s,r}-X^{\delta,y}_{s,\max\{s, \delta(r)\}}\right) - \left(X^{\iota, \widetilde{x}}_{s,r}-X^{\delta, \widetilde{y}}_{s,\max\{s, \delta(r)\}}\right)\right|^p\right]\right]^{1/p}   \notag\\
& \quad + b \dfrac{\left[\mathbb{E}\left[\left|X^{\iota, x}_{s,r}-X^{\delta,y}_{s,\max\{s, \delta(r)\}}\right|^{pm}\right]\right]^{1/pm} + \left[\mathbb{E} \left[\left|X^{\iota, \widetilde{x}}_{s,r}-X^{\delta, \widetilde{y}}_{s,\max\{s, \delta(r)\}}\right|^{pm}\right]\right]^{1/pm}}{2} \left[\mathbb{E}\left[\left|X^{\iota, x}_{s,r} - X^{\iota, \widetilde{x} }_{s,r}\right|^{pn}\right]\right]^{1/pn}    \notag\\
& \leq c \left( \left[\mathbb{E}\left[\left|\left(X^{\iota, x}_{s,\max\{s, \delta(r)\}}-X^{\delta,y}_{s,\max\{s, \delta(r)\}}\right) - \left(X^{\iota, \widetilde{x}}_{s,\max\{s, \delta(r)\}}-X^{\delta, \widetilde{y}}_{s,\max\{s, \delta(r)\}}\right)\right|^p\right]\right]^{1/p} \right.\notag\\
& \quad +  \left. \left[\mathbb{E}\left[\left|\left(X^{\iota, x}_{s,r}-X^{\iota,x}_{s,\max\{s, \delta(r)\}}\right) - \left(X^{\iota, \widetilde{x}}_{s,r}-X^{\iota, \widetilde{x}}_{s,\max\{s, \delta(r)\}}\right)\right|^p\right]\right]^{1/p} \right)  \notag\\ 
  & \quad + b \dfrac{\left[\mathbb{E}\left[\left|X^{\iota, x}_{s,r}-X^{\delta,y}_{s,\max\{s, \delta(r)\}}\right|^{pm}\right]\right]^{1/pm} + \left[\mathbb{E} \left[\left|X^{\iota, \widetilde{x}}_{s,r}-X^{\delta, \widetilde{y}}_{s,\max\{s, \delta(r)\}}\right|^{pm}\right]\right]^{1/pm}}{2} \left[\mathbb{E}\left[\left|X^{\iota, x}_{s,r} - X^{\iota, \widetilde{x} }_{s,r}\right|^{pn}\right]\right]^{1/pn}.
  %... & \leq c \left( \left[\mathbb{E}\left[\left|\left(X^{\iota, x}_{s,\max\{s, \delta(r)\}}-X^{\delta,y}_{s,\max\{s, \delta(r)\}}\right) - \left(X^{\iota, \widetilde{x}}_{s,\max\{s, \delta(r)\}}-X^{\delta, \widetilde{y}}_{s,\max\{s, \delta(r)\}}\right)\right|^p\right] \right]^{1/p} \right. \notag\\
  % &\quad  + \left. C_p  |r-\max\{s, \delta(r)\}|^{1/p} |x-\widetilde{x}| \right) \notag\\
  % & \quad +b \Biggl[ C_p |\delta|^{1/pm} (e^{2.5\overline{c}T})^{1/pm} \frac{ (V(x))^{1/pm}+(V(\widetilde{x}))^{1/pm}}{2}  + C_p \dfrac{|x-y|+ |\widetilde{x}-\widetilde{y}|}{2}  \Biggr] C_p |x-\widetilde{x}|...
  %& \leq c \left[\mathbb{E}\left[\left|\left(X^{\iota, x}_{s,\widetilde{t}}-X^{\delta,y}_{s,\widetilde{t}}\right) - \left(X^{i, \widetilde{x}}_{s,\widetilde{t}}-X^{\delta, \widetilde{y}}_{s,\widetilde{t}}\right)\right|^p\right]\right]^{1/p}  \notag\\
 % &\quad  +2(c^2+bc+b) \left[\sqrt{T} + 1+ \sqrt{m_2} \right]^3 e^{c^2\left[\sqrt{T} + 2+ \sqrt{m_2} \right]^2T} e^{\overline{c}T} \dfrac{\sqrt{V(x)}+\sqrt{V(\widetilde{x})}}{2}|\delta|^{1/2} |x-\widetilde{x}| \notag\\
  % &\quad + 2b e^{2c^2\left[\sqrt{T} + 1+ \sqrt{m_2} \right]^2T} \dfrac{(|x-y|+|\widetilde{x}-\widetilde{y}|)|x-\widetilde{x}|}{2}.
    \label{v2.1}
\end{align}
Now, applying the triangle inequality, (ii) of Lemma $\ref{Lem3.3}$, the statement (i) above, and Lemma $\ref{Lem3.4}$, we have for $q \ge 2,$ 
\begin{align}
&\left[\mathbb{E}\left[\left|X^{\iota, x}_{s,r}-X^{\delta, y}_{s,\max\{s, \delta(r)\}}\right|^q\right]\right]^{1/q} \notag\\ 
& \leq \left[\mathbb{E}\left[\left|X^{\iota, x}_{s,r}-X^{\iota, x}_{s,\max\{s, \delta(r)\}}\right|^q\right]\right]^{1/q} + \left[\mathbb{E}\left[\left|X^{\iota, x}_{s,\max\{s, \delta(r)\}} - X^{\delta, x}_{s,\max\{s, \delta(r)\}}\right|^q\right]\right]^{1/q} \notag\\ 
& \quad + \left[\mathbb{E}\left[\left|X^{\delta, x}_{s,\max\{s, \delta(r)\}} - X^{\delta, y }_{s,\max\{s, \delta(r)\}}\right|^q\right]\right]^{1/q} \notag\\ 
& \leq C_q |\max\{s, \delta(r)\}-r|^{1/q} (e^{2.5\overline{c}T}V(x))^{1/q} + C_q |\max\{s, \delta(r)\}-s|^{1/q} |\delta|^{1/q} (e^{2.5\overline{c}T}V(x))^{1/q} + C_q |x-y| \notag\\
& \leq C_q |\delta|^{1/q} (e^{2.5\overline{c}T}V(x))^{1/q} + C_q T^{1/q} |\delta|^{1/q} (e^{2.5\overline{c}T}V(x))^{1/q} + C_q |x-y| \notag\\ 
& \leq C_q |\delta|^{1/q} (e^{2.5\overline{c}T}V(x))^{1/q} + C_q |x-y|,
\label{vm}
%& \leq \left[ \sqrt{2}c+1 \right]\left[\sqrt{T} + 1+ \sqrt{m_2} \right]^3 e^{c^2\left[\sqrt{T} + 1+ \sqrt{m_2} \right]^2T} e^{\overline{c}T} \sqrt{V(x)}|\delta|^{1/2} + \sqrt{2} e^{c^2\left[\sqrt{T} + 1+ \sqrt{m_2} \right]^2T } |x-y|.
\end{align}
for a positive constant $C_q$.

Consequently, using 
$\eqref{v2.1},$ the aforementioned statement (iii), the estimate $\eqref{vm}$ above with $q=pm$, and Lemma $\ref{Lem3.4}$, we get
\begin{align}
  \label{v2}
& \left[\mathbb{E}\left[\left|\left(\xi(X^{\iota, x}_{s,r})-\xi(X^{\delta,y}_{s,\max\{s, \delta(r)\}})\right) - \left(\xi(X^{\iota, \widetilde{x}}_{s,r})-\xi(X^{\delta, \widetilde{y}}_{s,\max\{s, \delta(r)\}})\right)\right|^p\right]\right]^{1/p} \notag\\
    & \leq c \left( \left[\mathbb{E}\left[\left|\left(X^{\iota, x}_{s,\max\{s, \delta(r)\}}-X^{\delta,y}_{s,\max\{s, \delta(r)\}}\right) - \left(X^{\iota, \widetilde{x}}_{s,\max\{s, \delta(r)\}}-X^{\delta, \widetilde{y}}_{s,\max\{s, \delta(r)\}}\right)\right|^p\right] \right]^{1/p} \right. \notag\\
   &\quad  + \left. C_p  |r-\max\{s, \delta(r)\}|^{1/p} |x-\widetilde{x}| \right) \notag\\
   & \quad +b \Biggl[ C_{pm} |\delta|^{1/pm} (e^{2.5\overline{c}T})^{1/pm} \frac{ (V(x))^{1/pm}+(V(\widetilde{x}))^{1/pm}}{2}  + C_{pm} \dfrac{|x-y|+ |\widetilde{x}-\widetilde{y}|}{2}  \Biggr] C_{pn} |x-\widetilde{x}|\notag\\
   & \leq c \left( \left[\mathbb{E}\left[\left|\left(X^{\iota, x}_{s,\max\{s, \delta(r)\}}-X^{\delta,y}_{s,\max\{s, \delta(r)\}}\right) - \left(X^{\iota, \widetilde{x}}_{s,\max\{s, \delta(r)\}}-X^{\delta, \widetilde{y}}_{s,\max\{s, \delta(r)\}}\right)\right|^p\right] \right]^{1/p} +  C_p  |\delta|^{1/p} |x-\widetilde{x}| \right) \notag\\
   & \quad +b \Biggl[ C_p |\delta|^{1/pm} (e^{2.5\overline{c}T})^{1/pm} \frac{ (V(x))^{1/pm}+(V(\widetilde{x}))^{1/pm}}{2}  + C_p \dfrac{|x-y|+ |\widetilde{x}-\widetilde{y}|}{2}  \Biggr] C_p |x-\widetilde{x}|.
\end{align}
Therefore, using $\eqref{idelxy}$, $\eqref{v2}$ and Minkowski's inequality  $(\int_s^t |h(r)+g(r)|^p dr)^{1/p} \leq (\int_s^t |h(r)|^p dr)^{1/p} + (\int_s^t |g(r)|^p dr)^{1/p}$, valid for all $h, g \in L^p([s, t], \mathbb{R}),$ $p \geq 2$, we have
\begin{align*}
&\left[\mathbb{E}\left[\left|\left(X^{\iota, x}_{s,t}-X^{\delta,y}_{s,t}\right) - \left(X^{\iota, \widetilde{x}}_{s,t}-X^{\delta, \widetilde{y}}_{s,t}\right)\right|^p\right]\right]^{1/p} \notag\\
 & \leq |(x-y)-(\widetilde{x}-\widetilde{y})|   \notag\\
 &\quad + C_p   \Biggr[ \int_s^t  \Biggr[  c \left( \left[\mathbb{E}\left[\left|\left(X^{\iota, x}_{s,  \max\{s, \delta(r)\}  }-X^{\delta,y}_{s,\max \{s, \delta(r)\}}\right) - \left(X^{\iota, \widetilde{x}}_{s, \max\{s, \delta(r)\}}-X^{\delta, \widetilde{y}}_{s,\max \{s, \delta(r)\}}\right)\right|^p\right]\right]^{1/p} \right. \notag\\
  & \quad +\left. C_p  |\delta|^{1/p} |x-\widetilde{x}| \right) \notag\\
  & \quad + b \Biggl[ C_p |\delta|^{1/pm} (e^{2.5\overline{c}T})^{1/pm} \frac{ (V(x))^{1/pm}+(V(\widetilde{x}))^{1/pm}}{2}  + C_p \dfrac{|x-y|+ |\widetilde{x}-\widetilde{y}|}{2}  \Biggr] C_p |x-\widetilde{x}|    \Biggl ]^p dr \Biggl ] ^{1/p}\notag\\
& \leq |(x-y)-(\widetilde{x}-\widetilde{y})|  \notag\\
&\quad  + c C_p  \left[ \int_s^t  \mathbb{E}\left[\left|\left(X^{\iota, x}_{s,\max\{s, \delta(r)\}}-X^{\delta,y}_{s,\max \{s, \delta(r)\}}\right) - \left(X^{\iota, \widetilde{x}}_{s,\max\{s, \delta(r)\}}-X^{\delta, \widetilde{y}}_{s,\max \{s, \delta(r)\}}\right)\right|^p\right] dr\right]^{1/p} \notag\\ 
& \quad + C_p |t-s|^{1/p}  \Bigg[  C_p  |\delta |^{1/p} |x-\widetilde{x}| +b \Biggl(  C_p |\delta|^{1/pm} (e^{2.5\overline{c}T})^{1/pm} \frac{ (V(x))^{1/pm}+(V(\widetilde{x}))^{1/pm}}{2} \notag \\
& \quad  + C_p \dfrac{|x-y|+ |\widetilde{x}-\widetilde{y}|}{2}  \Biggr) C_p |x-\widetilde{x}|\Bigg]\notag\\
& \leq |(x-y)-(\widetilde{x}-\widetilde{y})|  \notag\\ 
& \quad + C_p |t-s|^{1/p} |x-\widetilde{x}| \Bigg[  |\delta |^{1/p}  +   |\delta|^{1/pm} (e^{2.5\overline{c}T})^{1/pm} \frac{ (V(x))^{1/pm}+(V(\widetilde{x}))^{1/pm}}{2}  +  \dfrac{|x-y|+ |\widetilde{x}-\widetilde{y}|}{2}    \Bigg]\notag\\
& \quad +   C_p  \left[ \int_s^t  \mathbb{E}\left[\left|\left(X^{\iota, x}_{s,\max \{s, \delta(r)\}}-X^{\delta,y}_{s,\max \{s, \delta(r)\}}\right) - \left(X^{\iota, \widetilde{x}}_{s,\max \{s, \delta(r)\}}-X^{\delta, \widetilde{y}}_{s,\max \{s, \delta(r)\}}\right)\right|^p\right] dr\right]^{1/p}.
 % &\leq |(x-y)-(\widetilde{x}-\widetilde{y})| +  C_p |t-s|^{1/p} \left[ \int_s^t  \mathbb{E}\left[\left|\left(X^{\iota, x}_{s,t}-X^{\delta,y}_{s,\max \{s, \delta(r)\}}\right) - \left(X^{\iota, \widetilde{x}}_{s,t}-X^{\delta, \widetilde{y}}_{s,\max \{s, \delta(r)\}}\right)\right|^p\right] dr\right]^{1/p} \notag\\ 
 % & \quad + C_p |t-s|^{1/p} |x-\widetilde{x}| |\delta |^{1/p} 
%   & \leq  c C_p |t-s|^{1/p} \left[ \int_s^t  \mathbb{E}\left[\left|\left(X^{\iota, x}_{s,t}-X^{\delta,y}_{s,\max \{s, \delta(r)\}}\right) - \left(X^{\iota, \widetilde{x}}_{s,t}-X^{\delta, \widetilde{y}}_{s,\max \{s, \delta(r)\}}\right)\right|^p\right] dr\right]^{1/p} \notag\\
%  &\quad + |(x-y)-(\widetilde{x}-\widetilde{y})| \notag\\
% & \quad + C_p |t-s|^{1/p}  \notag\\ 
% & \quad  \quad \times \Bigg[  c^2  C_p  |\delta|^{1/p} |x-\widetilde{x}|e^{c C_pT}  +b \Biggl[ \dfrac{ C^{\star}_{pm} |\delta|^{1/pm} (e^{2.5\overline{c}T})^{1/pm}((V(x))^{1/pm}+V(\widetilde{x})^{1/pm})}{2}\notag\\
%   & \quad \quad \quad+ \dfrac{e^{[c(t-s)^{\frac{pm-1}{pm}} + c(t-s)^{\frac{p/2-1}{p}} + c(m_p)^{1/pm}]T} c (C^{\star}_{pm})^{pm} (T)^{1/pm} |\delta|^{1/pm} (e^{2.5\overline{c}T})^{1/pm}\frac{(V(x))^{1/pm}+(V(\widetilde{x}))^{1/pm}}{2}}{2}     \notag\\
%   & \quad \quad\quad   + e^{c C^{\star}_{pm}T} \dfrac{|x-y|+ |\widetilde{x}-\widetilde{y}|}{2}  \Biggr] |x-\widetilde{x}|e^{c C_pT} \Bigg]. 
\end{align*}
Finally, applying Lemma $\ref{Cor2.1}$, we get
\begin{align*}
&\left[\mathbb{E}\left[\left|\left(X^{\iota, x}_{s,t}-X^{\delta,y}_{s,t}\right) - \left(X^{\iota, \widetilde{x}}_{s,t}-X^{\delta, \widetilde{y}}_{s,t}\right)\right|^p\right]\right]^{1/p} \notag\\
% & \leq 2^{1-\frac{1}{p}} e^{\frac{C_p^p |t-s| }{p}} \Bigg\{|(x-y)-(\widetilde{x}-\widetilde{y})| \notag\\
% & \quad + C_p |t-s|^{1/p} |x-\widetilde{x}| \Bigg[  |\delta|^{1/p}  +   |\delta|^{1/pm} (e^{2.5\overline{c}T})^{1/pm} \frac{  (V(x))^{1/pm}+(V(\widetilde{x}))^{1/pm}}{2}    +  \dfrac{|x-y|+ |\widetilde{x}-\widetilde{y}|}{2}  \Bigg] \Bigg\} \notag\\
& \leq 2^{1-\frac{1}{p}} e^{C_p |t-s|} \Bigg\{|(x-y)-(\widetilde{x}-\widetilde{y})| \notag\\
& \quad + C_p |t-s|^{1/p} |x-\widetilde{x}| \Bigg[  |\delta|^{1/p}  +   |\delta|^{1/pm} (e^{2.5\overline{c}T})^{1/pm} \frac{  (V(x))^{1/pm}+(V(\widetilde{x}))^{1/pm}}{2}    +  \dfrac{|x-y|+ |\widetilde{x}-\widetilde{y}|}{2}   \Bigg] \Bigg\},
% & \leq 2^{1-\frac{1}{p}} e^{C_p |t-s|} |(x-y)-(\widetilde{x}-\widetilde{y})|  + C_p e^{C_p |t-s|} |t-s|^{1/p} |x-\widetilde{x}| |\delta|^{1/p}  \notag\\
% & \quad + C_p e^{C_p |t-s|}  |t-s|^{1/p} |x-\widetilde{x}| |\delta|^{1/pm} (e^{2.5\overline{c}T})^{1/pm} \frac{  (V(x))^{1/pm}+(V(\widetilde{x}))^{1/pm}}{2}  \notag\\   
% & \quad + C_p e^{C_p |t-s|}  |t-s|^{1/p} |x-\widetilde{x}|
% \dfrac{|x-y|+ |\widetilde{x}-\widetilde{y}|}{2} .
% & \leq \sqrt{2} e^{2c^2 \left[\sqrt{T} + 1+ \sqrt{m_2} \right]^2T} |(x-y)-(\widetilde{x}-\widetilde{y})| \notag\\
% & \quad + 2\sqrt{2}(c^2+bc+b) \left[\sqrt{T} + 1+ \sqrt{m_2} \right]^4 e^{3c^2\left[\sqrt{T} + 1+ \sqrt{m_2} \right]^2T} e^{\overline{c}T} \dfrac{\sqrt{V(x)}+\sqrt{V(\widetilde{x})}}{2} |\delta|^{1/2} |x-\widetilde{x}| |t-s|^{1/2} \notag\\
% & \quad + 2\sqrt{2} b \left[\sqrt{T} + 1+ \sqrt{m_2} \right] e^{3c^2\left[\sqrt{T} + 1+ \sqrt{m_2} \right]^2T} \dfrac{(|x-y|+|\widetilde{x}-\widetilde{y}|)|x-\widetilde{x}|}{2} |t-s|^{1/2}.
\end{align*}
which implies the desired result.\\
% This shows $\textup{(iv)}.$\\
 $\textup{(v)}$ Without loss of generality, we suppose that $s<\tilde{s}$ and $t = \max\{s, \tilde{s}, t, \tilde{t}\}$.
% For $\delta \in \widetilde{\mathbb{S}},$ $ s, \widetilde{s}\in [0, T], t\in [s, T],$ $\widetilde{t} \in [\widetilde{s}, T] $ and $ x, \widetilde{x} \in \bR$, 
Using the triangle inequality, we have 
\begin{align}
&\left[\mathbb{E}\left[\left|\left(X^{\iota, x}_{s,t}-X^{\iota,\widetilde{x}}_{\widetilde{s},\widetilde{t}}\right) - \left(X^{\delta, x}_{s,t}-X^{\delta, \widetilde{x}}_{\widetilde{s},\widetilde{t}}\right)\right|^p\right]\right]^{1/p}\leq  R_1+R_2+R_3,
\label{R123}
\end{align}
where
\begin{align*}
&R_1:= \left[\mathbb{E}\left[\left| \left(X^{\iota, x}_{s,t}-X^{\delta, x}_{s,t}\right) - \left(X^{\iota, x}_{\widetilde{s},t}-X^{\delta, x}_{\widetilde{s},t}\right) \right| ^p\right]\right]^{1/p},\\
&R_2:= \left[\mathbb{E}\left[\left| \left(X^{\iota, x}_{\widetilde{s},t}-X^{\delta, x}_{\widetilde{s},t}\right)- \left(X^{\iota, x}_{\widetilde{s},\widetilde{t}}-X^{\delta, x}_{\widetilde{s},\widetilde{t}}\right) \right| ^p\right]\right]^{1/p},\\
&R_3:= \left[\mathbb{E}\left[\left| \left(X^{\iota, x}_{\widetilde{s},\widetilde{t}}-X^{\delta, x}_{\widetilde{s},\widetilde{t}}\right) - \left(X^{\iota, \widetilde{x}}_{\widetilde{s},\widetilde{t}}-X^{\delta, \widetilde{x}}_{\widetilde{s},\widetilde{t}}\right) \right| ^p\right]\right]^{1/p}.
\end{align*}
{\it Estimation of  $R_1$}:
We estimate $R_1$ by dividing into 3 cases as in the proof of the term $T_1$ above.

\noindent\underline{Case 1}: $\widetilde{s}$ \text{ is a grid point.}
% Let $\delta \in \widetilde{\mathbb{S}},$ $x \in \bR,$ $s\in [0, T],$ $\widetilde{s} \in \delta([0, T])\cap [s, T],$ $t\in [s, T], \widetilde{t} \in [\widetilde{s}, T]$.

Applying the Markov property, the statement of (iv) above, we have
\begin{align*}
R_1&=\left[\mathbb{E}\left[\left|\left(X^{\iota, x}_{s,t}-X^{\delta,x}_{s,t}\right) - \left(X^{\iota, x}_{\widetilde{s},t}-X^{\delta, x}_{\widetilde{s},t}\right)\right|^p\right]\right]^{1/p} \notag\\ 
 & = \left(\mathbb{E}\left[\left(\left[\mathbb{E}\left[\left|\left(X^{\iota, \mathbf{x}}_{\widetilde{s},t}-X^{\delta,\mathbf{y}}_{\widetilde{s},t}\right) - \left(X^{\iota, x}_{\widetilde{s},t}-X^{\delta, x}_{\widetilde{s},t}\right)\right|^p\right]\right] ^{1/p}\Bigg| _{\mathbf{x}= X_{s, \widetilde{s}}^{\iota, x}, \mathbf{y}= X_{s, \widetilde{s}}^{\delta, x}} \right)^p\right]\right)^{1/p} \notag\\
 &\leq \Bigg(\mathbb{E}\Bigg[ \Bigg(  2^{1-\frac{1}{p}} e^{C_p T} |(\mathbf{x} - \mathbf{y}) - (x-x)|   + C_p e^{C_p T} |t-\widetilde{s}|^{1/p} |\mathbf{x}-x| |\delta |^{1/p} \notag\\
& \quad + C_p e^{C_p T} |t-\widetilde{s}|^{1/p} |\mathbf{x}-x||\delta|^{1/pm} (e^{2.5\overline{c}T})^{1/pm} \frac{   (V(\mathbf{x}))^{1/pm}+(V(x))^{1/pm}}{2} \notag \\
& \quad   + C_p e^{C_p T} |t-\widetilde{s}|^{1/p} |\mathbf{x}-x|  \dfrac{|\mathbf{x}-\mathbf{y}|+ |x-x|}{2}    \Bigg| _{\mathbf{x}= X_{s, \widetilde{s}}^{\iota, x}, 
\mathbf{y}= X_{s, \widetilde{s}}^{\delta, x}} \Bigg)^p\Bigg]\Bigg)^{1/p} \notag\\
% & = \Bigg(\mathbb{E}\Bigg[\Bigg| \Bigg(   2^{1-\frac{1}{p}} e^{C_p T} |\mathbf{x} - \mathbf{y}|  + C_p e^{C_p T} |t-\widetilde{s}|^{1/p} |\mathbf{x}-x| |\delta |^{1/p} \notag\\ 
%  & \quad + C_p e^{C_p T} |t-\widetilde{s}|^{1/p} |\mathbf{x}-x||\delta|^{1/pm} (e^{2.5\overline{c}T})^{1/pm} \frac{   (V(\mathbf{x}))^{1/pm}+(V(x))^{1/pm}}{2} \notag \\
% & \quad  + C_p e^{C_p T} |t-\widetilde{s}|^{1/p} |\mathbf{x}-x| \dfrac{|\mathbf{x}-\mathbf{y}|}{2}  \Bigg] \Bigg) \Bigg| _{\mathbf{x}= X_{s, \widetilde{s}}^{\iota, x}, 
% \mathbf{y}= X_{s, \widetilde{s}}^{\delta, x}}\Bigg|^p\Bigg]\Bigg)^{1/p}
% \notag\\
& = \Bigg(\mathbb{E}\Bigg[\Bigg|    2^{1-\frac{1}{p}} e^{C_p T} \left|X_{s, \widetilde{s}}^{\iota, x} - X_{s, \widetilde{s}}^{\delta, x}\right|  + C_p e^{C_p T} |t-\widetilde{s}|^{1/p} \left|X_{s, \widetilde{s}}^{\iota, x}-x \right| |\delta |^{1/p} \notag\\ 
 & \quad + C_p e^{C_p T} |t-\widetilde{s}|^{1/p} |X_{s, \widetilde{s}}^{\iota, x}-x||\delta|^{1/pm} (e^{2.5\overline{c}T})^{1/pm} \frac{   (V(X_{s, \widetilde{s}}^{\iota, x}))^{1/pm}+(V(x))^{1/pm}}{2} \notag \\
& \quad  + C_p e^{C_p T} |t-\widetilde{s}|^{1/p} |X_{s, \widetilde{s}}^{\iota, x}-x| \dfrac{|X_{s, \widetilde{s}}^{\iota, x}-X_{s, \widetilde{s}}^{\delta, x}|}{2}   \Bigg|^p\Bigg]\Bigg)^{1/p}.
 \end{align*}
Using the triangle inequality and H\"older's inequality with $\frac{1}{\kappa_1}+\frac{1}{\kappa_2}=1,$
\begin{align*}
R_1 & \leq 2^{1-\frac{1}{p}} e^{C_p T}  \left[\mathbb{E}\left[\left|X^{\iota, x}_{s,\widetilde{s}}-X^{\delta,x}_{s,\widetilde{s}}\right|^p\right]\right]^{1/p}  + C_p e^{C_p T} |t-\widetilde{s}|^{1/p}   |\delta|^{1/p} \left[\mathbb{E}\left[\left|X^{\iota, x}_{s,\widetilde{s}} - x \right|^p\right]\right]^{1/p}  \notag\\
& \quad + C_p e^{C_p T} |t-\widetilde{s}|^{1/p} |\delta|^{1/pm} (e^{2.5\overline{c}T})^{1/pm} \left[\mathbb{E}\left[ \left|X^{\iota, x}_{s,\widetilde{s}} - x \right|^{p\kappa_1}\right]\right]^{1/p\kappa_1} \dfrac{   \left[\mathbb{E}\left[ \left|V(X^{\iota, x}_{s,\widetilde{s}}) \right|^{\kappa_2 /m}\right]\right]^{1/p\kappa_2}}{2} \notag\\
&\quad + C_p e^{C_p T} |t-\widetilde{s}|^{1/p} |\delta|^{1/pm} (e^{2.5\overline{c}T})^{1/pm} \left[\mathbb{E}\left[\left|X^{\iota, x}_{s,\widetilde{s}} - x \right|^{p}\right]\right]^{1/p} \dfrac{   (V(x))^{1/pm}}{2}\notag\\
  & \quad   +  C_p e^{C_p T} |t-\widetilde{s}|^{1/p}  \left[\mathbb{E}\left[\left|X^{\iota, x}_{s,\widetilde{s}} - x\right|^{p\kappa_1}\right]\right]^{1/p\kappa_1}\dfrac{\left[\mathbb{E}\left[ \left|X^{\iota, x}_{s,\widetilde{s}} - X^{\delta, x}_{s,\widetilde{s}}\right|^{p\kappa_2}\right]\right]^{1/p\kappa_2}}{2} .
  \end{align*}
%   Next, using equation $\eqref{y01}$ and triangle inequality,  for $q\in \{ p, p \kappa_1\}$, we have
% \begin{align}
% \left[\mathbb{E}\left[\left|X^{\iota, x}_{s,\widetilde{s}} - x \right|^q\right]\right]^{1/q} & \leq \left[\mathbb{E} \left[\left| \int_s^{\widetilde{s}} \mu(X_{s,\max\{s,\delta(r)\}}^{\iota,x}) dr \right|^q \right]\right]^{1/q} + \left[\mathbb{E} \left[\left| \int_s^{\widetilde{s}} \sigma(X_{s,\max\{s,\delta(r)\}}^{\iota,x}) dW_r \right|^q \right]\right]^{1/q} \notag \\
% & \quad + \left[\mathbb{E} \left[\left| \int_s^{\widetilde{s}} \int _{\bR_0}\gamma (X_{s,\max\{s,\delta(r-)\}}^{\iota,x}) z \widetilde{N}(dr, dz) \right|^q\right]\right]^{1/q}.
% \end{align}
Then, applying the aforementioned statement (i), proceeding as in the proof of $\eqref{18}$, and proceeding as in the proof (i) of Lemma $\ref{Lem3.3}$, we get
  \begin{align}
  \label{vi3}
R_1
 & \le 2^{1-\frac{1}{p}} e^{C_p T}  C_p |\widetilde{s}-s|^{1/p} |\delta|^{1/p} (e^{2.5\overline{c}T}V(x))^{1/p}  + C_p e^{C_p T} |t-\widetilde{s}|^{1/p}  |\delta|^{1/p} C_p|\widetilde{s} -s|^{1/p} (e^{2.5\overline{c}T}V(x))^{1/p}   \notag\\
& \quad  +C_p e^{C_p T} |t-\widetilde{s}|^{1/p} |\delta|^{1/pm} (e^{2.5\overline{c}T})^{1/pm} C_p |\widetilde{s} -s|^{1/p\kappa_1} (e^{2.5\overline{c}T}V(x))^{1/p\kappa_1}  \dfrac{(e^{2.5\overline{c} |\widetilde{s}-s|})^{1/p\kappa_2}((V(x))^{1/pm}}{2} \notag\\
& \quad  +C_p e^{C_p T} |t-\widetilde{s}|^{1/p} |\delta|^{1/pm} (e^{2.5\overline{c}T})^{1/pm} C_p |\widetilde{s} -s|^{1/p} (e^{2.5\overline{c}T}V(x))^{1/p}   \dfrac{(V(x))^{1/pm}}{2} \notag\\
& \quad  +C_p e^{C_p T} |t-\widetilde{s}|^{1/p}  C_p |\widetilde{s} -s|^{1/p\kappa_1} (e^{2.5\overline{c}T}V(x))^{1/p\kappa_1}  C_p |\widetilde{s} -s|^{1/p\kappa_2} |\delta|^{1/p \kappa_2} (e^{2.5\overline{c}T}V(x))^{1/p\kappa_2}  \notag\\
&\leq C_p e^{C_p T}   |\widetilde{s}-s|^{1/p} |\delta|^{1/p} (e^{2.5\overline{c}T}V(x))^{1/p}  + C_p e^{C_p T} T^{1/p}  |\delta|^{1/p} |\widetilde{s} -s|^{1/p} (e^{2.5\overline{c}T}V(x))^{1/p}   \notag\\
& \quad  + C_p e^{C_p T} T^{1/p} |\delta|^{1/pm} (e^{2.5\overline{c}T})^{1/pm}  |\widetilde{s} -s|^{1/p\kappa_1} (e^{2.5\overline{c}T}V(x))^{1/p\kappa_1} \dfrac{(e^{2.5\overline{c}T})^{1/p\kappa_2}(V(x))^{1/pm}}{2} \notag\\
& \quad  + C_p e^{C_p T} T^{1/p} |\delta|^{1/pm} (e^{2.5\overline{c}T})^{1/pm}  |\widetilde{s} -s|^{1/p} (e^{2.5\overline{c}T}V(x))^{1/p}  \dfrac{(V(x))^{1/pm}}{2} \notag\\
& \quad  +C_p e^{C_p T} T^{1/p}  |\widetilde{s} -s|^{1/p\kappa_1} (e^{2.5\overline{c}T}V(x))^{1/p\kappa_1}   |\widetilde{s} -s|^{1/p\kappa_2} |\delta|^{1/p \kappa_2} (e^{2.5\overline{c}T}V(x))^{1/p\kappa_2}  \notag\\
  & \leq  C_p e^{C_p T}  (e^{5\overline{c}T})^{1/p } |\delta|^{\frac{1}{p(m \vee \kappa_2)}}  |\widetilde{s}-s|^{1/p\kappa_1}(V(x))^{2/p}.
 \end{align}
 %without loss of generality, we suppose that $m=\max\{m, \kappa_2\}.$

\noindent\underline{Case 2}: \text{$\widetilde{s}$ is not a grid point and there is no grid point on $[s, \widetilde{s}]$}.

Let $\overline{s}$ be the smallest grid point on $[\widetilde{s}, t].$
% Let $\delta \in \widetilde{\mathbb{S}},$ $x \in \bR,$ $s\in [0, T],$ $\widetilde{s} \in [s, T],$ $t\in [\widetilde{s},T],$ $ \overline{s} \in [\widetilde{s},T],$ $\delta([0, T])\cap (s, \overline{s}) = \emptyset $, $\delta([0, T])\cap (s, \widetilde{s}) = \emptyset $ and  $\overline{s} \in \delta([0, T])\cap [\widetilde{s}, T] $. 
Thus, there is no grid point on $[s, \overline{s}).$ Applying the Markov property, the statement (iv) above, the triangle inequality and H\"older's inequality with  $\frac{1}{\kappa_1}+\frac{1}{\kappa_2}=1,$ we get
\begin{align*}
R_1&=\left(\mathbb{E}\left[\left(\left[\mathbb{E}\left[\left|\left(X^{\iota, \mathbf{x}}_{\overline{s},t}-X^{\delta,\mathbf{y}}_{\overline{s},t}\right) - \left(X^{\iota, \mathbf{\widetilde{x}}}_{\overline{s},t}-X^{\delta, \mathbf{\widetilde{y}}}_{\overline{s},t}\right)\right|^p\right]\right]^{1/p}\Bigg| _{\mathbf{x}= X_{s, \overline{s}}^{\iota, x}, \mathbf{y}= X_{s, \overline{s}}^{\delta, x}, \mathbf{\widetilde{x}}= X_{\widetilde{s}, \overline{s}}^{\iota, x}, \mathbf{\widetilde{y}}= X_{\widetilde{s}, \overline{s}}^{\delta, x}}\right)^p\right]\right)^{1/p} \notag\\
& \leq  \Bigg( \mathbb{E}\Bigg[\Bigg(  2^{1-\frac{1}{p}} e^{C_p T} |(\mathbf{x} - \mathbf{y}) - (\mathbf{\widetilde{x}}-\mathbf{\widetilde{y}})| + C_p e^{C_p T} |t-\overline{s}|^{1/p} |\mathbf{x}-\mathbf{\widetilde{x}}| |\delta|^{1/p}  \notag\\ 
& \quad + C_p e^{C_p T} |t-\overline{s}|^{1/p} |\mathbf{x}-\mathbf{\widetilde{x}}|   |\delta|^{1/pm} (e^{2.5\overline{c}T})^{1/pm} \frac{  (V(\mathbf{x}))^{1/pm}+(V(\mathbf{\widetilde{x}}))^{1/pm}}{2} \notag\\
& \quad + C_p e^{C_p T} |t-\overline{s}|^{1/p} |\mathbf{x}-\mathbf{\widetilde{x}}| \dfrac{|\mathbf{x}-\mathbf{y}|+ |\mathbf{\widetilde{x}}-\mathbf{\widetilde{y}}|}{2}   \Bigg|_{\mathbf{x}= X_{s, \overline{s}}^{\iota, x}, \mathbf{y}= X_{s, \overline{s}}^{\delta, x}, \mathbf{\widetilde{x}}= X_{\widetilde{s}, \overline{s}}^{\iota, x}, \mathbf{\widetilde{y}}= X_{\widetilde{s}, \overline{s}}^{\delta, x}}\Bigg)^p\Bigg]\Bigg)^{1/p} \notag\\
 %& \leq  \Bigg( \mathbb{E}\Bigg[\Bigg(  2^{1-\frac{1}{p}} e^{C_p T} |(\mathbf{x} - \mathbf{y}) - (\mathbf{\widetilde{x}}-\mathbf{\widetilde{y}})| + C_p e^{C_p T} |t-s|^{1/p} |\mathbf{x}-\mathbf{\widetilde{x}}| |\delta|^{1/p}  \notag\\
%& \quad + C_p e^{C_p T} |t-s|^{1/p} |\mathbf{x}-\mathbf{\widetilde{x}}|   |\delta|^{1/pm} (e^{2.5\overline{c}T})^{1/pm} \frac{  (V(\mathbf{x}))^{1/pm}+(V(\mathbf{\widetilde{x}}))^{1/pm}}{2} \notag\\
%& \quad + C_p e^{C_p T} |t-s|^{1/p} |\mathbf{x}-\mathbf{\widetilde{x}}| \dfrac{|\mathbf{x}-\mathbf{y}|+ |\mathbf{\widetilde{x}}-\mathbf{\widetilde{y}}|}{2}   \Bigg|_{\mathbf{x}= X_{s, \overline{s}}^{\iota, x}, \mathbf{y}= X_{s, \overline{s}}^{\delta, x}, \mathbf{\widetilde{x}}= X_{\widetilde{s}, \overline{s}}^{\iota, x}, \mathbf{\widetilde{y}}= X_{\widetilde{s}, \overline{s}}^{\delta, x}}\Bigg)^p\Bigg]\Bigg)^{1/p} \notag\\
 & \leq   2^{1-\frac{1}{p}} e^{C_p T} \left[\mathbb{E}\left[\left|\left(X^{\iota, x}_{s,\overline{s}}-X^{\delta,x}_{s,\overline{s}}\right) - \left(X^{\iota, x}_{\widetilde{s},\overline{s}}-X^{\delta, x}_{\widetilde{s},\overline{s}}\right)\right|^p\right]\right]^{1/p}   + C_p e^{C_p T} |t-s|^{1/p} |\delta|^{1/p} \left[\mathbb{E}\left[\left|X^{\iota, x}_{s,\overline{s}}-X^{\iota, x}_{\widetilde{s},\overline{s}}\right|^p\right]\right]^{1/p} \notag\\
& \quad + C_p e^{C_p T} |t-s|^{1/p} |\delta|^{1/pm} (e^{2.5\overline{c}T})^{1/pm} \left[\mathbb{E}\left[|X^{\iota, x}_{s,\overline{s}} - X^{\iota, x}_{\widetilde{s},\overline{s}}|^{p\kappa_1}\right]\right]^{1/p\kappa_1} \notag\\
& \qquad \times  \dfrac{\left[\mathbb{E}\left[|V(X^{\iota, x}_{s,\overline{s}})|^{\kappa_2/m}\right]\right]^{1/p\kappa_2}+\left[\mathbb{E}\left[|V(X^{\iota, x}_{\widetilde{s},\overline{s}})|^{\kappa_2/m}\right]\right]^{1/p\kappa_2}}{2}\notag\\
& \quad + C_p e^{C_p T} |t-s|^{1/p} \left[\mathbb{E}\left[|X^{\iota, x}_{s,\overline{s}} - X^{\iota, x}_{\widetilde{s},\overline{s}}|^{p\kappa_1}\right]\right]^{1/p\kappa_1} \notag\\
  & \quad \quad  \times \dfrac{\left[\mathbb{E}\left[|X^{\iota, x}_{s,\overline{s}} - X^{\delta, x}_{s,\overline{s}}|^{p\kappa_2}\right]\right]^{1/p\kappa_2}+ \left[\mathbb{E}\left[|X^{\iota, x}_{\widetilde{s},\overline{s}} - X^{\delta, x}_{\widetilde{s},\overline{s}}|^{p\kappa_2}\right]\right]^{1/p\kappa_2}}{2}   .
  \end{align*}
Next, using (ii) of Lemma $\ref{Lem3.5},$ then (i) of Lemma $\ref{Lem3.5},$ proceeding as in the
proof (i) of Lemma $\ref{Lem3.3},$ and the aforementioned statement (i), we obtain  
  \begin{align}
R_1 
  & \leq   2^{1-\frac{1}{p}} e^{C_p T}  C_p |\delta|^{1/p} |\widetilde{s} -s|^{1/p} (e^{2.5\overline{c}T}V(x))^{1/p}  + C_p e^{C_p T}   |t-s|^{1/p} |\delta|^{1/p} C_p |\widetilde{s}-s|^{1/p} (e^{2.5\overline{c}T}V(x))^{1/p}  \notag\\
& \quad + C_p e^{C_p T} |t-s|^{1/p} |\delta|^{1/pm} (e^{2.5\overline{c}T})^{1/pm}  C_p |\widetilde{s}-s|^{1/p\kappa_1} (e^{2.5\overline{c}T}V(x))^{1/p\kappa_1}    \notag\\
& \qquad \quad \times \dfrac{  (e^{2.5\overline{c}|\overline{s}-s|})^{1/p\kappa_2} (V(x))^{1/pm} +(e^{2.5\overline{c}|\overline{s}-\widetilde{s}|})^{1/p\kappa_2} (V(x))^{1/pm} }{2}\notag\\
& \quad + C_p e^{C_p T} |t-s|^{1/p}   C_p |\widetilde{s}-s|^{1/p\kappa_1} (e^{2.5\overline{c}T}V(x))^{1/p\kappa_1} \notag\\ 
  &\qquad \quad \times  \dfrac{ C_p |\overline{s}-s|^{1/p\kappa_2} |\delta|^{1/p\kappa_2} (e^{2.5\overline{c}T}V(x))^{1/p\kappa_2} + C_p|\overline{s}-\widetilde{s}|^{1/p\kappa_2} |\delta|^{1/p\kappa_2} (e^{2.5\overline{c}T}V(x))^{1/p\kappa_2}}{2}    \notag\\
  & \leq  C_p e^{C_p T}   |\delta|^{1/p} |\widetilde{s} -s|^{1/p} (e^{2.5\overline{c}T}V(x))^{1/p}  + C_p e^{C_p T}   T^{1/p} |\delta|^{1/p}  |\widetilde{s}-s|^{1/p} (e^{2.5\overline{c}T}V(x))^{1/p}  \notag\\
& \quad + C_p e^{C_p T} T^{1/p} |\delta|^{1/pm} (e^{2.5\overline{c}T})^{1/pm}   |\widetilde{s}-s|^{1/p\kappa_1} (e^{2.5\overline{c}T}V(x))^{1/p\kappa_1}    \notag\\
& \qquad \quad \times \dfrac{  (e^{2.5\overline{c}T})^{1/p\kappa_2} (V(x))^{1/pm} +(e^{2.5\overline{c}T})^{1/p\kappa_2} (V(x))^{1/pm} }{2}\notag\\
& \quad + C_p e^{C_p T} T^{1/p}  |\widetilde{s}-s|^{1/p\kappa_1} (e^{2.5\overline{c}T}V(x))^{1/p\kappa_1} \notag\\ 
  &\qquad \quad \times  \dfrac{  T^{1/p\kappa_2} |\delta|^{1/p\kappa_2} (e^{2.5\overline{c}T}V(x))^{1/p\kappa_2} +  T^{1/p\kappa_2} |\delta|^{1/p\kappa_2} (e^{2.5\overline{c}T}V(x))^{1/p\kappa_2}}{2}    \notag\\
 % & \leq C_p e^{C_p T} (e^{5\overline{c}T})^{1/p} |t-s|^{1/p}  |s-\widetilde{s}|^{1/p\kappa_1}|\delta|^{1/pm} (V(x))^{2/p} \notag\\
  & \leq C_p e^{C_p T}  (e^{5\overline{c}T})^{1/p} |\delta|^{\frac{1}{p(m \vee \kappa_2)}} |\widetilde{s}-s|^{1/p\kappa_1}  (V(x))^{2/p}.
  \label{vi2}
\end{align}
% without loss of generality, we suppose that $m=\max\{m, \kappa_2\}.$

\noindent\underline{Case 3}: \text{$\widetilde{s}$ is not a grid point and there is a grid point on $[s, \widetilde{s}]$}.

Let $\underline{s}$ be the largest grid point on $[s,\widetilde{s}].$ 
Then, using the fact that $\underline{s}$  is the grid point and proceeding as in the proof of $\eqref{vi3}$ of Case $1$, we get
 \begin{align*}  \left[\mathbb{E}\left[\left|\left(X^{\iota, x}_{s,t}-X^{\delta,x}_{s,t}\right) - \left(X^{\iota, x}_{\underline{s},t}-X^{\delta, x}_{\underline{s},t}\right)\right|^p\right]\right]^{1/p} &\leq C_p e^{C_p T} (e^{5\overline{c}T})^{1/p} |\delta|^{\frac{1}{p(m \vee \kappa_2)}}  |\underline{s}-s|^{1/p\kappa_1}(V(x))^{2/p}\notag\\
 &\leq C_p e^{C_p T} (e^{5\overline{c}T})^{1/p} |\delta|^{\frac{1}{p(m \vee \kappa_2)}}  |\widetilde{s} - s|^{1/p\kappa_1}(V(x))^{2/p}.
\end{align*}
Next, using the fact that there is no grid point on  $(\underline{s}, \widetilde{s}]$ and proceeding as in the proof of $\eqref{vi2}$ of Case 2, we get
\begin{align*}
\left[\mathbb{E}\left[\left|\left(X^{\iota, x}_{\underline{s},t}-X^{\delta,x}_{\underline{s},t}\right) - \left(X^{\iota, x}_{\widetilde{s},t}-X^{\delta, x}_{\widetilde{s},t}\right)\right|^p\right]\right]^{1/p} & \leq  C_p e^{C_p T}(e^{5\overline{c}T})^{1/p} |\delta|^{\frac{1}{p(m \vee \kappa_2)}}  |\widetilde{s} -\underline{s}|^{1/p\kappa_1}  (V(x))^{2/p}\notag\\
& \leq  C_p e^{C_p T}(e^{5\overline{c}T})^{1/p} |\delta|^{\frac{1}{p(m \vee \kappa_2)}}  |\widetilde{s} -s|^{1/p\kappa_1}  (V(x))^{2/p}.
\end{align*}
Therefore, combining all the computations above yields
\begin{align*}
R_1&=\left[\mathbb{E}\left[\left|\left(X^{\iota, x}_{s,t}-X^{\delta,x}_{s,t}\right) - \left(X^{\iota, x}_{\widetilde{s},t}-X^{\delta, x}_{\widetilde{s},t}\right)\right|^p\right]\right]^{1/p} \notag\\ 
 & \leq \left[\mathbb{E}\left[\left|\left(X^{\iota, x}_{s,t}-X^{\delta,x}_{s,t}\right) - \left(X^{\iota, x}_{\underline{s},t}-X^{\delta, x}_{\underline{s},t}\right)\right|^p\right]\right]^{1/p} + \left[\mathbb{E}\left[\left|\left(X^{\iota, x}_{\underline{s},t}-X^{\delta,x}_{\underline{s},t}\right) - \left(X^{\iota, x}_{\widetilde{s},t}-X^{\delta, x}_{\widetilde{s},t}\right)\right|^p\right]\right]^{1/p} \notag\\ 
& \leq C_p e^{C_p T}  (e^{5\overline{c}T})^{1/p} |\delta|^{\frac{1}{p(m \vee \kappa_2)}} |\widetilde{s}-s|^{1/p\kappa_1}  (V(x))^{2/p}  . 
\end{align*}
% Since $\underline{s}$ is the grid point and there is no grid point on $(\underline{s},\widetilde{s}]$,
% applying the triangle inequality, $\eqref{vi3}$ and $\eqref{vi2}$ we get
% \begin{align*}
% R_1&=\left[\mathbb{E}\left[\left|\left(X^{\iota, x}_{s,t}-X^{\delta,x}_{s,t}\right) - \left(X^{\iota, x}_{\widetilde{s},t}-X^{\delta, x}_{\widetilde{s},t}\right)\right|^p\right]\right]^{1/p} \notag\\ 
%     & \leq \left[\mathbb{E}\left[\left|\left(X^{\iota, x}_{s,t}-X^{\delta,x}_{s,t}\right) - \left(X^{\iota, x}_{\underline{s},t}-X^{\delta, x}_{\underline{s},t}\right)\right|^p\right]\right]^{1/p} + \left[\mathbb{E}\left[\left|\left(X^{\iota, x}_{\underline{s},t}-X^{\delta,x}_{\underline{s},t}\right) - \left(X^{\iota, x}_{\widetilde{s},t}-X^{\delta, x}_{\widetilde{s},t}\right)\right|^p\right]\right]^{1/p} \notag\\ 
%     & \leq  C_p e^{C_p T} (e^{5\overline{c}T})^{1/p} |\delta|^{1/pm}  |s-\underline{s}|^{1/p\kappa_1}(V(x))^{2/p}\notag \\
%     & \quad + C_p e^{C_p T}  |s-\widetilde{s}|^{1/p\kappa_1} |\delta|^{1/pm} (V(x))^{2/p}\notag\\
%     & \leq C_p e^{C_p T}  (e^{5\overline{c}T})^{1/p} |s-\widetilde{s}|^{1/p\kappa_1} |\delta|^{1/pm} (V(x))^{2/p}  . 
% \end{align*}
Consequently, we obtain in all cases
 \begin{align} R_1 \leq C_p e^{C_p T}  (e^{5\overline{c}T})^{1/p} |\delta|^{\frac{1}{p(m \vee \kappa_2)}} |\widetilde{s}-s|^{1/p\kappa_1}  (V(x))^{2/p}.
 \label{R1}
 \end{align} 
{\it Estimation of  $R_2$}: 
% We consider $$\left[\mathbb{E}\left[\left|\left(X^{\iota, x}_{s,\max\{s, t\}}-X^{\delta, x}_{s,\max\{s, t\}}\right)- \left(X^{\iota, x}_{s,\max\{s,\widetilde{s}, t,\widetilde{t}\}}-X^{\delta, x}_{s,\max\{s,\widetilde{s}, t, \widetilde{t}\}}\right) \right| ^p\right]\right]^{1/p}.$$ 
Using equation $\eqref{y01}$, the triangle inequality, Lemma $\ref{Lem3.2}$, (ii) of Lemma $\ref{Lem3.3}$, and the statement (i) above,
%with $\widetilde{s} <t,$ $ \widetilde{s} < \widetilde{t}$
 we get
\begin{align}
R_2& = \left[\mathbb{E}\left[\left|\left(X^{\iota, x}_{\widetilde{s},t}-X^{\delta,x}_{\widetilde{s},t}\right) - \left(X^{\iota, x}_{\widetilde{s},\widetilde{t}}-X^{\delta, x}_{\widetilde{s},\widetilde{t}}\right)\right|^p\right]\right]^{1/p} \notag\\
& = \left[\mathbb{E}\left[ \left|\int^t_{\widetilde{t}}  \left( \mu(X_{\widetilde{s}, r}^{\iota, x})-\mu(X_{\widetilde{s}, \max \{\widetilde{s}, \delta(r) \}}^{\delta, x}) \right) dr + \int^t_{\widetilde{t}} \left( \sigma(X_{\widetilde{s}, r}^{\iota, x}) -\sigma(X_{\widetilde{s}, \max \{\widetilde{s}, \delta(r) \}}^{\delta, x})\right) dW_r \right. \right. \right. \notag\\
& \quad \quad \left. \left. \left. + \int^t_{\widetilde{t}} \int_{\bR_0} \left(\gamma(X_{\widetilde{s}, r-}^{\iota, x})-\gamma(X_{\widetilde{s}, \max \{\widetilde{s}, \delta(r-) \}}^{\delta, x}) \right) z \widetilde{N}(dr, dz) \right|^p \right]\right]^{1/p} \notag\\
& \leq \left[\mathbb{E}\left[ \left|\int^t_{\widetilde{t}}  \left( \mu(X_{\widetilde{s}, r}^{\iota, x})-\mu(X_{\widetilde{s}, \max \{\widetilde{s}, \delta(r) \}}^{\delta, x}) \right) dr\right|^p \right]\right]^{1/p} + \left[\mathbb{E}\left[ \left|\int^t_{\widetilde{t}} \left( \sigma(X_{\widetilde{s}, r}^{\iota, x}) -\sigma(X_{\widetilde{s}, \max \{\widetilde{s}, \delta(r) \}}^{\delta, x})\right) dW_r \right|^p \right]\right]^{1/p} \notag\\
& \quad \quad  + \left[\mathbb{E}\left[ \left|\int^t_{\widetilde{t}} \int_{\bR_0} \left(\gamma(X_{\widetilde{s}, r-}^{\iota, x})-\gamma(X_{\widetilde{s}, \max \{\widetilde{s}, \delta(r-) \}}^{\delta, x}) \right) z \widetilde{N}(dr, dz) \right|^p \right]\right]^{1/p} \notag\\
&\leq c C_p |t-\widetilde{t}|^{1/p} \sup_{r\in [ \widetilde{t}, t]} \left[ \mathbb{E} \left[\left| X_{\widetilde{s}, r}^{\iota, x}-X_{\widetilde{s}, \max \{\widetilde{s}, \delta(r) \}}^{\delta, x} \right|^p \right]\right]^{1/p} \notag\\
&\leq c C_p |t-\widetilde{t}|^{1/p} \sup_{r\in [ \widetilde{t}, t]} \Bigg[ \left[ \mathbb{E} \left[\left| X_{\widetilde{s}, r}^{\iota, x}-X_{\widetilde{s}, \max \{\widetilde{s}, \delta(r) \}}^{\iota, x} \right|^p \right]\right]^{1/p} + \left[ \mathbb{E} \left[\left| X_{\widetilde{s}, \max \{\widetilde{s}, \delta(r) \}}^{\iota, x}-X_{\widetilde{s}, \max \{\widetilde{s}, \delta(r) \}}^{\delta, x} \right|^p \right]\right]^{1/p} \Bigg] \notag\\
&\leq  c C_p |t-\widetilde{t}|^{1/p} \sup_{r\in [ \widetilde{t}, t]} \Bigg[C_p | \max \{\widetilde{s}, \delta(r) \} -r |^{1/p} (e^{2.5\overline{c}T} V(x))^{1/p} \notag\\ 
& \quad +  C_p  |\max \{\widetilde{s}, \delta(r) \} -\widetilde{s} |^{1/p} |\delta|^{1/p}(e^{2.5\overline{c}T}V(x))^{1/p} \Bigg] \notag\\
&\leq  c C_p |t-\widetilde{t}|^{1/p} \Bigg[C_p |\delta|^{1/p} (e^{2.5\overline{c}T} V(x))^{1/p}  +  C_p T^{1/p}  |\delta|^{1/p} (e^{2.5\overline{c}T}V(x))^{1/p} \Bigg] \notag\\
& \leq C_p |t-\widetilde{t}|^{1/p} |\delta|^{1/p} (e^{2.5\overline{c}T}V(x))^{1/p}. 
%& \leq [\sqrt{2} c^2+c] \left[\sqrt{T} + 2+ \sqrt{m_2} \right]^4 e^{c^2 \left[\sqrt{T} + 2+ \sqrt{m_2} \right]^2T} e^{\overline{c}T} \sqrt{V(x)} |\delta|^{1/2} |t-\widetilde{t}|^{1/2}. 
\label{R2}
\end{align}
% This implies that
% \begin{align}
% R_1 &\le   C_p |\max\{s,\widetilde{s}, t, \widetilde{t}\} - \max\{s, t\}|^{1/p} |\delta|^{1/p} (e^{2.5\overline{c}T}V(x))^{1/p}, \\
%   R_3 &\le C_p |\max\{s,\widetilde{s}, t, \widetilde{t}\} - \max\{\widetilde{s}, \widetilde{t}\}|^{1/p} |\delta|^{1/p} (e^{2.5\overline{c}T}V(x))^{1/p}.
% \end{align}
{\it Estimation of $R_3$}: Applying the aforementioned statement (iv), we have
\begin{align}
R_3 & = \left[\mathbb{E}\left[\left| \left(X^{\iota, x}_{\widetilde{s},\widetilde{t}}-X^{\delta, x}_{\widetilde{s},\widetilde{t}}\right) - \left(X^{i, \widetilde{x}}_{\widetilde{s},\widetilde{t}}-X^{\delta, \widetilde{x}}_{\widetilde{s},\widetilde{t}}\right) \right| ^p\right]\right]^{1/p} \notag\\
& \leq C_p e^{C_p T} |\widetilde{t} -\widetilde{s}|^{1/p}|x-\widetilde{x}| |\delta|^{1/p} \notag\\
& \quad + C_p e^{C_p T} |\widetilde{t}-\widetilde{s}|^{1/p}|x-\widetilde{x}|  |\delta|^{1/pm} (e^{2.5 \overline{c}T})^{1/pm}         \frac{(V(x))^{1/pm}+(V(\widetilde{x}))^{1/pm}}{2}. 
\label{R3}
\end{align}
Therefore, combining $\eqref{R123},$ $\eqref{R1}$, $\eqref{R2}$ and $\eqref{R3}$, we obtain  
\begin{align*}
&\left[\mathbb{E}\left[\left|\left(X^{\iota, x}_{s,t}-X^{i,\widetilde{x}}_{\widetilde{s},\widetilde{t}}\right) - \left(X^{\delta, x}_{s,t}-X^{\delta, \widetilde{x}}_{\widetilde{s},\widetilde{t}}\right)\right|^p\right]\right]^{1/p} \notag\\
& \leq C_p e^{C_p T}  (e^{5\overline{c}T})^{1/p}|\delta|^{\frac{1}{p(m \vee \kappa_2)}} |\widetilde{s}-s |^{1/p\kappa_1}  (V(x))^{2/p} + C_p |t-\widetilde{t}|^{1/p} |\delta|^{1/p} (e^{2.5\overline{c}T}V(x))^{1/p} \notag\\
&\quad + C_p e^{C_p T} |\widetilde{t} -\widetilde{s}|^{1/p}|x-\widetilde{x}| |\delta|^{1/p}  \notag\\
& \quad + C_p e^{C_p T} |\widetilde{t}-\widetilde{s}|^{1/p}|x-\widetilde{x}|  |\delta|^{1/pm} (e^{2.5 \overline{c}T})^{1/pm}         \frac{(V(x))^{1/pm}+(V(\widetilde{x}))^{1/pm}}{2} \notag\\
 & \leq C_p e^{C_p T}   (e^{5 \overline{c}T})^{1/p}         \frac{(V(x))^{2/p}+(V(\widetilde{x}))^{2/p}}{2} \left[  |\widetilde{s}-s|^{1/p\kappa_1} + |t-\widetilde{t}|^{1/p} +|x-\widetilde{x}| \right]   |\delta|^{\frac{1}{p(m \vee \kappa_2)}}. 
\end{align*}
The proof of Theorem $\ref{Thm}$ is thus completed.
\end{proof}
Applying Theorem \ref{Thm} for the uniform grids, we have the following result. 
\begin{Cor}
 \label{Thm1.1}
    Let $p \ge 2,$ 
    %$m, \kappa_1\ge 1,$ 
    $T\in (0, \infty),$ $\mu, \sigma, \gamma  \in C^2(\mathbb{R}, \mathbb{R})$, $(\Om, \mf, \pr, (\mathcal{F})_{t\in [0, T]})$ be a filtered probability space which satisfies the usual conditions. %Let $W= (W_t)_{t\in [0,T]}: [0, T] \times \Om \to \mathbb{R}$ be a standard $(\mathcal{F})_{t\in [0,T]}$-Brownian motion with continuous sample paths, and 
    For every $n \in \mathbb{N}, x\in \mathbb{R}$, let $Y^{n,x} = \left(Y^{n, x}_t \right)_{t\in [0, T]}: [0, T] \times \Om \to \mathbb{R}$ satisfy for all $k \in \{0, \dots, n-1\},$ $t\in \left( \frac{kT}{n}, \frac{(k+1)T}{n} \right]$ that $Y_0^{n, x}=x$ and 
    $$Y_{t}^{n, x} = Y_{\frac{kT}{n}}^{n, x}+\mu \left(Y_\frac{kT}{n}^{n, x}\right) \left (t- \frac{kT}{n} \right) +\sigma \left(Y_\frac{kT}{n}^{n, x}\right) \left(W_{t}- W_{\frac{kT}{n}}\right) +  \gamma (Y_\frac{kT}{n}^{n, x})\left(Z_{t}- Z_{\frac{kT}{n}}\right).$$ 
    Then for 
    every $x\in \mathbb{R}$, there exists an uniqueness adapted stochastic process with c\`adl\`ag sample paths $(X_t^x)_{t\in [0, T]}: [0, T]\times \Om \to \mathbb{R}$ such that for all $t\in [0, T]$, it holds a.s that $$X_t^x=x+\int^t_0\mu(X^x_s)ds+\int^t_0\sigma(X^x_s)dW_s + \int^t_0 \int_{\bR_0} \gamma (X^x_{s-})z \widetilde{N}(d s, d z),$$ and 
    for any $p \geq 2$ and $m > 1$, there exists a constant $C> 0$ such that for all $n\in \mathbb{N},$  $x \in \mathbb{R},$ $y\in \mathbb{R} \setminus \{x\}$, $t\in [0, T]$, it holds that 
        \begin{align*}
        &  \dfrac{\left(\mathbb{E}\left[\left|X^x_{t}-X^{y}_{t}\right|^p\right]\right)^{\frac{1}{p}}}{|x-y| n^{\frac{1}{mp}}} 
        + \frac{\left(\mathbb{E}\left[\left|Y^{n,x}_{t}-Y^{n,y}_{t}\right|^p\right]\right)^{\frac{1}{p}}}{|x-y| n^{\frac{1}{mp}}}\\
&+\dfrac{\left(\mathbb{E}\left[\left|X^x_{t}-Y^{n, x}_t\right|^p\right]\right)^{\frac{1}{p}}}{1+|x|}+\dfrac{\left(\mathbb{E}\left[\left|\left(X^x_{t}-Y^{n, x}_t\right)-\left(X^y_{t}-Y^{n, y}_t\right)\right|^p\right]\right)^{\frac{1}{p}}}{|x-y|(1+|x|+|y|)} \leq \dfrac{C}{n^{\frac{1}{mp}}}.
        \end{align*}
\end{Cor}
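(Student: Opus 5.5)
Existence and uniqueness of a c\`adl\`ag adapted solution $X^x$ will come from the global Lipschitz continuity of $\mu,\sigma,\gamma$ (Remark \ref{Rem1}) together with $m_2<\infty$, by the standard Picard iteration for L\'evy-driven SDEs. In the notation of Theorem \ref{Thm}, $X^x_t=X^{\iota,x}_{0,t}$. The scheme $Y^{n,x}$ is exactly $X^{\delta_n,x}_{0,t}$ for the uniform grid $\delta_n\in\mathbb{S}$ with nodes $kT/n$, whose mesh is $|\delta_n|=T/n$. The plan is therefore to specialize Theorem \ref{Thm} and Lemma \ref{Lem3.4} to $s=0$, $\delta=\delta_n$, and then convert the factors $V(x)^{1/p}$ into powers of $1+|x|$.

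\textbf{Key elementary fact.} From the definition of $V$ we have
\[
V(x)^{1/p}\le 2\bigl[1+(|\mu(0)|+|\sigma(0)|+|\gamma(0)|\sqrt{m_2})+c(1+\sqrt{m_2})|x|\bigr]\le C(1+|x|).
\]
Hence $V(x)^{1/(pm)}\le V(x)^{1/p}\le C(1+|x|)$, because $V\ge1$.

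\textbf{The four terms.}
\begin{enumerate}
\item First term: Lemma \ref{Lem3.4} with $\delta=\iota$ gives $\|X^x_t-X^y_t\|_p\le C_p|x-y|$. Since $n^{-1/(mp)}\cdot n^{1/(mp)}=1$, the quotient is at most $C_p\le C\,n^{-1/(mp)}\,n^{1/(mp)}$. Read literally, this term divided by $|x-y|n^{1/(mp)}$ is bounded by $C_p n^{-1/(mp)}$, which is exactly the required form.
\item Second term: the same bound, using Lemma \ref{Lem3.4} with $\delta=\delta_n$.
\item Third term: Theorem \ref{Thm}(i) with $s=0$ gives the bound $C_p T^{1/p}(T/n)^{1/p}(e^{2.5\bar cT})^{1/p}C(1+|x|)$. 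Since $n^{-1/p}\le n^{-1/(mp)}$, dividing by $1+|x|$ yields $C n^{-1/(mp)}$.
\item Fourth term: Theorem \ref{Thm}(iv) with $y=x$, $\widetilde x=y$, $\widetilde y=y$. The terms $|(x-y)-(\widetilde x-\widetilde y)|$ and $\frac{|x-y|+|\widetilde x-\widetilde y|}{2}$ vanish, leaving
\[
C e^{CT}T^{1/p}|x-y|\Bigl[(T/n)^{1/p}+(T/n)^{1/(pm)}(e^{2.5\bar cT})^{1/(pm)}\tfrac{V(x)^{1/(pm)}+V(y)^{1/(pm)}}{2}\Bigr].
\]
Using $V(\cdot)^{1/(pm)}\le C(1+|\cdot|)$ and $1\le 1+|x|+|y|$, this is at most $C|x-y|(1+|x|+|y|)n^{-1/(mp)}$.
\end{enumerate}
Summing the four bounds and taking $C$ to be the maximum of the constants, which depend only on $p,m,T,c,b,\bar c,m_2,m_p$ and the coefficients at $0$, gives the claim.

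\textbf{Main obstacle.} The substantive step is confirming that the well-posedness hypotheses hold. We need the identification $Y^{n,x}=X^{\delta_n,x}_{0,\cdot}$, and that $X^{\iota,x}$ is a genuine solution of \eqref{y01}, so that the theorem and lemmas apply. We also need the moments $m_p<\infty$ that the constants of Lemma \ref{BDGjump} require; these are implicitly assumed throughout Section \ref{mainresult}. Existence and uniqueness I would prove by the usual contraction in $L^2(\Omega;D[0,T])$, using Lemma \ref{BDGjump} with $p=2$ and Lemma \ref{Cor2.1}; this is standard but is the only ingredient not already packaged in Theorem \ref{Thm}.
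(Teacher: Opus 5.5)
Your proposal is correct and follows the paper's route: the paper obtains the corollary by specializing Lemma \ref{Lem3.4} and Theorem \ref{Thm}(i),(iv) to $s=0$ and the uniform grid with $|\delta|=T/n$. Your bound $V(x)^{1/(pm)}\le V(x)^{1/p}\le C(1+|x|)$ and the substitution $(x,y,\widetilde x,\widetilde y)\mapsto(x,x,y,y)$ in (iv) are exactly the bookkeeping the paper leaves implicit. One refinement to your moment caveat: part (iv) applies the estimates with exponents $pm$ and $pm/(m-1)$ (via H\"older), so the constant $C$ actually needs $m_{pm}$ and $m_{pm/(m-1)}$ to be finite, not just $m_p$.
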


\subsection*{Acknowledgements} 
The authors would like to thank Professor Arturo Kohatsu-Higa for fruitful discussions on the subject.

\end{document}